\newtheorem{theorem}{Theorem}[section]
\newtheorem{lemma}[theorem]{Lemma}
\newtheorem{corollary}[theorem]{Corollary}
\newtheorem{conjecture}[theorem]{Conjecture}
\theoremstyle{definition}
\newtheorem{definition}[theorem]{Definition}
\newtheorem{notation}[theorem]{Notation}
\newtheorem{example}[theorem]{Example}
\newtheorem{question}[theorem]{Question}
\newtheorem{remark}[theorem]{Remark}
\numberwithin{equation}{section}
\newcommand{\Real}{{\mathbb R}}
\newcommand{\Rational}{{\mathbb Q}}
\newcommand{\Complex}{{\mathbb C}}
\newcommand{\Integral}{{\mathbb Z}}
\title[{
Deforming abelian elliptic $\mathrm{SL}(2,\mathbb{R})$--representations of knot groups}]{
Deforming abelian elliptic\\ $\mathrm{SL}(2,\mathbb{R})$--representations of knot groups}
\author[Yi Liu]{%
        Yi Liu} 
\address{%
        Beijing International Center for Mathematical Research, Peking University\\
				Beijing 100871, China P.R.} 
\email{% 
    liuyi@bicmr.pku.edu.cn}
\thanks{Partially supported by NSFC Grant 12525101, 
and National Key R\&D Program of China 2020YFA0712800}
\subjclass[2020]{Primary 57K31,57K14; Secondary 12D10}
\keywords{character variety, Alexander polynomial, L-space knot}
\date{% 
 \today} 
\begin{document}

\begin{abstract}
The following criterion is proved in this paper.
If the Alexander polynomial of a knot $K\subset S^3$ has a zero of odd order on the complex unit circle,
then there exists a continuous family of irreducible representations 
$\pi_1(S^3\setminus K)\to \mathrm{SL}(2,\mathbb{R})$ 
converging to an abelian representation of noncentral elliptic type.
As an application,
the author shows that the Alexander polynomial of any nontrivial L-space knot
satisfies the condition of the criterion.
In particular,
it follows that the fundamental group of any nontrivial L-space knot complement
admits an irreducible $\mathrm{SL}(2,\mathbb{R})$--representation.
\end{abstract}

\maketitle

\section{Introduction}
In this paper, 
we are concerned about $\mathrm{SL}(2,\Real)$--representations of knot groups.
For any knot $K\subset S^3$, 
we provide a criterion for the existence of 
a continuous family of irreducible representations $\pi_1(S^3\setminus K)\to\mathrm{SL}(2,\Real)$,
which deforms an abelian representation of noncentral elliptic type.
The criterion is formulated in terms of the Alexander polynomial of the knot.
We show that our criterion applies to every nontrivial L-space knot,
yielding an irreducible $\mathrm{SL}(2,\Real)$--representation of its knot group.

%Throughout this paper, we adopt the Conway normalization for the Alexander polynomial of a classical knot.
%Namely, for any (unoriented) knot $K\subset S^3$,
%the Alexander polynomial $\Delta_K\in\Integral[t,t^{-1}]$ is 
%a Laurent polynomial with the properties $\Delta_K(t)=\Delta_K(t^{-1})$ and $\Delta_K(1)=1$.
%Normalized this way, $\Delta_K$ depends only on the homeomorphism type of the knot complement $S^3\setminus K$.

We first state our main criterion, as follows.
An $\mathrm{SL}(2,\Real)$--representation of a finitely generated group is said 
to be \emph{abelian of noncentral elliptic type}
if it is conjugate in $\mathrm{SL}(2,\Real)$ to a representation 
with image contained in the subgroup $\mathrm{SO}(2)$,
but not contained in the center $\{\pm \mathbf{1}\}$.

\begin{theorem}\label{main_deform}
	For any knot $K\subset S^3$,
	if the Alexander polynomial $\Delta_K$ of $K$ has a zero of odd order on the complex unit circle,
	then $\pi_1(S^3\setminus K)$ admits 
	a continuous family of irreducible $\mathrm{SL}(2,\Real)$--representations
	which converges to 
	an abelian $\mathrm{SL}(2,\Real)$--representation of noncentral elliptic type.
	
	In particular, the family lifts to 
	a continuous, convergent family of nonabelian $\widetilde{\mathrm{SL}}(2,\Real)$--representations.
\end{theorem}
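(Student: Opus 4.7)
Let $\omega\in S^1\subset\Complex$ be a zero of odd order of $\Delta_K$, and pick $\alpha\in(0,\pi)$ with $e^{2i\alpha}=\omega$; such $\alpha$ exists because $\Delta_K(1)=\pm1\ne0$, which rules out $\omega=1$. Define the abelian representation $\rho_0\colon\pi_1(S^3\setminus K)\to\mathrm{SO}(2)\subset\mathrm{SL}(2,\Real)$ by sending a meridian $\mu$ to the rotation $R(\alpha)$ by angle $\alpha$; this is abelian of noncentral elliptic type. The plan is to produce a real-analytic path $\{\rho_t\}_{t\in(-\varepsilon,\varepsilon)}$ of $\mathrm{SL}(2,\Real)$-representations with $\rho_0$ as its limit and $\rho_t$ irreducible for $t\ne0$, then to lift this path to $\widetilde{\mathrm{SL}}(2,\Real)$. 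The approach has three moving parts: compute the twisted cohomology at $\rho_0$, integrate a prescribed infinitesimal deformation to an actual family using the odd-order hypothesis, and pass to the universal cover.

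\textbf{Twisted cohomology.} Infinitesimal deformations of $\rho_0$ are classified by $H^1(\pi_1(S^3\setminus K);\mathfrak{sl}(2,\Real)_{\mathrm{Ad}\,\rho_0})$. Complexifying, the adjoint action of $R(\alpha)$ on $\mathfrak{sl}(2,\Complex)$ has eigenvalues $1,\omega,\omega^{-1}$, so this module splits as a trivial line (which captures only deformations to nearby abelian representations) plus the two one-dimensional characters $\Complex_\omega$ and $\Complex_{\omega^{-1}}$ of the abelianization $H_1(S^3\setminus K;\Integral)\cong\Integral$. A standard Fox-calculus / Reidemeister-torsion computation on the knot exterior identifies $\dim_\Complex H^1(\pi_1;\Complex_\omega)$ with the order of $\omega$ as a root of $\Delta_K$, so under the present hypothesis this dimension is at least one. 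Galois descent via the complex conjugation that swaps $\Complex_\omega$ and $\Complex_{\omega^{-1}}$ yields a nonzero nonabelian class $u\in H^1(\pi_1;\mathfrak{sl}(2,\Real)_{\mathrm{Ad}\,\rho_0})$ whose complexification is transverse to the abelian part.

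\textbf{Integrating the deformation --- the main obstacle.} Promoting $u$ from a formal first-order vector to an honest real-analytic family is the heart of the argument. Obstructions to extending order-by-order live in $H^2(\pi_1;\mathfrak{sl}(2,\Real)_{\mathrm{Ad}\,\rho_0})$, which by Poincar\'e--Lefschetz duality for the knot exterior is naturally paired with $H^1$. The \emph{odd-order} hypothesis enters precisely here: the relevant cyclic summand of the Alexander module through $\omega$ has odd length over the appropriate cyclotomic quotient, so the symmetric bilinear obstruction (given by a cup product of $u$ with itself) is forced to vanish by a parity / sign argument. This is the $\mathrm{SL}(2,\Real)$ analogue of the $\mathrm{SU}(2)$ technique of Heusener--Porti and successors, adapted to the real split form by exploiting the fact that the eigenvalues $\omega^{\pm1}$ are interchanged by complex conjugation rather than fixed separately. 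Once formal integrability is established at every order, Artin approximation (or the real-analytic implicit function theorem applied to the slice of the representation scheme cut out at $\rho_0$) delivers a convergent real-analytic family $\rho_t$ with initial velocity $u$. Irreducibility for $t\ne0$ is automatic because $u$ lies outside the tangent cone to the abelian stratum.

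\textbf{Lifting to $\widetilde{\mathrm{SL}}(2,\Real)$.} Because $\rho_0$ factors through $\mathrm{SO}(2)$, any preimage of $R(\alpha)$ in the connected subgroup $\widetilde{\mathrm{SO}}(2)\subset\widetilde{\mathrm{SL}}(2,\Real)$ defines a homomorphism on the infinite cyclic group $H_1(S^3\setminus K;\Integral)$, which yields a lift $\widetilde\rho_0$ since $\pi_1(S^3\setminus K)$ admits a presentation in which the meridian alone determines the abelianization. The Euler class of a representation into $\mathrm{SL}(2,\Real)$ is a $\Integral$-valued invariant on the representation variety that is locally constant, and the obstruction to lifting a path of representations to $\widetilde{\mathrm{SL}}(2,\Real)$ vanishes as soon as the initial endpoint is lifted; hence $\widetilde\rho_0$ extends uniquely along $\rho_t$ for $|t|$ small, producing the required continuous family $\widetilde\rho_t$. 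Nonabelianness of each $\widetilde\rho_t$ with $t\ne0$ is inherited from the nonabelianness of $\rho_t$ via the covering projection, completing the plan.
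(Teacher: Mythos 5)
There is a genuine gap at the heart of your argument, in the step ``Integrating the deformation.'' The assertion that the odd-order hypothesis forces the quadratic (cup-product) obstruction in $H^2$ to vanish ``by a parity / sign argument,'' and that formal integrability then holds at every order, is not substantiated and is not a known result. The deformation-theoretic route you sketch is exactly the Frohman--Klassen / Heusener--Porti--Su\'ares-Peir\'o method, and it is known to work only when $\omega$ is a \emph{simple} zero of $\Delta_K$: in that case the local structure of the representation variety at $\rho_0$ can be completely controlled (the relevant $H^1$ is one-dimensional and the quadratic cone is analyzed explicitly). For an odd-order zero of higher multiplicity no such cohomological control is available; this is precisely why Heusener--Kroll and Herald had to abandon obstruction theory and prove the $\mathrm{SU}(2)$ analogue by a global intersection-number (Casson--Lin type) argument keyed to the jump of the Levine--Tristram signature across the zero, and why the paper develops a local modulo-$2$ intersection count for perturbed trace slices, together with $(\nu,J)$-equivariant transversality, to transfer that signature-jump information from $\mathrm{SU}(2)$ to $\mathrm{SL}(2,\Real)$ inside $\mathrm{SL}(2,\Complex)$. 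Your proposal never uses the signature jump, which is the only known mechanism by which ``odd order'' produces irreducible deformations beyond the simple-zero case.

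Two further points are also problematic. First, the identification $\dim_\Complex H^1(\pi_1;\Complex_\omega)=\mathrm{ord}_\omega(\Delta_K)$ is incorrect in general: that dimension equals the number of elementary divisors of the Alexander module vanishing at $\omega$, not the order of the zero (this does not hurt you for getting $H^1\neq0$, but it signals that the cohomological bookkeeping on which your obstruction claim rests is not in place). Second, irreducibility of the integrated family is not ``automatic'': by Burde and de Rham, whenever $\Delta_K(\omega)=0$ the abelian representation $\rho_0$ is also a limit of \emph{reducible nonabelian} representations, so a nonabelian first-order direction by itself does not preclude the integrated path from staying in the reducible locus; ruling this out is part of what the intersection-theoretic argument accomplishes. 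Your final lifting step to $\widetilde{\mathrm{SL}}(2,\Real)$ (lift the elliptic abelian endpoint through its factorization over $\Integral$, then use path lifting for the covering) is fine and agrees with the paper, but the core existence statement it is meant to lift has not been established by your argument.
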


Theorem \ref{main_deform} may be viewed as an analogue 
of a similar criterion for the existence of irreducible $\mathrm{SU}(2)$--representations,
due to Heusener and Kroll \cite[Theorem 1.1]{Heusener--Kroll_abelian}, 
and due to Herald \cite[Corollary 2]{Herald_rep}.
Theorem \ref{main_deform} generalizes two known results, as follows.
If $\Delta_K$ has a simple zero on the complex unit circle,
the same conclusion as in Theorem \ref{main_deform} 
is proved by Frohman and Klassen \cite[Theorem 1.1]{Frohman--Klassen};
see also 
Heusener, Porti, and Su\'ares-Peir\'o \cite[Corollary 1.4]{HPS_reducible} for a generalization,
and see Herald and Zhang \cite[Theorem 1]{Herald--Zhang} for a stronger conclusion.
Under the extra assumption that $K\subset S^3$ is a small knot
(such as any two-bridge knot), the special case of Theorem \ref{main_deform} is 
proved recently by Dunfield and Rasmussen \cite[Corollary 1.5]{Dunfield--Rasmussen}.

See Section \ref{Sec-preliminary} for more background. 
See Theorem \ref{deform_character} and Corollary \ref{deform_representation} in Section \ref{Sec-deform} 
for a more detailed statement of Theorem \ref{main_deform}.

\begin{corollary}\label{main_deform_corollary}
	For any knot $K\subset S^3$ with determinant congruent to $3$ modulo $4$,
	or equivalently, with signature congruent to $2$ modulo $4$,
	$\pi_1(S^3\setminus K)$ admits 
	a continuous family of irreducible $\mathrm{SL}(2,\Real)$--representations
	converging to an abelian, noncentral elliptic limit.
\end{corollary}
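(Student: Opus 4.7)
The plan is to derive Corollary~\ref{main_deform_corollary} directly from Theorem~\ref{main_deform} by verifying the hypothesis of that theorem, namely that $\Delta_K$ has a zero of odd order on the complex unit circle, under the assumption $\det K\equiv 3\pmod 4$. The entire argument rests on the reciprocal symmetry of $\Delta_K$ and a one-line mod-$4$ computation; the equivalence with the signature condition then follows from Murasugi's classical formula.

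First I would normalize $\Delta_K$ to be palindromic with $\Delta_K(1)=1$ and write $\Delta_K(t)=c_0+\sum_{i=1}^{g}c_i(t^i+t^{-i})$ with $c_i\in\Integral$. Since $t^i+t^{-i}$ is an integer polynomial in $x=t+t^{-1}$ (via the Chebyshev-type recursion $L_{i+1}=xL_i-L_{i-1}$, $L_0=2$, $L_1=x$), there is a unique $f\in\Integral[x]$ satisfying $\Delta_K(e^{i\theta})=f(2\cos\theta)$; in particular $f(2)=\Delta_K(1)=1$ and $f(-2)=\Delta_K(-1)$. The zeros of $\Delta_K$ on the unit circle avoid $\pm 1$ (because $\Delta_K(1)=1$ and $|\Delta_K(-1)|=\det K\neq 0$) and correspond bijectively, with preserved multiplicity, to the zeros of $f$ in the open interval $(-2,2)$.

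The crucial input is the congruence
\[
\Delta_K(-1)\;=\;c_0+2\sum_{i\ge 1}(-1)^ic_i\;=\;\Delta_K(1)-4\sum_{i\text{ odd}}c_i\;\equiv\;1\pmod 4.
\]
Since $|\Delta_K(-1)|=\det K$ and $\Delta_K(-1)\equiv 1\pmod 4$, the hypothesis $\det K\equiv 3\pmod 4$ forces $\Delta_K(-1)<0$, so $f(-2)<0<f(2)$. A sign change of a real polynomial across a closed interval requires at least one real root of odd multiplicity in the interior (roots of even multiplicity and complex-conjugate pairs contribute factors of definite sign), so $f$ has a root of odd multiplicity in $(-2,2)$. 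This yields the required odd-order zero of $\Delta_K$ on the unit circle, and Theorem~\ref{main_deform} delivers the family of irreducible $\mathrm{SL}(2,\Real)$-representations.

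Finally, the stated equivalence between the determinant and signature conditions follows from Murasugi's formula $\Delta_K(-1)=(-1)^{\sigma(K)/2}\det K$: combined with $\Delta_K(-1)\equiv 1\pmod 4$ it yields $\det K\equiv(-1)^{\sigma(K)/2}\pmod 4$, hence $\det K\equiv 3\pmod 4\iff\sigma(K)\equiv 2\pmod 4$. I do not anticipate any serious obstacle; the one line worth emphasising is the palindromy-driven congruence $\Delta_K(-1)\equiv 1\pmod 4$, which furnishes both the arithmetic bridge between the two forms of the hypothesis and the sign asymmetry that drives the sign-change argument.
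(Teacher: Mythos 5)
Your proof is correct, but it reaches the key point by a different route than the paper. The paper deduces the odd-order zero from the Levine--Tristram signature function (Theorem~\ref{sgn_function_properties}): since $\mathrm{sgn}_K$ is $0$ near $\omega=1$, is constant off the zeros of $\Delta_K$, and jumps by $2\bmod 4$ only at odd-order zeros, the hypothesis $\mathrm{sgn}(K)=\mathrm{sgn}_K(-1)\equiv 2\bmod 4$ forces such a zero on the upper semicircle; the determinant--signature equivalence is then the cited fact $|\Delta_K(-1)|\equiv(-1)^{\mathrm{sgn}(K)/2}\bmod 4$. You instead bypass the signature function entirely for the existence statement: you prove the congruence $\Delta_K(-1)\equiv 1\bmod 4$ directly from palindromy and Conway normalization, conclude from $\det K\equiv 3\bmod 4$ that $\Delta_K(-1)<0<\Delta_K(1)$, and then extract an odd-multiplicity root of the associated real polynomial $f(t+t^{-1})=\Delta_K(t)$ in $(-2,2)$ by the standard sign-change/factorization count, with multiplicities preserved under $t\mapsto t+t^{-1}$ away from $t=\pm1$; Murasugi's sign formula $\Delta_K(-1)=(-1)^{\mathrm{sgn}(K)/2}\det K$ is used only to translate between the two forms of the hypothesis, exactly the content of the paper's fact (4). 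What each approach buys: the paper's argument is a one-liner given the quoted signature facts and stays within the signature-theoretic framework that also drives the main theorem, while yours is essentially self-contained and elementary, needing no input about jumps of $\mathrm{sgn}_K$ across zeros. Two cosmetic points: the correspondence between unit-circle zeros of $\Delta_K$ and roots of $f$ in $(-2,2)$ is two-to-one (conjugate pairs), not a bijection of individual zeros, and real roots of $f$ of odd multiplicity lying outside $[-2,2]$ should be mentioned alongside the even-multiplicity and complex ones as contributing a constant sign on $[-2,2]$; neither affects the validity of your argument.
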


Corollary \ref{main_deform_corollary} follows
as an immediate consequence of Theorem \ref{main_deform},
combined with well-known facts in classical knot theory.
See Corollary \ref{sgn_function_properties_corollary} 
in Section \ref{Sec-preliminary} for the relevant facts about determinant and signature.

In practice, 
the sufficient condition in Corollary \ref{main_deform_corollary}
is easy to check, and holds quite often among all knots. 
For example, 
the condition holds for arguably half of all pretzel knots (see Example \ref{pretzel_example}).

On the other hand, the condition in Corollary \ref{main_deform_corollary}
is not necessary for the Alexander polynomial to have a zero of odd order 
on the complex unit circle.
We supply a different sufficient condition, 
in terms of coefficients of the Alexander polynomial,
as follows.

\begin{theorem}\label{main_zero}
	For any knot $K\subset S^3$,
	the Alexander polynomial 
	$\Delta_K(t)=a_0+a_1\cdot(t+t^{-1})+\cdots+a_d\cdot(t^d+t^{-d})$ of $K$
	has a zero of odd order on the complex unit circle,
	if the following inequality holds for some $j\in\{1,\ldots,d\}$.
	$$|a_j|\geq |a_0|\cdot\cos\left(\frac{\pi}{\lfloor d/j\rfloor +2}\right)$$
	Here, 
	we adopt the Conway normalization for $\Delta_K\in \Integral[t,t^{-1}]$,
	namely, $\Delta_K(t)=\Delta_K(t^{-1})$, and $\Delta_K(1)=1$;
	the notation $\lfloor x\rfloor\in\Integral$ denotes the greatest integer 
	which is less than or equal to $x\in\Real$;
	one may assume $a_d\neq0$, otherwise the condition becoming stronger.
\end{theorem}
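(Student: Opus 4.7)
The plan is to prove the contrapositive. Substituting $t=e^{i\theta}$ into $\Delta_K$ yields the real-valued trigonometric polynomial
$$f(\theta):=\Delta_K(e^{i\theta})=a_0+2\sum_{k=1}^d a_k\cos(k\theta)$$
of degree at most $d$. Since $f(0)=\Delta_K(1)=1>0$ and $f(\pi)=\Delta_K(-1)=\pm\det(K)$ is a nonzero odd integer, the odd-order zeros of $\Delta_K$ on the unit circle correspond bijectively (after identifying $\theta$ with $-\theta$) to the sign changes of $f$ on $(0,\pi)$. Hence the absence of odd-order zeros is equivalent to the condition $f\geq 0$ on $[0,\pi]$, i.e., on all of $\Real$. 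It therefore suffices to prove the following: if $f\geq 0$ everywhere, then $|a_j|<|a_0|\cos(\pi/(\lfloor d/j\rfloor+2))$ for each $j\in\{1,\ldots,d\}$.

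The core step will be Fej\'er's extremal inequality. I would first invoke the Fej\'er--Riesz factorization: any non-negative trigonometric polynomial of degree $\leq d$ equals $|P(e^{i\theta})|^2$ for some complex polynomial $P(z)=\sum_{k=0}^d c_k z^k$. Matching Fourier coefficients gives $a_0=\sum_k|c_k|^2$ and $a_j=\sum_{k=0}^{d-j}c_{k+j}\overline{c_k}$ for $j\geq 1$, so the triangle inequality yields $|a_j|\leq\sum_{k=0}^{d-j}|c_k||c_{k+j}|=\tfrac12\langle u,Mu\rangle$, where $u_k:=|c_k|$ and $M$ is the $(d+1)\times(d+1)$ symmetric matrix with $M_{k,k+j}=M_{k+j,k}=1$ and zero elsewhere. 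The matrix $M$ is the adjacency matrix of a graph that decomposes, according to residues modulo $j$, into a disjoint union of paths; the longest has $\lfloor d/j\rfloor+1$ vertices. Using the classical formula $2\cos(\ell\pi/(N+1))$ for the eigenvalues of the adjacency matrix of a path on $N$ vertices, one concludes that $\lambda_{\max}(M)=2\cos(\pi/(\lfloor d/j\rfloor+2))$, and hence $|a_j|\leq a_0\cos(\pi/(\lfloor d/j\rfloor+2))$.

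Since the hypothesis of the theorem is non-strict, the final step is to upgrade Fej\'er's inequality to a strict one, using the fact that Alexander polynomials have integer coefficients. Equality in the chain above forces the nonnegative vector $u$ to be a top eigenvector of $M$, supported on a longest component of the graph, and the phases of the products $c_{k+j}\overline{c_k}$ to coincide along that component. A direct computation for the resulting extremal polynomial gives $a_j/a_0=\pm\cos(\pi/(\lfloor d/j\rfloor+2))$. When $\lfloor d/j\rfloor\geq 2$, this value is irrational by Niven's theorem on the rational values of the cosine at rational multiples of $\pi$, contradicting the rationality of $a_j/a_0$. When $\lfloor d/j\rfloor=1$, the value equals $\pm 1/2$, which would force $|a_0|$ to be even; but $a_0$ is odd, since $\Delta_K(1)=a_0+2(a_1+\cdots+a_d)=1$. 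Either way equality is impossible, yielding the required strict inequality.

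The main obstacle is Fej\'er's inequality for general $j$: the $j=1$ case is classical, but for larger $j$ one has to recognize the block structure of $M$ and invoke the path-graph spectrum carefully. The strictness upgrade is short yet indispensable, since it is precisely what matches the non-strict form of the theorem's hypothesis; it leans simultaneously on Niven's theorem and on the parity constraint intrinsic to Conway normalization.
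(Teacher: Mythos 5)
Your proposal is correct, but it reaches the conclusion by a genuinely different route than the paper. The paper quotes the Konvalina--Matache bound as a black box (its Theorem concerning palindromic polynomials with a zero on the unit circle) and then handles the issue of even-order zeros by a perturbation trick: assuming all unitary zeros have even order, it pushes them off the circle by deforming the quadratic factors $(t+b+t^{-1})^{2m}$, keeps the coefficients close enough that the strict inequality survives, and derives a contradiction; the non-strict hypothesis is upgraded to a strict one beforehand using the irrationality of $\cos(\pi/n)$ for $n\geq 4$ and the oddness of $a_0$. You instead reformulate ``no odd-order unitary zero'' as nonnegativity of $f(\theta)=\Delta_K(e^{i\theta})$ (legitimate, since $f(0)=1$ and $f(\pi)$ is a nonzero odd integer, and an odd-order zero of $\Delta_K$ at $e^{i\theta_0}$ is exactly a sign change of $f$), and then prove the Konvalina--Matache-type bound from scratch for nonnegative palindromic polynomials via Fej\'er--Riesz factorization, the residue-class decomposition of the coefficient graph into paths, and the spectrum $2\cos(k\pi/(N+1))$ of the path on $N$ vertices; this handles even-order zeros with no perturbation at all, since Fej\'er--Riesz applies to nonnegative polynomials that vanish on the circle. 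Your final strictness step is essentially the paper's arithmetic step (Niven/Watkins--Zeitlin irrationality plus the parity of $a_0$), only deployed to exclude equality in the Fej\'er bound rather than to sharpen the hypothesis; note that the detour through the extremal eigenvector structure is unnecessary, since equality $|a_j|=|a_0|\cos\left(\pi/(\lfloor d/j\rfloor+2)\right)$ already makes the cosine rational. The trade-off: the paper's argument is shorter but leans on an external theorem and a factorization-perturbation lemma, while yours is self-contained, conceptually explains where the constant $\cos\left(\pi/(\lfloor d/j\rfloor+2)\right)$ comes from, and in fact proves the slightly stronger statement that nonnegativity alone (not merely absence of unitary zeros) forces the strict coefficient bound under the integrality and parity hypotheses.
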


For any palindrome Laurent polynomial with real coefficients,
Konvalina and Matache show the existence of a zero on the complex unit circle,
under the similar inequality condition as in Theorem \ref{main_zero} \cite[Theorem 1]{Konvalina--Matache}
(see Theorem \ref{criterion_KM} for a restatement).
Theorem \ref{main_zero} strengthens the conclusion in the case of 
the Alexander polynomial $\Delta_K$ for any knot $K\subset S^3$,
adding the assertion about odd order.
The strengthening actually only makes use of the extra property
that $\Delta_K$ has integral coefficients and has odd value at $t=1$.
See Theorem \ref{criterion_odd_order} in Section \ref{Sec-order} 
for the slightly more general statement.

Recall that an \emph{L-space} is 
an oriented rational homology $3$--sphere $Y$ with the simplest possible Heegaard Floer homology.
Namely, $\widehat{\mathrm{HF}}(Y)$ is a finitely generated free $\Integral$--module
of rank equal to the cardinality of $H_1(Y;\Integral)$.
An oriented knot $K\subset S^3$ is called an \emph{L-space knot},
if the manifold $S^3_r(K)$ obtained by the $r$--surgery of $S^3$ along $K$ is an L-space,
for some (necessarily nonzero) coefficient $r\in\Rational$.
In this case, $K$ also admits an L-space surgery 
with some integer coefficient \cite[Corollary 1.3]{Ni_hfk_fibered} (see also \cite[Proposition 9.6]{OS_hfk_rational}).
Examples of L-space knots include all the Berge knots,
which are constructed by Berge and which admit lens space surgeries
(unpublished; see \cite[Section 1.5]{OS_hfk_lens}).

\begin{corollary}\label{L-space_knot_corollary}
	The Alexander polynomial of any nontrivial L-space knot $K\subset S^3$
	has a zero of odd order on the complex unit circle.
	Hence,
	$\pi_1(S^3\setminus K)$ admits 
	a continuous family of irreducible $\mathrm{SL}(2,\Real)$--representations
	converging to an abelian, noncentral elliptic limit.
\end{corollary}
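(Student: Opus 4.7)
Since Theorem \ref{main_deform} already converts any odd-order unit-circle zero of $\Delta_K$ into the desired family of irreducible $\mathrm{SL}(2,\Real)$--representations, and Theorem \ref{main_zero} reduces the existence of such a zero to a numerical inequality among the Conway coefficients, the plan is simply to verify the hypothesis of Theorem \ref{main_zero} for every nontrivial L-space knot.

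The essential input is the well-known structural constraint on Alexander polynomials of L-space knots due to Ozsv\'ath and Szab\'o: for any L-space knot $K\subset S^3$, the polynomial $\Delta_K(t)$ has all coefficients in $\{-1,0,+1\}$, and the nonzero coefficients alternate in sign as the exponent increases. Translated into the Conway normalization $\Delta_K(t)=a_0+\sum_{j=1}^{d}a_j(t^j+t^{-j})$ used in Theorem \ref{main_zero}, this says $|a_j|\in\{0,1\}$ for every $j\in\{0,1,\ldots,d\}$, with the nonzero $a_j$'s alternating in sign. The leading coefficient $a_d$ is nonzero by the choice of $d$. The constant term $a_0$ lies in $\{-1,0,+1\}$, but the identity $a_0+2\sum_{j\geq 1}a_j=\Delta_K(1)=1$ forces $a_0$ to be odd, so $a_0=\pm1$. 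For a nontrivial L-space knot, $\Delta_K\neq 1$, which forces $d\geq 1$.

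With these nonvanishing facts in hand, I would apply Theorem \ref{main_zero} with the extremal choice $j=d$. Since $\lfloor d/d\rfloor+2=3$ and $\cos(\pi/3)=\tfrac{1}{2}$, the inequality to verify reads
\[
|a_d|\ \geq\ |a_0|\cdot\cos\!\left(\tfrac{\pi}{3}\right)\ =\ \tfrac{1}{2}\cdot|a_0|,
\]
which is immediate from $|a_d|=|a_0|=1$. Theorem \ref{main_zero} then yields an odd-order zero of $\Delta_K$ on the complex unit circle, and Theorem \ref{main_deform} yields the family of irreducible $\mathrm{SL}(2,\Real)$--representations converging to an abelian noncentral elliptic limit.

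The only real obstacle is conceptual rather than technical: one must invoke the deep Ozsv\'ath--Szab\'o restriction on L-space knot Alexander polynomials as a black box. After that, the argument is a one-line verification, and the elegance lies in noting that at the extremal index $j=d$ the threshold $\cos(\pi/3)=\tfrac{1}{2}$ is low enough to be cleared trivially by the $\pm 1$ leading coefficient forced by L-space geometry.
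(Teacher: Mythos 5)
Your argument is essentially the paper's own proof: invoke the Ozsv\'ath--Szab\'o form of $\Delta_K$ for L-space knots, take $j=d$ in Theorem \ref{main_zero} so that the threshold is $\cos(\pi/3)=\tfrac12$, and conclude with Theorem \ref{main_deform}. The only step you assert without justification is that a nontrivial L-space knot has nonconstant Alexander polynomial (so that $d\geq 1$ and the top Conway coefficient is $\pm1$); this is true but is itself a nontrivial input, which the paper supplies by citing Ni's theorem that L-space knots are fibered together with the fact that the genus of a fibered knot is detected by the topmost term of $\Delta_K$. With that citation added, your proof coincides with the paper's.
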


Corollary \ref{L-space_knot_corollary} follows as a direct application of Theorems \ref{main_deform} and \ref{main_zero},
combined with well-known properties of L-space knots.
In fact, the Alexander polynomial of any L-space knot $K\subset S^3$ takes the form
\begin{equation}\label{AP_LSK}
\Delta_K(t)=\epsilon_0+\epsilon_1\cdot\left(t^{j_1}+t^{-j_1}\right)+\cdots+\epsilon_k\cdot\left(t^{j_k}+t^{-j_k}\right),
\end{equation}
for some sequence of powers $j_1,\ldots,j_k\in\Integral$ with $0<j_1<\cdots<j_k$,
and for some sequence of coefficients $\epsilon_1,\ldots,\epsilon_k\in\{\pm1\}$ with alternating signs
\cite[Corollary 1.3]{OS_hfk_lens}.
If $K$ is nontrivial, the topmost power $j_k$ (exists and) is equal to the genus of $K$,
since L-space knots are always fibered \cite[Corollary 1.3]{Ni_hfk_fibered},
and since the genus of fibered knots 
are detected by the topmost term of the Alexander polynomial.
Therefore, we obtain Corollary \ref{L-space_knot_corollary} immediately 
from Theorem \ref{main_zero} (taking $j$ to be $d=j_k$), plus Theorem \ref{main_deform}.

The actual situation is likely to be stronger,
compared to what Corollary \ref{L-space_knot_corollary} confirms.
By experiment, Culler and Dunfield notice that 
nonconstant Laurent polynomials of the form (\ref{AP_LSK}) 
appear to always have a simple zero on the complex unit circle 
\cite[Section 9, the question (5)]{Culler--Dunfield}.
If this is the case, 
then, based on the stronger criterion due to Herald and Zhang \cite[Theorem 1]{Herald--Zhang},
one will be able to confirm the following strengthening of Corollary \ref{L-space_knot_corollary}.

\begin{conjecture}\label{LSK_surgery_conjecture}
For any nontrivial L-space knot $K\subset S^3$,
and for all sufficient small (in absolute value), nonzero coefficients $r\in\Rational$,
depending on $K$,
the surgery manifold group $\pi_1(S^3_r(K))$ admits an irreducible $\mathrm{SL}(2,\Real)$--representation
which lifts to $\widetilde{\mathrm{SL}}(2,\Real)$.
\end{conjecture}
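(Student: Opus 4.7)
The plan is the conditional strategy foreshadowed in the paragraph preceding Conjecture~\ref{LSK_surgery_conjecture}: take on faith the Culler--Dunfield observation that the Alexander polynomial of a nontrivial L-space knot admits a \emph{simple} zero $z_{0}=e^{2\pi i\alpha}$ on the complex unit circle (with $\alpha\in(0,1/2)$, say), and then apply the Herald--Zhang refinement \cite[Theorem 1]{Herald--Zhang} of Theorem~\ref{main_deform} in place of Theorem~\ref{main_deform} itself. Applied to the Alexander polynomial of the form~(\ref{AP_LSK}) with alternating $\pm1$ coefficients, the Culler--Dunfield observation supplies the desired simple zero; from that point on the argument no longer uses the L-space hypothesis directly.

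Granting the simpleness, the Herald--Zhang sharpening produces a smooth arc $\rho_{s}$ of irreducible $\mathrm{SL}(2,\Real)$--representations of $\pi_{1}(S^{3}\setminus K)$ converging to an abelian noncentral elliptic limit $\rho_{0}$ whose meridian is rotation by angle $2\pi\alpha$, together with the nondegeneracy needed on the peripheral torus. Concretely, let $m(s)$ and $\ell(s)$ denote the rotation parameters of the lifts $\widetilde{\rho_{s}}(\mu)$ and $\widetilde{\rho_{s}}(\lambda)$ in $\widetilde{\mathrm{SL}}(2,\Real)$ provided by the second clause of Theorem~\ref{main_deform}, measured in units of full rotations; then $m(0)=\alpha$, $\ell(0)=0$, and $\ell'(0)\neq 0$. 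For a small nonzero slope $r=p/q$ in lowest terms, the representation $\rho_{s}$ factors through $\pi_{1}(S^{3}_{r}(K))$ precisely when $p\,m(s)+q\,\ell(s)\in\Integral$, and the lift descends to $\widetilde{\mathrm{SL}}(2,\Real)$ precisely when this integer is $0$. For $r$ and $s$ both small, that last equation reduces to $\ell(s)=-r\,m(s)$, which by $\ell'(0)\neq 0$ and the implicit function theorem has a unique small solution $s=s(r)$, with $s(r)\to 0$ as $r\to 0$ and $s(r)\neq 0$ whenever $r\neq 0$. The resulting $\rho_{s(r)}$ is irreducible, factors through $\pi_{1}(S^{3}_{r}(K))$, and lifts to $\widetilde{\mathrm{SL}}(2,\Real)$, which is exactly what the conjecture asserts.

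The only genuine obstacle, and hence the reason the statement remains a conjecture rather than a theorem, lies in step one. The arithmetic criteria developed in Section~\ref{Sec-order}, including Theorem~\ref{main_zero}, are only fine enough to detect odd-order unit-circle zeros, and some substantially new input would be needed to rule out, for instance, a triple zero of a Laurent polynomial of the form~(\ref{AP_LSK}). In particular, any verification of the Culler--Dunfield simpleness observation, even restricted to the specific form~(\ref{AP_LSK}) with alternating $\pm 1$ coefficients, would immediately upgrade the conditional plan above to a proof of Conjecture~\ref{LSK_surgery_conjecture}.
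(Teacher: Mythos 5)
The statement you are addressing is a \emph{conjecture}: the paper offers no proof of it, only the same conditional reduction that you spell out, namely that the Culler--Dunfield experimental observation (every nonconstant Laurent polynomial of the form~(\ref{AP_LSK}) has a \emph{simple} zero on the complex unit circle) together with the Herald--Zhang theorem \cite[Theorem 1]{Herald--Zhang} would yield the conclusion for all sufficiently small nonzero slopes. So your proposal is not a proof and cannot be one as written: its first step is precisely the unestablished ingredient. You are candid about this, and your closing paragraph correctly identifies why the gap is genuine --- the machinery of Section~\ref{Sec-order} (Theorem~\ref{main_zero} and Lemma~\ref{criterion_KM_strict}) only detects zeros of \emph{odd} order, and nothing in the paper rules out, say, a triple unit-circle zero of a polynomial of the form~(\ref{AP_LSK}). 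In that sense your write-up is an accurate account of why the statement is a conjecture rather than a theorem, and it coincides with the author's own intended route; but judged as a proof attempt it has a missing idea at the very first step, and everything after it is conditional.

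One secondary caution about your middle paragraph: you attribute to Herald--Zhang a rather specific package (a smooth arc with meridional rotation number $m(0)=\alpha$, longitudinal rotation number $\ell(0)=0$, and the nondegeneracy $\ell'(0)\neq 0$), and then re-derive the surgery conclusion by an implicit-function-theorem argument on the equation $p\,m(s)+q\,\ell(s)\in\Integral$. Their Theorem 1 is formulated directly as a statement about Dehn fillings with small slopes (which is why the paper can cite it as immediately giving Conjecture~\ref{LSK_surgery_conjecture} once simpleness is known); if you want to run your own peripheral computation instead, the nonconstancy of the longitude parameter along the deforming arc is exactly the delicate point flagged in Section~\ref{Sec-discussion} (compare Question~\ref{longitude_rigidity_question}) and should be quoted from, or proved at the level of, their argument rather than asserted. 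Since the whole chain is conditional on the simple-zero hypothesis anyway, this does not change the verdict, but it is where your sketch is loosest.
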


For any finitely generated $3$--manifold group,
admitting a nontrivial $\widetilde{\mathrm{SL}}(2,\Real)$--representation
is a sufficient condition for left-orderability
 \cite[Theorem 1.1]{BRW_orderable} 
(see also \cite{Bergman_orderable}).
Boyer, Gordon, and Watson conjecture
that, among all rational homology $3$--spheres,
L-spaces are precisely those with left-orderable fundamental groups,
\cite[Conjecture 1]{BGW_conjecture}.
Therefore, a proof of Conjecture \ref{LSK_surgery_conjecture} 
will support the Boyer--Gordon--Watson conjecture
in the special case of L-space knot surgeries with sufficiently small coefficients.

See Section \ref{Sec-deform} for the proof of Theorem \ref{main_deform}.
See Section \ref{Sec-order} for the proof of Theorem \ref{main_zero}.
See Section \ref{Sec-discussion} for additional comments and questions.

\subsection*{Ingredients}
Theorem \ref{main_zero} is deduced from 
the aforementioned theorem of Konvalina and Matache \cite[Theorem 1]{Konvalina--Matache},
by perturbing zeros of the given polynomial.
The perturbation trick only makes simple use of basic polynomial theory,
so we refer the reader directly to Section \ref{Sec-order} for the proof.

Below, we explain the key ideas for generating the proof of Theorem \ref{main_deform}.
To this end, we need to briefly recall the approach of Heusener and Kroll 
to the prototype of Theorem \ref{main_deform}.
Their theorem states similarly with $\mathrm{SU}(2)$--representations 
instead of $\mathrm{SL}(2,\Real)$--representations \cite[Theorem 1.1]{Heusener--Kroll_abelian}.

For any knot $K\subset S^3$, 
we can identify $\pi_1(S^3\setminus K)$ with a finitely presented group
of the form
$\Pi_\sigma=\langle x_1,\ldots x_n| x_1=\sigma(x_1),\ldots,x_n=\sigma(x_n)\rangle$,
where $\sigma$ is some element 
in the braid group $\mathscr{B}_n$ of some strand number $n$.
The group $\mathscr{B}_n$ naturally acts on the free group 
$F_n=\langle x_1,\ldots,x_n\rangle$ by automorphisms fixing 
the word $x_1\cdots x_n$.
There are two particular homomorphisms of
the free group (of rank $2n-1$)
$WF_n=\langle x_1,\ldots,x_n,y_1,\ldots,y_n|x_1\cdots x_n=y_1\cdots y_n\rangle$,
both retracting onto the obvious subgroup $F_n$.
One we denote as $r_{\mathrm{id}}\colon WF_n\to F_n$,
such that each $x_i$ and each $y_i$ map to $x_i$.
The other we denote as $r_{\sigma}\colon WF_n\to F_n$,
such that each $x_i$ maps to $x_i$, and each $y_i$ maps to $\sigma(x_i)$.
These surjective homomorphisms induce embeddings of 
the spaces of $\mathrm{SU}(2)$--characters
$$r_{\mathrm{id}}^*,r_{\sigma}^*\colon
\mathcal{X}(F_n,\mathrm{SU}(2))\to \mathcal{X}(WF_n,\mathrm{SU}(2)).$$
Therefore, $\mathcal{X}(\Pi_\sigma,\mathrm{SU}(2))$ 
can be identified as the intersection in $\mathcal{X}(WF_n,\mathrm{SU}(2))$
of the embedded images of $\mathcal{X}(F_n,\mathrm{SU}(2))$ 
via $r_{\mathrm{id}}^*$ and via $r_{\sigma}^*$.

For any $\tau\in[-2,2]$, the trace $\tau$--slice of $\mathcal{X}(\Pi_\sigma,\mathrm{SU}(2))$
refers to the closed subspace $\mathcal{X}_\tau(\Pi_\sigma,\mathrm{SU}(2))$ 
consisting of all the $\mathrm{SU}(2)$--characters $\chi\colon \Pi_\sigma\to[-2,2]$ 
with the property $\chi(x_1)=\cdots=\chi(x_n)=\tau$.
The irreducible $\mathrm{SU}(2)$--characters form an open subspace
$\mathcal{X}^{\mathtt{irr}}_\tau(\Pi_\sigma,\mathrm{SU}(2))$
in $\mathcal{X}_\tau(\Pi_\sigma,\mathrm{SU}(2))$, 
called the trace $\tau$--slice of $\mathcal{X}^{\mathtt{irr}}(\Pi_\sigma,\mathrm{SU}(2))$.
The trace $\tau$--slices for $F_n$ and for $WF_n$ are similarly defined,
by requiring $\chi(x_i)=\tau$ for each $x_i$, 
or $\chi(x_i)=\chi(y_i)=\tau$ for each $x_i$ and for each $y_i$, accordingly.
The intersection in $\mathcal{X}^{\mathtt{irr}}_\tau(WF_n,\mathrm{SU}(2))$
of the properly embedded images of $\mathcal{X}^{\mathtt{irr}}_\tau(F_n,\mathrm{SU}(2))$,
via $r_{\mathrm{id}}^*$ and via $r_{\sigma}^*$
is identified with $\mathcal{X}^{\mathtt{irr}}_\tau(\Pi_\sigma,\mathrm{SU}(2))$.

If $\tau\neq\pm2$, the trace slices $\mathcal{X}^{\mathtt{irr}}_\tau(F_n,\mathrm{SU}(2))$
and $\mathcal{X}^{\mathtt{irr}}_\tau(WF_n,\mathrm{SU}(2))$ are oriented smooth manifolds,
of dimension $2n-3$ and $4n-6$, respectively.
Moreover, for any $\theta\in(0,\pi)$, 
such that $e^{2\theta\cdot\sqrt{-1}}$ is not a complex zero of the Alexander polynomial $\Delta_K$,
the trace slice $\mathcal{X}^{\mathtt{irr}}_\tau(\Pi_\sigma,\mathrm{SU}(2))$
is compact for $\tau=2\cos\theta$.

With these facts, Heusener and Kroll count the intersections between 
the two proper embeddings of $\mathcal{X}^{\mathtt{irr}}_\tau(F_n,\mathrm{SU}(2))$ 
in $\mathcal{X}^{\mathtt{irr}}_\tau(WF_n,\mathrm{SU}(2))$, 
using transverse perturbations as usual in differential topology.
They show that the intersection number $h^\theta(K)\in\Integral$ is (well-defined and) equal to
$1/2$ times the value of the Levine--Tristram signature function 
$\mathrm{sgn}_K\colon \mathrm{U}(1)\to\Integral$ 
evaluated at $e^{2\theta\cdot\sqrt{-1}}$,
assuming $e^{2\theta\cdot\sqrt{-1}}$ is not a zero of $\Delta_K$.

If $e^{2\theta_0\cdot\sqrt{-1}}$ is an odd-order zero of $\Delta_K$ for some $\theta_0\in(0,\pi)$,
the values of $\mathrm{sgn}_K$ near $e^{2\theta_0\cdot\sqrt{-1}}$
are not equal on the different sides of $e^{2\theta_0\cdot\sqrt{-1}}$
on the complex unit circle.
This implies that 
$\mathcal{X}^{\mathtt{irr}}_\tau(\Pi_\sigma,\mathrm{SU}(2))$ must be nonempty
for some $\theta$ near $\theta_0$ and for $\tau=2\cos\theta$.
With a little more argument,
Heusener and Kroll prove the $\mathrm{SU}(2)$ prototype of Theorem \ref{main_deform}.

The similar argument with $\mathrm{SL}(2,\Real)$ in place of $\mathrm{SU}(2)$
fails at the step of intersection counting,
because the similarly defined trace slice
$\mathcal{X}_\tau(\Pi_\sigma,\mathrm{SL}(2,\Real))$
is not necessarily compact,
(so the counting could be infinite).
Dunfield and Rasmussen notice that 
the compactness of $\mathcal{X}_\tau(\Pi_\sigma,\mathrm{SL}(2,\Real))$
can be guaranteed for any small knot $K\subset S^3$.
Under the small knot assumption,
they generalize Heusener and Kroll's approach
successfully to the $\mathrm{SL}(2,\Real)$ case.
In particular, 
they obtain the $\mathrm{SL}(2,\Real)$ analogue of $h^\theta(K)$
and establish 
the similar relation with the Levine--Tristram signature function 
\cite[Theorems 1.1 and 1.2]{Dunfield--Rasmussen}.

Our proof of Theorem \ref{main_deform} relies on Heusener and Kroll's result
in a different way.
Without assuming compactness of $\mathcal{X}_\tau(\Pi_\sigma,\mathrm{SL}(2,\Real))$,
we abandon the approach of counting globally the $\mathrm{SL}(2,\Real)$ trace slice intersections.
Rather, we count them locally (and modulo $2$) near the abelian character of interest.
Our counting depends essentially on some additional choices,
including a suitably chosen open subset, 
together with suitably chosen perturbed trace slices,
such that the intersection of perturbed trace slices in the open subset is compact.
This way, it makes sense to speak of the local modulo $2$ intersection number
for the perturbed trace slices with respect to the open subset.
Due to the choices, 
the local counting does not lead to a global invariant as before.
However, the new strategy allows us 
to count the local modulo $2$ intersection numbers simultaneously
for three different types of trace slices,
namely, the $\mathrm{SL}(2,\Real)$ type, the $\mathrm{SU}(2)$ type, and the $\mathrm{SL}(2,\Complex)$ type.

These three types of local modulo $2$ intersection numbers satisfy a relation.
To explain the idea, note that we can define $\mathrm{SL}(2,\Complex)$ trace $\tau$--slices 
for any parameter $\tau\in\Complex$, and 
it works for each of the groups $F_n$, $WF_n$, and $\Pi_\sigma$.
Taking $\Pi_\sigma$ for example, 
the space of complex characters $\mathcal{X}(\Pi_\sigma,\mathrm{SL}(2,\Complex))$ 
is naturally a complex affine algebraic set.
The complex conjugation acts on it as an involution,
with fixed locus the union of
$\mathcal{X}(\Pi_\sigma,\mathrm{SL}(2,\Real))$ and $\mathcal{X}(\Pi_\sigma,\mathrm{SU}(2))$,
(which overlap along the locus of real reducible characters).
For any real trace value $\tau$, 
$\mathcal{X}^{\mathtt{irr}}_\tau(\Pi_\sigma,\mathrm{SL}(2,\Complex))$
is partitioned into 
the disjoint union of $\mathcal{X}^{\mathtt{irr}}_\tau(\Pi_\sigma,\mathrm{SL}(2,\Real))$,
and $\mathcal{X}^{\mathtt{irr}}_\tau(\Pi_\sigma,\mathrm{SU}(2))$ (empty if $\tau\not\in[-2,2])$,
and a complementary collection of non-real irreducible $\mathrm{SL}(2,\Complex)$--characters,
which are paired up by the involution.
Assume for the moment that we can maintain the similar partition in our local counting.
Then,
the resulting $\mathrm{SL}(2,\Complex)$--type local modulo $2$ intersection number
will be the modulo $2$ sum of 
the $\mathrm{SL}(2,\Real)$--type number and the $\mathrm{SU}(2)$--type number.

Near any trace value $\tau_0=2\cos\theta_0$ corresponding to an odd-order zero $e^{2\theta_0\cdot\sqrt{-1}}$ of $\Delta_K$,
we can show that the $\mathrm{SL}(2,\Complex)$--type local modulo $2$ intersection number 
is constant for all complex $\tau\neq\tau_0$ near $\tau_0$.
Along the real axis,
when $\tau$ crosses from one side of $\tau_0$ to the other side,
the $\mathrm{SU}(2)$--type local modulo $2$ intersection number jumps by $1$ modulo $2$,
as can be implied by Heusener and Kroll's theorem.
This will force a jump of the $\mathrm{SL}(2,\Real)$--type local modulo $2$ intersection number,
by $1$ modulo $2$,
which is what we need for proving Theorem \ref{main_deform}.

The most technical ingredient for carrying out the above strategy
turns out to be a suitable version of equivariant transverse perturbation,
with respect to the aformentioned involution.
We introduce a notion called $(\nu,J)$--manifolds (Definition \ref{nu_J_manifold_def}),
where $\nu$ comes originally from the complex conjugation,
and where $J$ remembers some almost-complex structure along the fixed locus.
We establish several lemmas regarding 
what we call $(\nu,J)$--equivariant transversality 
in Section \ref{Sec-transversality}.

The most crucial construction in our proof of Theorem \ref{main_deform}
lies in setting up the initial choices.
The constructive part is done in Lemma \ref{local_data}.
The compactness property for legitimating 
local modulo $2$ intersection numbers is verified in Lemma \ref{not_zero}.
The aformentioned relation 
among the three types of local modulo $2$ intersection numbers
is essentially used for proving Lemma \ref{half_signature_parity}.

\subsection*{Organization}
In Section \ref{Sec-preliminary}, 
we include more background about Theorem \ref{main_deform}.
This section is supplementary to the introduction.
The proofs of our main results 
are logically independent from Section \ref{Sec-preliminary}.

In Section \ref{Sec-braid}, 
we review braid presentation for knot groups.
In Section \ref{Sec-qreps}, 
we review $G$--representations and of $G$--characters of finitely generated groups,
for $G$ being $\mathrm{SL}(2,\Complex)$, $\mathrm{SL}(2,\Real)$, and $\mathrm{SU}(2)$,
treating from the unified perspective of quaternion algebras.
In Section \ref{Sec-trace_slices},
we discuss trace slices in spaces of $G$--characters,
for those groups arising from a braid presentation.
These three sections are somewhat expository.
They are written as preliminary for the proof of Theorem \ref{main_deform}.

In Section \ref{Sec-transversality},
we introduce $(\nu,J)$--manifolds.
We prove lemmas for disposing $(\nu,J)$--equivariant transverse perturbations.
This section builds the toolkit for our proof of Theorem \ref{main_deform}.

In Section \ref{Sec-deform}, we prove Theorem \ref{main_deform}.
The proof relies on the materials from Sections \ref{Sec-braid}--\ref{Sec-transversality}.
However,
if the reader is already familiar with Heusener and Kroll's proof for the $\mathrm{SU}(2)$ case,
it should be possible to first skim over the notations and the statements in the preceeding sections,
skipping the proofs, and then go directly to Section \ref{Sec-deform}.
This may be a quick way to access the core arguments.

In Section \ref{Sec-order}, we prove Theorem \ref{main_zero}.
This section is logically independent from other sections, and can be read directly.

In Section \ref{Sec-discussion}, we put further comments and pose open questions.

\subsection*{Acknowledgment}
The author would like to thank Nathan Dunfield for valuable communications.

\section{Background}\label{Sec-preliminary}
This section supplies as an overview on the topic
of deforming abelian representations for knot groups.
We refer the reader to Lickorish's textbook \cite{Lickorish_book_knot}
for an introduction to classical knot theory.

For any (tame, unoriented) knot $K\subset S^3$,
the \emph{Alexander polynomial} of $K$
is a Laurent polynomial in $\Integral[t,t^{-1}]$,
well-defined up to a multiplicative unit in 
$(\Integral[t,t^{-1}])^\times=\{\pm t^r\colon r\in\Integral\}$.
The Alexander polynomial depends only on the (unoriented) isotopy class of $K$.
The ambiguity can be resolved by the \emph{Conway normalization}.
Namely, there is a unique representative, denoted as
$$\Delta_K\in\Integral[t,t^{-1}],$$
which satisfies the properties $\Delta_K(t)=\Delta_K(t^{-1})$ and $\Delta_K(1)=1$.

We recall a procedure for computing the Alexander polynomial,
using a Seifert matrix.
One may take this procedure as one of the ways to define the Alexander polynomial.
Take a Seifert surface $F\subset S^3$ for $K$
(that is, an embedded orientable connected compact surface with boundary $\partial F=K$).
Fixing an auxiliary orientation for $F$,
we obtain a well-defined $\Integral$--bilinear pairing
$H_1(F;\Integral)\times H_1(F;\Integral)\to \Integral$,
called the \emph{Seifert pairing} with respect to $F$,
which is characterized by the following property.
For any oriented simple closed curves $a,b\subset \mathrm{int}(F)$, 
the Seifert pairing of $([a],[b])$ 
is equal to the linking number $\mathrm{lk}(a^-,b^+)\in\Integral$,
where $(a^-,b^+)$ denote the disjoint pair of oriented knots in $S^3$ obtained by
pushing the curves slightly off $F$, 
$a$ into the negative side and $b$ into the positive side.
Note that $H_1(F;\Integral)$ is a finitely generated free $\Integral$--module of even rank.
Fixing an auxiliary basis for $H_1(F;\Integral)$,
we obtain an integral square matrix $V$, representing the pairing.
It is known that the matrix $V-V^{\mathrm{T}}$ represents 
the intersection pairing on $H_1(F;\Integral)$.
Here, the superscript notation $\mathrm{T}$ denotes the matrix transpose.
Any matrix $V$ obtained this way is called a \emph{Seifert matrix} for $K$.

Using any Seifert matrix $V$ for $K$, 
the following formula computes the Alexander polynomial of $K$.
\begin{equation}\label{AP_def}
	\Delta_K(t)=\mathrm{det}\left(t^{1/2}\cdot V- t^{-1/2}\cdot V^{\mathrm{T}}\right)
\end{equation}
Here, the notation $\mathrm{det}$ denotes the matrix determinant.
Note that the expression 
on the right-hand side results in a Laurent polynomial in $\Integral[t,t^{-1}]$,
which satisfies the Conway normalization.
See \cite[Chapter 6]{Lickorish_book_knot}.

%Note that the complex square matrix $\omega^{1/2}\cdot V- \omega^{-1/2}\cdot V^{\mathrm{T}}$
%is Hermitian for any $\omega^{1/2}$ on the complex unit circle $U(1)\subset \Complex$.
%For any oriented knot $K\subset S^3$, 
%we assume $V$ to be obtained from a compatibly oriented Seifert surface $F$ for $K$.
%In this case, the signature of $\omega^{1/2}\cdot V- \omega^{-1/2}\cdot V^{\mathrm{T}}$

For any knot $K\subset S^3$, 
the \emph{Levine--Tristram signature function}
is a function defined on the complex unit circle with integer values, denoted as
$$\mathrm{sgn}_K\colon \mathrm{U}(1)\to \Integral.$$
The Levine--Tristram signature function depends only on the isotopy class of $K$.

Using any Seifert matrix $V$ for $K$,
the following formula computes the Levine--Tristram signature function of $K$,
evaluated at any $\omega\in\Complex$ with $|\omega|=1$.
\begin{equation}\label{sgn_omega_def}
	\mathrm{sgn}_K(\omega)=
	\begin{cases}
	\mathrm{sgn}(H(\omega))& \omega\neq1\\
	0& \omega=1
	\end{cases}
\end{equation}
Here, the notation $\mathrm{sgn}(H(\omega))$ 
denotes the signature of the Hermitian matrix
\begin{equation}\label{H_omega_def}
	H(\omega)=(1-\omega)\cdot V+(1-\bar{\omega})\cdot V^{\mathrm{T}}.
\end{equation}

We recall the following relations of the Levine--Tristram signature function $\mathrm{sgn}_K$
with the Alexander polynomial $\Delta_K$.

\begin{theorem}\label{sgn_function_properties}
	For any knot $K\subset S^3$, the following statements all hold.
	\begin{enumerate}
	\item
	The function $\mathrm{sgn}_K$
	is invariant under complex conjugation of the variable.
	\item
	The function $\mathrm{sgn}_K$ is constant
	on each open subarc of the complex unit circle 
	complementary to the zeros of $\Delta_K$.	
	\item
	The value of $\mathrm{sgn}_K$ changes by $2$ modulo $4$	
	as the variable passes across an odd-order zero of $\Delta_K$ on the complex unit circle,
	or by $0$ modulo $4$ across an even-order zero.
	\item
	The absolute value $|\Delta_K(-1)|$
	is congruent to $(-1)^{\mathrm{sgn}_K(-1)/2}$ modulo $4$.
	Note that $\Delta_K(-1)$ is an odd integer,
	and $\mathrm{sgn}_K(-1)$ is an even integer.			
	\end{enumerate}
\end{theorem}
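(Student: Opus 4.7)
The plan is to derive all four statements from the definition $\mathrm{sgn}_K(\omega)=\mathrm{sgn}(H(\omega))$ for $H(\omega)=(1-\omega)V+(1-\bar\omega)V^{\mathrm{T}}$, together with the Seifert matrix formula $\Delta_K(t)=\det(t^{1/2}V-t^{-1/2}V^{\mathrm{T}})$. The unifying computation is the factorization
$$\det H(\omega)\,=\,(\omega^{1/2}-\omega^{-1/2})^{2g}\cdot\Delta_K(\omega),$$
valid for $\omega\neq 1$ on the unit circle, obtained by pulling the factor $-\omega^{1/2}(\omega^{1/2}-\omega^{-1/2})$ from $(1-\omega)$ and its conjugate from $(1-\bar\omega)$; here $2g$ denotes the rank of $V$.

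Part (1) would follow by direct computation: $H(\bar\omega)=H(\omega)^{\mathrm{T}}$, and for a Hermitian matrix $M$ one has $M^{\mathrm{T}}=\overline{M}$, whose spectrum consists of the complex conjugates of the (real) eigenvalues of $M$, hence is the same multiset; so the signatures agree. Part (2) is immediate from the displayed factorization: on any open subarc of the unit circle avoiding the zeros of $\Delta_K$, the matrix $H(\omega)$ stays nonsingular, so its continuously varying real eigenvalues cannot cross zero, and the signature must be locally constant.

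For part (3), I would parameterize $\omega=e^{i\theta}$ and invoke Rellich's theorem to select real-analytic eigenvalue branches $\lambda_1(\theta),\ldots,\lambda_{2g}(\theta)$. If $\lambda_j$ vanishes to order $m_j$ at $\theta_0$, then $\sum_j m_j$ equals the order of vanishing of $\det H$ at $\omega_0=e^{i\theta_0}$, which by the displayed factorization equals the order $k$ of $\Delta_K$ at $\omega_0$. As $\theta$ passes through $\theta_0$, each $\lambda_j$ changes sign if and only if $m_j$ is odd, contributing $\pm 2$ to the signature jump in that case and $0$ otherwise. Hence the jump equals $2\cdot(\text{signed count of }j\text{ with }m_j\text{ odd})$, which reduces modulo $4$ to $2\cdot\#\{j:m_j\text{ odd}\}$. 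Since this count has the same parity as $\sum_j m_j=k$, the stated mod-$4$ dichotomy follows.

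For part (4), evaluating at $\omega=-1$ yields $H(-1)=2(V+V^{\mathrm{T}})$ and, from the Seifert formula, $\Delta_K(-1)=(-1)^g\det(V+V^{\mathrm{T}})$, so $|\Delta_K(-1)|=|\det(V+V^{\mathrm{T}})|$ and $\mathrm{sgn}_K(-1)=\mathrm{sgn}(V+V^{\mathrm{T}})$. Setting $A=V+V^{\mathrm{T}}$, I would observe that $A\equiv V-V^{\mathrm{T}}\pmod{2}$ is an alternating integer matrix of determinant $1\bmod 2$, so $A$ is an even symmetric integer bilinear form of odd discriminant. The desired congruence $|\det A|\equiv(-1)^{\mathrm{sgn}(A)/2}\pmod 4$ is then a standard arithmetic fact about such forms, provable by diagonalizing over $\mathbb{Q}$ and tracking the sign of $\det A$ against the parity of the number of negative diagonal entries. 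Making this last step rigorous in full generality is the main technical obstacle, but it is a classical consequence of the theory of integral symmetric bilinear forms and can be invoked from the literature.
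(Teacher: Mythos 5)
The paper offers no proof of this theorem at all: it is quoted from the literature (Conway's survey, Remark 2.1 and Proposition 2.3, and Lickorish Chapter 8). So your Seifert-matrix argument is necessarily a different, self-contained route; it is in fact the standard one, built on the correct factorization $\det H(\omega)=(\omega^{1/2}-\omega^{-1/2})^{2g}\Delta_K(\omega)$, and parts (1) and (3) are sound as sketched (Rellich branches, sign changes exactly at odd-order vanishing, and the parity bookkeeping all check out).

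Two points need repair. First, in part (2) your key claim that ``$H(\omega)$ stays nonsingular on any open subarc avoiding the zeros of $\Delta_K$'' is false at $\omega=1$: there $H(1)=0$ while $\Delta_K(1)=1$, so $1$ lies in the complementary arc and your eigenvalue-crossing argument says nothing about constancy across it, nor about why the nearby value agrees with the convention $\mathrm{sgn}_K(1)=0$. The fix is short but genuinely needed: write $H(e^{i\theta})=2\sin^2(\theta/2)\,(V+V^{\mathrm{T}})-i\sin\theta\,(V-V^{\mathrm{T}})$; for small $\theta\neq0$ the second term dominates, and $-i(V-V^{\mathrm{T}})$ is a nonsingular Hermitian matrix whose eigenvalues occur in $\pm$ pairs, so $\mathrm{sgn}_K\equiv 0$ near $1$, matching the value at $1$. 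Second, in part (4) the parenthetical hint is not a proof: diagonalizing over $\mathbb{Q}$ only yields the sign relation $\det A=(-1)^{g-\mathrm{sgn}(A)/2}\,|\det A|$ for $A=V+V^{\mathrm{T}}$ of rank $2g$, which reduces the assertion to the congruence $\det A\equiv(-1)^g\bmod 4$; that congruence is a $2$-adic statement which genuinely uses the evenness of $A$ (e.g.\ for $g=1$, $\det A=4ac-b^2\equiv-1\bmod 4$ with $b$ odd) and cannot be extracted from the real diagonalization. Invoking it from the literature is legitimate (it is Murasugi's classical determinant--signature relation, and the paper itself proceeds by citation), but you should either cite it as such or prove it, say by induction splitting off rank-two blocks of an even symmetric form with odd determinant; as written, the hint would mislead a reader into thinking the signature data alone suffices.
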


See Section 2 in Conway's survey \cite{Conway-survey_signature}
(and in particular, Remark 2.1 and Proposition 2.3 therein) 
for the facts listed in Theorem \ref{sgn_function_properties}.
See also \cite[Chapter 8]{Lickorish_book_knot}.

For any knot $K\subset S^3$,
the \emph{signature} (or the \emph{Murasugi signature}) 
of $K$ refers to the even integer
$$\mathrm{sgn}(K)=\mathrm{sgn}_K(-1)=\mathrm{sgn}\left(V+V^{\mathrm{T}}\right).$$
The \emph{determinant} of $K$ refers to the positive odd integer
$$\mathrm{det}(K)=|\Delta_K(-1)|=\left|\mathrm{det}\left(V+V^{\mathrm{T}}\right)\right|.$$

\begin{corollary}\label{sgn_function_properties_corollary}
	For any knot $K\subset S^3$,
	if $\mathrm{det}(K)\equiv 3\bmod 4$, or equivalently,
	if $\mathrm{sgn}(K)\equiv 2\bmod 4$, then
	$\Delta_K$ admits a zero of odd order on the complex unit circle.
\end{corollary}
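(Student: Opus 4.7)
The plan is to deduce the corollary directly from Theorem \ref{sgn_function_properties} by tracking the Levine--Tristram signature function along the complex unit circle. The equivalence of the two hypotheses comes from part (4): since $\mathrm{det}(K) = |\Delta_K(-1)|$ and $\mathrm{sgn}(K) = \mathrm{sgn}_K(-1)$, that part yields $\mathrm{det}(K) \equiv (-1)^{\mathrm{sgn}(K)/2} \pmod 4$, which is $3 \pmod 4$ if and only if $\mathrm{sgn}(K)/2$ is odd, namely $\mathrm{sgn}(K) \equiv 2 \pmod 4$. So I may assume $\mathrm{sgn}(K) \equiv 2 \pmod 4$ and aim to produce an odd-order zero of $\Delta_K$ on the unit circle. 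Note first that neither $1$ nor $-1$ is a zero of $\Delta_K$, because $\Delta_K(1) = 1$ and $|\Delta_K(-1)| = \mathrm{det}(K) \neq 0$.

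Next I would pin down the value of $\mathrm{sgn}_K$ on the open subarcs near $\pm 1$. By part (2), $\mathrm{sgn}_K$ is constant on the open subarc containing $1$; by the convention in (\ref{sgn_omega_def}), this constant equals $\mathrm{sgn}_K(1) = 0$. Similarly, the constant value on the open subarc containing $-1$ equals $\mathrm{sgn}_K(-1) = \mathrm{sgn}(K)$. Traversing the upper open semicircle of $\mathrm{U}(1)$ from a point near $1$ to a point near $-1$, the net change of $\mathrm{sgn}_K$ is $\mathrm{sgn}(K) - 0 \equiv 2 \pmod 4$. By part (3) this net change is also congruent modulo $4$ to $2m$, where $m$ counts the odd-order zeros of $\Delta_K$ on the upper open semicircle (each odd-order zero contributes $\pm 2 \pmod 4$ and each even-order zero contributes $0 \pmod 4$). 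Therefore $m$ is odd, so in particular $m \geq 1$, producing the desired odd-order zero.

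The argument is an elementary congruence count and presents no substantive obstacle once parts (2), (3), (4) of Theorem \ref{sgn_function_properties} are at hand. The only point requiring a moment of attention is the correct value of $\mathrm{sgn}_K$ on the subarc through $\omega = 1$, which one reads off from the convention in (\ref{sgn_omega_def}) together with the local constancy asserted in part (2).
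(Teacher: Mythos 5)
Your proof is correct and follows essentially the same route as the paper, which deduces the corollary from Theorem \ref{sgn_function_properties} by tracking the change of $\mathrm{sgn}_K$ along the upper unit semicircle (value $0$ on the subarc through $1$, value $\mathrm{sgn}(K)$ on the subarc through $-1$, with jumps $\equiv 2 \bmod 4$ only at odd-order zeros). The equivalence of the two hypotheses via part (4) is handled exactly as intended, so nothing further is needed.
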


Corollary \ref{sgn_function_properties_corollary} follows immediately from Theorem \ref{sgn_function_properties},
by analyzing the value change of $\mathrm{sgn}_K$ along the upper complex unit semicircle.

Therefore, our quick application of the main criterion (Corollary \ref{main_deform_corollary})
follows immediately from Corollary \ref{sgn_function_properties_corollary} plus Theorem \ref{main_deform}.

\begin{example}\label{pretzel_example}
	For any $n$--tuple of nonzero integers $q_1,\ldots,q_n$ ($n\geq3$),
	denote by $P(q_1,\ldots,q_n)\subset S^3$ the $n$--pretzel link	
	with tangles of half-twist numbers $q_1,\ldots,q_n$, 
	in the customary notation (see \cite[Section 8]{Kolay_knot}).
	This is a knot if and only if either $n$ is odd and all $q_i$ are odd,
	or exactly one among all $q_i$ is even.
	The determinant of the pretzel knot is the positive odd integer computed by the following formula \cite[Theorem 17]{Kolay_knot}:
	$$\mathrm{det}(P(q_1,\ldots,q_n))=\left|q_1\cdots q_n\cdot\left(\frac{1}{q_1}+\cdots+\frac{1}{q_n}\right)\right|.$$
	The residue modulo $4$ of the determinant
	depends only on the overall sign of $q_1\cdots q_n\cdot(1/q_1+\cdots+1/q_n)$
	and the residual distribution of $q_1,\ldots,q_n$ modulo $4$.
	For example,
	assuming $n$ to be odd and $q_1,\ldots,q_n$ all to be odd and positive,
	it is an easy exercise to check that 
	$\mathrm{det}(P(q_1,\ldots,q_n))$ is congruent to $n$ modulo $4$.
	An $n$--pretzel knot is small 
	(that is, containing no essential closed surfaces in the knot complement)
	if and only if $n=3$ \cite[Corollaries 3 and 4]{Oertel_star}.
\end{example}

The complex zeros of $\Delta_K$ are closely related to 
abelian $\mathrm{SL}(2,\Complex)$--representations of $\pi_1(S^3\setminus K)$
which allow nonabelian deformations.
To review some major results in this direction,
we fix an orientation of $K$.
For simplicity, we consider any $\mathrm{GL}(1,\Complex)$--representation
\begin{equation}\label{ab_GL1C}
\rho_w\colon \pi_1(S^3\setminus K)\to \mathrm{GL}(1,\Complex),
\end{equation}
such that the oriented meridian maps to
the nonzero complex number $w$,
identifying $\mathrm{GL}(1,\Complex)=\Complex^\times=\Complex\setminus\{0\}$.
In view of the canonical subgroup inclusion 
$\mathrm{GL}(1,\Complex)\to \mathrm{SL}(2,\Complex)\colon z\mapsto \mathrm{diag}(z,z^{-1})$,
we also regard $\rho_w$ as an abelian representation 
$\pi_1(S^3\setminus K)\to \mathrm{SL}(2,\Complex)$.

\begin{theorem}\label{deform_nonabelian_facts}
	Let $K\subset S^3$ be a knot.	Fix an orientation of $K$.
	Suppose $w\in\Complex\setminus\{0\}$.	Denote $\rho_w$ as in (\ref{ab_GL1C}).
	Then, the following statements all hold.
	\begin{enumerate}
	\item
	The abelian $\mathrm{SL}(2,\Complex)$--representation $\rho_w$ is the limit of some continuous family
	of nonabelian $\mathrm{SL}(2,\Complex)$--representations,
	only if $w^2$ is a zero of $\Delta_K$.
	\item
	If $w^2$ is a zero of $\Delta_K$,
	then $\rho_w$ is the limit of some continuous family
	of reducible, nonabelian $\mathrm{SL}(2,\Complex)$--representations.
	\item
	If $w^2$ is a simple zero of $\Delta_K$,
	then $\rho_w$ is the limit of some continuous family
	of irreducible $\mathrm{SL}(2,\Complex)$--representations.
	In addition, one may require these 
	to be $\mathrm{SL}(2,\Real)$--representations, if $w\in\Real$;
	one may require these 
	to be $\mathrm{SU}(2)$--representations,
	or to be $\mathrm{SU}(1,1)$--representations, if $|w|=1$.
	\item
	If $w^2$ is a zero of $\Delta_K$ on the complex unit circle,
	such that the values of $\mathrm{sgn}_K$ near $w^2$ 
	are not equal on the different sides of $w^2$,
	then $\rho_w$ is the limit of some continuous family
	of irreducible $\mathrm{SU}(2)$--representations.	
	In particular, 
	the conclusion holds if $w^2$ is an odd-order zero of $\Delta_K$
	on the complex unit circle.
	\end{enumerate}
\end{theorem}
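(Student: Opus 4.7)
The plan is to treat the four assertions uniformly via the Zariski tangent space to the $\mathrm{SL}(2,\Complex)$--representation variety at $\rho_w$. The common preliminary step is to identify this tangent space with the space of $1$--cocycles $Z^1(\pi_1(S^3\setminus K);\mathfrak{sl}(2,\Complex))$ with respect to the adjoint action induced by $\rho_w$. Since $\rho_w$ is diagonal, the adjoint representation splits as $\Complex \oplus \Complex_{w^2} \oplus \Complex_{w^{-2}}$, where the nonzero weight summands control non-abelian infinitesimal deformations and the weight-zero summand only contributes abelian ones.

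For statement (1), I would invoke the classical Alexander-module computation of $H^1(\pi_1(S^3\setminus K);\Complex_\alpha)$ for $\alpha$ the character sending the oriented meridian to $w^{\pm 2}$: this cohomology is non-trivial precisely when $\Delta_K(w^{\pm 2})=0$ (modulo a small correction at $w^{\pm 2}=1$ coming from the peripheral structure). Any non-abelian first-order deformation of $\rho_w$ produces a nontrivial cocycle in at least one of the $\pm 2$--weight summands, and hence forces $\Delta_K(w^2)=0$. For statement (2), conversely, given such a nontrivial cocycle $u$, the upper-triangular assignment whose diagonal recovers $\rho_w$ and whose strictly upper-triangular entry is $s\cdot u(\gamma)$ defines an honest reducible non-abelian representation for each $s\in\Complex$, tending to $\rho_w$ as $s\to 0$.

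For statement (3), the heart of the matter is the vanishing of the quadratic cup-product obstruction when $w^2$ is a simple zero of $\Delta_K$. I would cite the smoothness theorem of Heusener--Porti--Su\'ares-Peir\'o \cite[Corollary 1.4]{HPS_reducible}, refining the earlier Burns--de Rham and Frohman--Klassen arguments, which guarantees a smooth irreducible branch through $\rho_w$ under this hypothesis. The real-form refinements follow by organizing the cocycles and their cup products compatibly with the relevant real or Hermitian structure: real coefficients when $w\in\Real$ yield an $\mathrm{SL}(2,\Real)$--deformation, while a unitary structure when $|w|=1$ yields either an $\mathrm{SU}(2)$-- or an $\mathrm{SU}(1,1)$--deformation according to the signature of the Hermitian form on the tangent cocycles, as carried out in \cite{Frohman--Klassen}.

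Statement (4) is precisely the theorem of Heusener--Kroll \cite[Theorem 1.1]{Heusener--Kroll_abelian}, independently obtained by Herald \cite{Herald_rep}, and I would cite it directly; the additional conclusion under the odd-order hypothesis is then immediate from Theorem \ref{sgn_function_properties}(3). The main obstacle in a fully self-contained presentation lies in (4), whose proof by transverse intersection counting in trace slices is the very strategy that the present paper is adapting to $\mathrm{SL}(2,\Real)$. In the Background section, however, it suffices to point to the literature rather than to reproduce these arguments.
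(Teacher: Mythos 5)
Your proposal is correct and matches the paper's treatment: the paper gives no proof of this background theorem, attributing (1)--(2) to Burde and de Rham, (3) to Heusener--Porti--Su\'ares-Peir\'o (generalizing Frohman--Klassen), and (4) to Heusener--Kroll and Herald, exactly the sources you cite, with the odd-order case of (4) following from Theorem \ref{sgn_function_properties}(3) as you say. Your twisted-cohomology sketch is just the standard mechanism underlying those references (minor slip: ``Burns--de Rham'' should read Burde--de Rham), so nothing essentially differs.
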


The statements (1) and (2) are due to Burde \cite{Burde_deform}, 
and due to de Rham \cite{de_Rham_deform}.
The statement (3) is due to Heusener, Porti, and Su\'ares-Peir\'o \cite{HPS_reducible},
generalizing a former result due to Frohman and Klassen \cite{Frohman--Klassen}.
Note that the subgroup $\mathrm{SU}(1,1)$ is conjugate to $\mathrm{SL}(2,\Real)$
in $\mathrm{SL}(2,\Complex)$, so the $\mathrm{SU}(1,1)$ case is equivalent to saying 
that some $\mathrm{SO}(2)$--representation conjugate to $\rho_w$
can be deformed into irreducible $\mathrm{SL}(2,\Real)$--representations.
The statement (4) is due to Heusener and Kroll \cite{Heusener--Kroll_abelian}, 
and due to Herald \cite{Herald_rep}.

Therefore, our main criterion (Theorem \ref{main_deform}) 
is a generalization of the $\mathrm{SU}(1,1)$ case in Theorem \ref{deform_nonabelian_facts} (3),
and is analogous to the odd-order case in Theorem \ref{deform_nonabelian_facts} (4).

\begin{remark}\label{deform_nonabelian_facts_remark}\
\begin{enumerate}
\item 
All the statements in Theorem \ref{deform_nonabelian_facts} 
hold more generally for any knot in an integral homology $3$--sphere,
(see again the aforementioned references).
Our main criterion (Theorem \ref{main_deform}) 
asserts only for classical knots,
due to technical limitation.
 %(that is, knots in $S^3$), 
To be specific,
our proof of Theorem \ref{main_deform} 
relies substantially on braid presentations.
\item
For any small knot $K\subset S^3$
the analogue of Theorem \ref{deform_nonabelian_facts} holds,
with $\mathrm{SU}(2)$ replacing $\mathrm{SL}(2,\Real)$ therein.
This follows from the recent work of Dunfield and Rasmussen \cite{Dunfield--Rasmussen}
(as the essential content of Corollary 1.5 therein).
\item
For any nontrivial knot $K\subset S^3$, 
$\pi_1(S^3\setminus K)$ always admits 
an irreducible $\mathrm{SU}(2)$--representation.
This is a theorem due to Kronheimer and Mrowka,
proved using gauge-theoretic invariants 
for smooth $4$--manifolds
\cite[Theorem 1]{Kronheimer--Mrowka_surgery}.
Note that the asserted irreducible $\mathrm{SU}(2)$--representation 
does not necessarily arise from an abelian $\mathrm{SU}(2)$--representation by deformation.
Indeed, $\Delta_K$ could have no complex zero at all 
(see \cite[Chapter 6, Corollary 6.16 (iii)]{Lickorish_book_knot}).
Whether $\pi_1(S^3\setminus K)$ always admits 
an irreducible $\mathrm{SL}(2,\Real)$--representation
remains to be an open question (communicated with Nathan Dunfield). 
\end{enumerate}
\end{remark}

\section{Braid presentations}\label{Sec-braid}
%Every knot $K\subset S^3$ can be isotoped into a tubular neighborhood of a unknot
%and as a closed braid therein. 
%This general fact guarantees 
%a braid presentation for any knot group $\pi_1(S^3\setminus K)$.
%Starting from this point, 
%it is possible to set up our problem purely in terms of group theory.

In this section, we recall braid presentations.
We prepare a collection of notations 
(Notation \ref{notation_braid}) for frequent reference in the sequel.
In the literature, 
Lin first uses braid presentations 
to give a new interpretation of the signature of a knot \cite{Lin_invariant}.
Heusener and Kroll generalize the idea
to interpret the Levine--Tristram signature function \cite{Heusener--Kroll_abelian}.

Denote by $\mathscr{B}_n$ the braid group of $n$--strands.
The standard presentation of $\mathscr{B}_n$ 
consists of $n-1$ standard generators $\sigma_1,\ldots,\sigma_{n-1}$,
and the following list of relations: for each $\mu\in\{1,\ldots,n-1\}$,
$\sigma_\mu\sigma_{\mu+1}\sigma_\mu=\sigma_{\mu+1}\sigma_\mu\sigma_{\mu+1}$,
and for each $\nu\in\{\mu+2,\ldots,n-1\}$, $\sigma_\mu\sigma_\nu=\sigma_\nu\sigma_\mu$.
The standard permutation action of $\mathscr{B}_n$ on the set $\{1,\ldots,n\}$
is completely described by the action of each generator $\sigma_\mu$,
as the transposition $(\mu,\mu+1)$.

Denote by $(X_1,\ldots,X_n)$ an alphabet of $n$ letters.
The braid group $\mathscr{B}_n$ 
acts automorphically on the freely generated group $\langle X_1,\ldots,X_n\rangle$,
such that for each $\mu,\nu\in\{1,\ldots,n-1\}$,
$$\sigma_\mu(X_\nu)=\begin{cases} 
X_\mu X_{\mu+1} X_\mu^{-1} & \nu=\mu\\
X_\mu & \nu=\mu+1\\
X_\nu & \mbox{otherwise}
\end{cases}$$
Note that the identity
$$\sigma(X_1\cdots X_n)=X_1\cdots X_n$$ 
holds in $\langle X_1,\ldots,X_n\rangle$ for all $\sigma\in\mathscr{B}_n$.
%
%For any $\sigma\in \mathscr{B}_n$, 
%the $\sigma$--cofixed group $\mathrm{Cofix}_\sigma(F_n)$ 
%refers to the quotient group of $F_n$
%by the normal closure of all the elements of the form $x^{-1}\sigma(x)$, 
%ranging over all $x\in F_n$.
%This is equivalent to expressing
%\begin{equation}\label{braid_presentation}
%\mathrm{Cofix}_\sigma(F_n)=\langle x_1,\ldots,x_n\,|\,x_1=\sigma(x_1),\ldots,x_n=\sigma(x_n)\rangle
%\end{equation}
%We refer to the group presentation 
%appearing on the right-hand side of (\ref{braid_presentation}) 
%as the \emph{braid presentation} associated to $\sigma\in\mathscr{B}_n$
%on the alphabet $(x_1,\ldots,x_n)$.

\begin{notation}\label{notation_braid}
	Denote by $\mathscr{B}_n$ the braid group of $n$--strands.
	\begin{enumerate}
	\item
	For any $\sigma\in\mathscr{B}_n$,	denote
	$$\Pi_\sigma=\langle x_1,\ldots,x_n\,|\,x_1=\sigma(x_1),\ldots,x_n=\sigma(x_n)\rangle.$$
	Here, the generators are treated as distinguished elements in $\Pi_\sigma$,
	and the relations as equations in $\Pi_\sigma$;
	the shorthand notation $\sigma(x_\mu)$ denotes the word $\sigma(X_\mu)$ 
	substituting $(X_1,\ldots,X_n)$ with $(x_1,\ldots,x_n)$.
	\item
	Denote
	$$F_n=\langle x_1,\ldots,x_n\rangle,$$
	and
	$${WF}_n=\langle x_1,\ldots,x_n,y_1,\ldots,y_n\,|\,x_1\cdots x_n=y_1\cdots y_n\rangle.$$
	%Note that $F_n$ is free of rank $n$, and ${WF}_n$ is free of rank $2n-1$.
	\item
	For any $\sigma\in\mathscr{B}_n$,
	denote by %we introduce the group homomorphisms
	$$p_\sigma\colon {WF}_n\to \Pi_\sigma$$
	the group homomorphism assigning $p_\sigma(x_\mu)=x_\mu$ and $p_\sigma(y_\mu)=x_\mu$;
	denote by
	$$r_\sigma\colon {WF}_n\to F_n$$
	the group homomorphism assigning $r_\sigma(x_\mu)=x_\mu$ and $r_\sigma(y_\mu)=\sigma(x_\mu)$
	%Note that $p_\sigma$ and $r_\sigma$ are both surjective.
	\end{enumerate}
\end{notation}

The following lemma describes a general picture
that lies behind Notation \ref{notation_braid}.
It says that
$\mathrm{Hom}(\Pi_n,G)$ can be studied, 
via $p_\sigma^*$,
as the intersection in $\mathrm{Hom}({WF}_n,G)$
of two embeddings of $\mathrm{Hom}(F_n,G)$,
namely, a ``diagonal embedding'' via $r_{\mathrm{id}}^*$,
and a ``graph embedding'' via $r_\sigma^*$.
The special cases with $G$ being 
$\mathrm{SU}(2)$, $\mathrm{SL}(2,\Real)$, and $\mathrm{SL}(2,\Complex)$
are of primary interest to us in the sequel.

\begin{lemma}\label{hom_G_picture}
	Let $G$ be any group. Adopt Notation \ref{notation_braid}.
	\begin{enumerate}
	\item
	For any $\sigma\in\mathscr{B}_n$,
	the maps induced by group homomorphisms
	$$p_\sigma^*\colon\mathrm{Hom}(\Pi_\sigma,G)\to \mathrm{Hom}({WF}_n,G)$$
	and
	$$r_\sigma^*,r_{\mathrm{id}}^*\colon \mathrm{Hom}(F_n,G)\to \mathrm{Hom}({WF}_n,G)$$
	are all injective.
	%\begin{eqnarray*}
	%p^*&\colon&\mathrm{Hom}(\Pi_\sigma,G)\to \mathrm{Hom}({WF}_n,G)\\
	%r_\sigma^*,r_{\mathrm{id}}^*&\colon& \mathrm{Hom}(F_n,G)\to \mathrm{Hom}({WF}_n,G)
	%\end{eqnarray*}
	\item
	The following identity of subsets holds in $\mathrm{Hom}({WF}_n,G)$.
	$$p_\sigma^*(\mathrm{Hom}(\Pi_\sigma,G))=r_\sigma^*(\mathrm{Hom}(F_n,G))\cap r_{\mathrm{id}}^*(\mathrm{Hom}(F_n,G))$$
	\end{enumerate}
\end{lemma}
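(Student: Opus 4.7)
The plan is to reduce both parts of the lemma to elementary statements about factorization of group homomorphisms through surjective quotients. For Part (1), I will verify that each of $p_\sigma$, $r_\sigma$, $r_{\mathrm{id}}$ is a well-defined surjective group homomorphism, from which injectivity of the induced pullback on $\mathrm{Hom}(-,G)$ follows automatically. For Part (2), I will characterize each of the three images in $\mathrm{Hom}(WF_n,G)$ by an explicit system of equations on $\phi(x_\mu)$, $\phi(y_\mu)$, and the value of $\phi$ on the word $\sigma(x_\mu)$ (read as an element of $WF_n$ via the letters $x_i$), and then compare these equation systems.

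For Part (1), the crucial check is well-definedness of $r_\sigma$: the single defining relation $x_1\cdots x_n=y_1\cdots y_n$ of $WF_n$ is sent to $x_1\cdots x_n=\sigma(x_1)\cdots\sigma(x_n)=\sigma(x_1\cdots x_n)=x_1\cdots x_n$ in $F_n$, using the standard fact that each $\sigma\in\mathscr{B}_n$ fixes $x_1\cdots x_n$. The cases of $r_{\mathrm{id}}$ and $p_\sigma$ are similar (or special instances). Surjectivity is evident in each case, because the image already contains the full generating set of the codomain. Injectivity of $f^*\colon\mathrm{Hom}(B,G)\to\mathrm{Hom}(A,G)$ induced by any surjection of groups $f\colon A\to B$ is then immediate, since two homomorphisms that agree after precomposition with a surjection are equal.

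For Part (2), I will use the universal property of the free group $F_n$ (for $r_{\mathrm{id}}^*$ and $r_\sigma^*$) and the universal property of the quotient presentation of $\Pi_\sigma$ (for $p_\sigma^*$) to translate the three factorization conditions into explicit equation systems on a homomorphism $\phi\colon WF_n\to G$. Concretely, $\phi\in r_{\mathrm{id}}^*(\mathrm{Hom}(F_n,G))$ iff $\phi(x_\mu)=\phi(y_\mu)$ for every $\mu$; $\phi\in r_\sigma^*(\mathrm{Hom}(F_n,G))$ iff $\phi(y_\mu)=\phi(\sigma(x_\mu))$ for every $\mu$; and $\phi\in p_\sigma^*(\mathrm{Hom}(\Pi_\sigma,G))$ iff both $\phi(x_\mu)=\phi(y_\mu)$ and $\phi(x_\mu)=\phi(\sigma(x_\mu))$ hold for every $\mu$, the second equation being precisely what is required for the assignment $x_\mu\mapsto \phi(x_\mu)$ to descend from $F_n$ to the quotient $\Pi_\sigma$. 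The conjunction of the first two systems is plainly equivalent to the third, and the claimed equality of subsets follows.

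I do not expect a genuine obstacle; the lemma is essentially a bookkeeping statement about universal properties. The only subtle point is to keep track of how the word $\sigma(x_\mu)$ is interpreted in $F_n$ versus in $WF_n$, which is unambiguous because $\sigma(x_\mu)$ is spelled entirely in the generators $x_i$ shared by both groups.
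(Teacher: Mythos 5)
Your proposal is correct and is essentially the paper's argument: part (1) rests on the surjectivity of $r_{\mathrm{id}}$, $r_\sigma$, $p_\sigma$ (with the same well-definedness check that $\sigma$ fixes $x_1\cdots x_n$), and part (2) amounts to the observation that $p_\sigma=q_\sigma\circ r_{\mathrm{id}}=q_\sigma\circ r_\sigma$ for the quotient map $q_\sigma\colon F_n\to\Pi_\sigma$, which your explicit equation systems $\phi(x_\mu)=\phi(y_\mu)$ and $\phi(y_\mu)=\phi(\sigma(x_\mu))$ simply unpack elementwise. The paper states this more compactly by applying the functor $\mathrm{Hom}(\Box,G)$ to the factorization, but the content is the same.
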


\begin{proof}
	The group homomorphisms $r_{\mathrm{id}},r_\sigma\colon {WF}_n\to F_n$, 
	and $p_\sigma\colon {WF}_n\to \Pi_\sigma$ are all surjective.
	Denote by $q_\sigma\colon F_n\to \Pi_\sigma$ the group homomorphism
	assigning $q_\sigma(x_\mu)=x_\mu$, which is also surjective.
	Observe $p_\sigma=q_\sigma\circ r_{\mathrm{id}}=q_\sigma\circ r_\sigma$.
	Applying the functor $\mathrm{Hom}(\Box,G)$,
	the assertions follow immediately.	
\end{proof}

The presentation of $\Pi_\sigma$ appearing in Notation \ref{notation_braid}
is called a \emph{braid presentation}.
Braid presentations have been frequently used in classical knot theory.
We include a brief proof of the following well-known fact
for the reader's reference.

\begin{lemma}\label{make_braid_presentation}
	For any oriented link $L\subset S^3$, 
	and for any sufficiently large $n$ depending on $L$,
	there exists some braid group element $\sigma\in\mathscr{B}_n$,
	such that	the fundamental group of the link complement
	$\pi_1(S^3\setminus L)$ 
	is isomorphic to the presented group $\Pi_\sigma$ as in Notation \ref{notation_braid},
	and such that the conjugacy class of each generator $x_1,\ldots,x_n$ 
	corresponds to the free homotopy class of the oriented meridian around some component of $L$.
	In this way, the components of $L$ correspond bijectively to 
	the permutation orbits of $\sigma$ acting on $\{1,\ldots,n\}$.
\end{lemma}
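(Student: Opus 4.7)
The approach is classical: realize $L$ as a closed braid and run Seifert--van Kampen on the complement. By Alexander's theorem, there exist some $n_0 \geq 1$ and some $\sigma_0 \in \mathscr{B}_{n_0}$ whose closure $\hat\sigma_0$ is isotopic to $L$ as an oriented link. For any $n \geq n_0$, iterated Markov stabilization $\sigma \mapsto \sigma\, \sigma_n^{\pm 1} \in \mathscr{B}_{n+1}$ produces some $\sigma \in \mathscr{B}_n$ with $\hat\sigma$ still isotopic to $L$, which is exactly what the ``sufficiently large $n$'' clause requires.

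Next, I would place $\hat\sigma$ in a standardly embedded solid torus $V_1 = D^2 \times S^1 \subset S^3$, with the $n$ strands running monotonically once around the $S^1$ factor, and let $V_2 = \overline{S^3 \setminus V_1}$ be the complementary solid torus of the genus-one Heegaard splitting. Then $S^3 \setminus L = M \cup_T V_2$ with $T = \partial V_1$ and $M = V_1 \setminus \hat\sigma$. Treating $\mathscr{B}_n$ as the mapping class group of the $n$-punctured disk $D_n$ rel $\partial D_n$, one identifies $M$ with the mapping torus of a representative of $\sigma$. Basing at a point on $\partial D_n$ and taking $x_1, \ldots, x_n$ to be the standard free generators of $\pi_1(D_n)$ (small oriented loops around the punctures, with the orientations induced from the strand orientations), the mapping-torus structure yields
$$\pi_1(M) = \langle x_1, \ldots, x_n,\, t \,|\, t\, x_\mu\, t^{-1} = \sigma(x_\mu),\ \mu = 1, \ldots, n \rangle,$$
where $t$ represents one traversal of the $S^1$ factor, and $\sigma(x_\mu)$ is the word of Notation \ref{notation_braid}.

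To finish I would verify that, on $T$, the meridian of $V_2$ coincides with the longitudinal curve $\{\ast\} \times S^1$ of $V_1$, which in $\pi_1(M)$ is represented by $t$, while the longitude of $V_2$ is represented by the boundary loop $x_1 \cdots x_n$ of $D_n$. The amalgamated product over $\pi_1(T) = \Integral^2$ therefore imposes only the single relation $t = 1$, giving
$$\pi_1(S^3 \setminus L) = \langle x_1, \ldots, x_n \,|\, x_\mu = \sigma(x_\mu),\ \mu = 1, \ldots, n \rangle = \Pi_\sigma.$$
By construction each $x_\mu$ is a small oriented meridian around the $\mu$-th strand, and two strands lie on the same component of $\hat\sigma$ if and only if they lie in the same orbit of the permutation induced by $\sigma$ on $\{1, \ldots, n\}$; correspondingly the defining relations of $\Pi_\sigma$ conjugate the meridians in any single orbit into the free homotopy class of the meridian of the associated link component. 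The only mild subtlety is the bookkeeping of Heegaard and orientation conventions so that gluing $V_2$ indeed kills $t$ rather than some other curve on $T$; once this is fixed, every other step is standard.
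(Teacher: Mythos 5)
Your proposal is correct and follows essentially the same route as the paper: isotope $L$ to a closed braid, identify the braid complement in the solid torus with the mapping torus of a boundary-fixing diffeomorphism of the $n$-punctured disk, and observe that gluing back the complementary solid torus (Dehn filling) kills the suspension generator $t$, yielding the presentation $\Pi_\sigma$ with the $x_\mu$ as oriented meridians and the components of $L$ matching the permutation orbits of $\sigma$. The only additions beyond the paper's sketch are the explicit appeal to Alexander's theorem and Markov stabilization for the ``sufficiently large $n$'' clause and the more careful van Kampen bookkeeping, both of which are fine.
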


\begin{proof}
	Isotope $L$ into a tubular neighborhood of a unknot as a closed braid therein.
	The monodromy of the braid in solid torus can be regarded as 
	the mapping torus of orientation-preserving, boundary-fixing self-diffeomorphism 
	of a disk with $n$ punctures, where $n$ denotes the number of strands.
	The mapping class (that is, boundary-fixing isotopy class) of the monodromy
	can be regarded as a braid group element $\sigma\in \mathscr{B}_n$.
	The link complment $S^3\setminus L$ homeomorphic to the Dehn filling of 
	the mapping torus such that 
	the suspension flow orbits bound disk in the filling solid torus.
	The construction above yields 
	$\pi_1(S^3\setminus L)\cong\langle x_1,\ldots,x_n,t\,|
	\,t^{-1}x_1t=\sigma(x_1),\ldots,t^{-1}x_nt=\sigma(x_n),t=1\rangle
	\cong \Pi_\sigma$, as asserted.
\end{proof}

We close this section by mentioning a topological interpretation
of the groups appearing in Notation \ref{notation_braid}.
The topological interpretation is not needed in the sequel,
but it may explain where the groups come from.

For any $\sigma\in\mathscr{B}$,
consider a braid $b_\sigma$ (representing $\sigma$) 
as $n$ properly embedded, mutually disjoint strands in $D^3$,
such that the initial endpoints of the strands 
are $n$ (unordered) prescribed points $P^-=\{p^-_1,\ldots,p^-_n\}$ 
on (the southern hemisphere of) $\partial D^3=S^2$,
and the terminal points are another $n$ prescribed points $P^+=\{p^+_1,\ldots,p^+_n\}$ 
on (the northern hemisphere of) $S^2$.
%We assume that the $n$ prescribed initial endpoints 
%lie in the interior of the southern hemisphere $D^2_-$,
%and the $n$ prescribed terminal points lie in the interior of the northern hemisphere $D^2_+$,
%with respect to the decomposition of $S^2=D^2_-\cup_{S^1} D^2_+$ along the equator $S^1$.
Take another copy of $D^3$ containing 
a trivial $n$--strand braid $b_{\mathrm{id}}$ with prescribed endpoints,
and glue it with the copy containing $b_\sigma$ by identifying their boundaries $S^2$.
In the resulting $S^3$,
the union of $b_{\mathrm{id}}$ and $b_{\sigma}$ forms 
a closed braid (as a link) $L=\hat{b}_\sigma$.
By construction, the link complement $S^3\setminus L$
decomposes along the gluing locus $S^2\setminus(P^+\cup P^-)$
into two braid complements 
$D^3\setminus b_\sigma\approx D^3\setminus b_{\mathrm{id}}$.

By fixing a basepoint in $S^2\setminus (P^+\cup P^-)$,
the group ${WF}_n$ in Notation \ref{notation_braid}
can be identified with $\pi_1(S^2\setminus(P^+\cup P^-))$,
such that the generators $x_\mu$ and $y_\mu$ are represented by 
closed paths going around $p^-_\mu$ and $p^+_\mu$,
respectively.
The group $F_n$ can be identified with 
$\pi_1(D^3\setminus b_{\mathrm{id}})\cong\pi_1(D^3\setminus b_{\sigma})$.
The group $\Pi_\sigma$ is identified with $\pi_1(S^3\setminus L)$,
in accordance with the van Kampen theorem.
The group homomorphisms  $r_{\sigma}$, $r_{\mathrm{id}}$, and $p_\sigma$
appearing in Lemma \ref{hom_G_picture} are precisely as induced
by obvious inclusions of topological spaces.

\section{Quaternionic representations}\label{Sec-qreps}
In this section,
we collect preliminary facts regarding 
the spaces of representations in the linear groups 
$\mathrm{SU}(2)$, $\mathrm{SL}(2,\Real)$, and $\mathrm{SL}(2,\Complex)$,
and regarding the spaces of their characters.
For these linear groups, it is possible to take a unified approach, 
via real or complex quaternion algebras.

For a general introduction to quaternion algebras, 
we refer to Maclachlan and Reid's textbook \cite{Maclachlan--Reid_book},
(see Chapters 2, 6, and 7 therein).
We only need to recall a few terms and basic facts for our exposition.

Throughout this section, we work over (commutative) fields of characteristic $0$.

\subsection{Quaternion algebras}
Let $\mathbb{F}$ be a field of characteristic $0$.
In constructive terms, 
a quaternion algebra $\mathscr{A}$ over $\mathbb{F}$
is a $4$--dimensional vector space over $\mathbb{F}$ of the form
$$\mathscr{A}=\mathbb{F}\mathbf{1}\oplus \mathbb{F}\mathbf{i}\oplus\mathbb{F}\mathbf{j}\oplus\mathbb{F}\mathbf{k},$$
such that 
$\mathbf{1}$ is the identity of $\mathscr{A}$,
and such that the (associative) multiplication on $\mathscr{A}$ 
is determined by the rules
$\mathbf{k}=\mathbf{i}\mathbf{j}=-\mathbf{j}\mathbf{i}$,
and $\mathbf{i}^2=a\mathbf{1}$, $\mathbf{j}^2=b\mathbf{1}$,
for some $a,b\in\mathbb{F}^\times$.
We obtain $\mathbf{k}^2=-ab\mathbf{1}$, and 
$-a\mathbf{j}=\mathbf{k}\mathbf{i}=-\mathbf{i}\mathbf{k}$,
and
$-b\mathbf{i}=\mathbf{j}\mathbf{k}=-\mathbf{k}\mathbf{j}$.
A quaternion algebra $\mathscr{A}$ as above is said to have 
Hilbert symbol $(a,b)$ over $\mathbb{F}$,
with respect to the basis $\mathbf{1},\mathbf{i},\mathbf{j},\mathbf{k}$.
A Hilbert symbol of $\mathscr{A}$ depends not only on
the $\mathbb{F}$--algebra isomorphism type of $\mathscr{A}$,
but also on the choice of a basis.

The center of $\mathscr{A}$ is the $\mathbb{F}$--subalgebra $\mathbb{F}\mathbf{1}$,
identified with $\mathbb{F}$ henceforth.
The $\mathbb{F}$--linear direct summand $\mathbb{F}\mathbf{i}\oplus\mathbb{F}\mathbf{j}\oplus\mathbb{F}\mathbf{k}$
can be uniquely characterized, independent of the choices of $\mathbf{i},\mathbf{j},\mathbf{k}$,
by the property that any nonzero element therein is square-central but not central.
Therefore, 
there is a well-defined operation $\mathscr{A}\to\mathscr{A}\colon q\mapsto\bar{q}$,
called the \emph{quaternion conjugation},
as uniquely determined by the expression:
\begin{equation}\label{q_conjugation_def}
\overline{t\mathbf{1}+x\mathbf{i}+y\mathbf{j}+z\mathbf{k}}=t\mathbf{1}-x\mathbf{i}-y\mathbf{j}-z\mathbf{k}
\end{equation}

There are natural functions 
$\mathrm{Nr}\colon \mathscr{A}\to \mathbb{F}$ 
and $\mathrm{Tr}\colon \mathscr{A}\to \mathbb{F}$,
called the \emph{norm} and the \emph{trace}, respectively.
They are defined by the expressions:
\begin{equation}\label{Nr_def}
\mathrm{Nr}(q)=q\bar{q}
\end{equation}
\begin{equation}\label{Tr_def}
\mathrm{Tr}(q)=q+\bar{q}
\end{equation}
%We often drop the subscript if $\mathscr{A}/\mathbb{F}$ is clear from the context.
For any $q\in\mathscr{A}$, the following identity holds in $\mathscr{A}$:
$$q^2-\mathrm{Tr}(q)\,q+\mathrm{Nr}(q)=0.$$
In particular, $q$ is invertible if and only if $\mathrm{Nr}(q)$ is nonzero,
and in that case, $q^{-1}$ is equal to $\mathrm{Nr}(q)^{-1}(\mathrm{Tr}(q)-\bar{q})$.

We introduce the following notations.
Note that $\mathrm{SL}(1,\mathscr{A})$ forms a linear $\mathbb{F}$--algebraic group,
and $\mathfrak{sl}(1,\mathscr{A})$ is the associated $\mathbb{F}$--Lie algebra.
\begin{equation}\label{SL_def}
\mathrm{SL}(1,\mathscr{A})=\{q\in\mathscr{A}\colon \mathrm{Nr}(q)=1\}
\end{equation}
\begin{equation}\label{SL_Lie_def}
\mathfrak{sl}(1,\mathscr{A})=\{q\in\mathscr{A}\colon \mathrm{Tr}(q)=0\}
\end{equation}

%\begin{remark}
	%As a justification for our notation,
	%the linear $\mathbb{F}$--algebraic group $\mathrm{SL}(r,\mathscr{A})$ and 
	%the $\mathbb{F}$--Lie algebra $\mathfrak{sl}(r,\mathscr{A})$ makes sense for any rank $r$.
	%To be precise, denote by $\mathrm{M}_r(\mathscr{A})$ the $\mathbb{F}$--algebra of all square matrices
	%of size $m$ and	with entries in $\mathscr{A}$.
	%The linear $\mathbb{F}$--algebraic group $\mathrm{GL}(r,\mathscr{A})$ 
	%consists of all invertible $B\in\mathrm{M}_r(\mathscr{A})$,
	%with Lie algebra $\mathfrak{gl}(r,\mathscr{A})=\mathrm{M}_r(\mathscr{A})$.
	%Then, 
	%$\mathfrak{sl}(r,\mathscr{A})$ is	$\mathbb{F}$--Lie subalgebra of $\mathfrak{gl}(r,\mathscr{A})$
	%consisting of all the trace $0$ matrices, where trace refers to the sum of $\mathrm{Tr}$ for all the diagonal entries;
	%$\mathrm{SL}(m,\mathscr{A})$ is the $\mathbb{F}$--algebraic subgroup of $\mathrm{GL}(r,\mathscr{A})$
	%corresponding to $\mathfrak{sl}(r,\mathscr{A})$.
%\end{remark}

\begin{example}\label{q_algebra_example}\
There is a unique quaternion algebra over the field of complex numbers $\Complex$,
up to isomorphism of complex algebras.
It is the complex algebra $\mathrm{M}_2(\Complex)$ 
of complex square matrices of size $2$.
(See \cite[Chapter 2, Theorem 2.3.1]{Maclachlan--Reid_book}.)
There are exactly two quaternion algebras over the field of real numbers $\Real$,
up to isomorphism of real algebras.
One is the matrix algebra $\mathrm{M}_2(\Real)$.
The other is the Hamilton quaternion algebra $\mathcal{H}$,
which we identify as the real subalgebra of $\mathrm{M}_2(\Complex)$ 
spanned $\Real$--linearly by the identity matrix and 
the three standard Pauli matrices.
(See \cite[Chapter 2, Theorem 2.5.1]{Maclachlan--Reid_book}.)
In all these cases, the quaternion trace agrees with the matrix trace, 
and the quaternion norm agrees with the matrix determinant.
The corresponding groups $\mathrm{SL}(1,\mathscr{A})$ 
and Lie algebras $\mathfrak{sl}(1,\mathscr{A})$ are summarized as in Table \ref{q_algebra_table}.
\begin{table}[h]
		\bigskip
		\centering
		{%\footnotesize
		\begin{tabular}{|m{1.5cm}|m{1.5cm}|m{3cm}|m{3cm}|}
		\hline
		$\mathbb{F}$ & $\mathscr{A}$ & $\mathrm{SL}(1,\mathscr{A})$ & $\mathfrak{sl}(1,\mathscr{A})$   \\ [0.5ex]
		\hline
		$\Complex$ & $\mathrm{M}_2(\Complex)$  & $\mathrm{SL}(2,\Complex)$ & $\mathfrak{sl}(2,\Complex)$  \\ 
		$\Real$ & $\mathrm{M}_2(\Real)$ &  $\mathrm{SL}(2,\Real)$ & $\mathfrak{sl}(2,\Real)$  \\
		$\Real$  & $\mathcal{H}$ & $\mathrm{SU}(2)$ & $\mathfrak{su}(2)$ \\ [1ex]
		\hline
		\end{tabular}
		}
		\caption{Classification of complex and real quaternion algebras}\label{q_algebra_table}
\end{table}	
\end{example}

For any quaternion algebra $\mathscr{A}$ over any field $\mathbb{F}$ of characteristic $0$,
and for any $\alpha,\beta\in\mathrm{SL}(1,\mathscr{A})$,
it is straightforward to check the following identity in $\mathbb{F}$.
	$$\mathrm{Tr}(\alpha)\mathrm{Tr}(\beta)=\mathrm{Tr}(\alpha\beta)+\mathrm{Tr}(\alpha\beta^{-1})$$
This basic relation generates many more sophisticated ones,
giving rise to a collection of universal trace polynomials.
We provide a formal statement of this fundamental fact, as the following lemma.

\begin{lemma}\label{trace_polynomial}
	For any alphabet $(X_1,\ldots,X_n)$ of $n$ letters,
	and for some sufficiently large $N$ depending on $n$,
	there exists a finite tuple $(y_1,\ldots,y_N)$ of words,
	as elements in the freely generated group $\langle X_1,\ldots,X_n\rangle$,
	such that the following property holds.
	
	For any word $w=w(X_1,\ldots,X_n)$,
	there exists some $N$--variable polynomial $P_w=P_w(T_1,\ldots,T_N)$ over $\Integral$,
	such that 
	for any quaternion algebra $\mathscr{A}$ over	any field $\mathbb{F}$ of characteristic $0$,
	and	for any $N$-tuple $(q_1,\ldots,q_N)$ of elements in $\mathrm{SL}(1,\mathscr{A})$,
	the following identity holds in $\mathbb{F}$.
	$$\mathrm{Tr}(w(q_1,\ldots,q_n))=
	P_w\left(\mathrm{Tr}(y_1(q_1,\ldots,q_n)),\ldots,\mathrm{Tr}(y_N(q_1,\ldots,q_n))\right)$$
\end{lemma}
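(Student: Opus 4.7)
The plan is to designate the family $(y_1,\ldots,y_N)$ to be the $N=2^n$ subset-indexed words
$$y_S = X_{i_1} X_{i_2} \cdots X_{i_k}, \qquad S=\{i_1<\cdots<i_k\}\subseteq\{1,\ldots,n\},$$
with $y_\emptyset=\mathbf{1}$, and to show by induction on the reduced word length $|w|$ that a universal polynomial $P_w\in\Integral[T_1,\ldots,T_N]$ realizes the asserted identity, uniformly over all choices of $\mathscr{A}$ and $\mathbb{F}$.

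The toolkit will consist of five identities, all valid in $\mathrm{SL}(1,\mathscr{A})$ over any characteristic-$0$ field $\mathbb{F}$: first, $\mathrm{Tr}(\mathbf{1})=2$; second, $\mathrm{Tr}(q^{-1})=\mathrm{Tr}(q)$ (from $q^{-1}=\bar q$ when $\mathrm{Nr}(q)=1$); third, cyclic invariance $\mathrm{Tr}(\alpha\beta)=\mathrm{Tr}(\beta\alpha)$ (a direct computation in the quaternion basis using $\overline{\alpha\beta}=\bar\beta\bar\alpha$); fourth, the product identity $\mathrm{Tr}(\alpha)\mathrm{Tr}(\beta)=\mathrm{Tr}(\alpha\beta)+\mathrm{Tr}(\alpha\beta^{-1})$ noted just before the lemma; and fifth, the collapsing relation $X^2=\mathrm{Tr}(X)\cdot X-\mathbf{1}$ coming from the quadratic $q^2-\mathrm{Tr}(q)\,q+\mathrm{Nr}(q)=0$ in the preceding discussion.

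These identities will be deployed in a terminating rewriting system with three basic moves. Move (i) eliminates a leading inverse letter $X_i^{-1}$ from $w=X_i^{-1}u$ (after cyclically rotating so one exists) via
$$\mathrm{Tr}(w) = \mathrm{Tr}(u)\mathrm{Tr}(X_i) - \mathrm{Tr}(uX_i),$$
strictly decreasing the count of inverse letters. Move (ii) collapses an adjacent square $X_j^2$ appearing in a positive word via $X_j^2=\mathrm{Tr}(X_j)X_j-\mathbf{1}$, strictly decreasing the word length. Move (iii) handles a descent $X_a X_b$ with $a>b$ in a positive word by applying the product identity to split $w$ at the descent, which introduces inverse letters to which move (i) subsequently applies. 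When no further move is available, $w$ is either trivial or a positive word with strictly increasing index sequence, hence precisely some $y_S$, and the induction terminates.

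The hard part will be arranging the rewriting so that it provably terminates and yields polynomials involving only the traces of the designated $y_S$, rather than incidental ``non-increasing'' words such as $X_i X_j X_i$ with $i<j$. We plan to resolve this via a double-lexicographic termination order on words (e.g., total length paired with number of inverse letters plus number of descents among positive subwords), verifying that each move strictly descends in this order. The uniformity of $P_w$ across different $(\mathscr{A},\mathbb{F})$ is then automatic, since every identity invoked is provable in any quaternion algebra over any characteristic-$0$ field.
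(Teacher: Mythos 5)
Your toolkit is the right one, and your overall strategy (induction on words via the trace relations) is exactly the Culler--Shalen-style argument the paper itself invokes; moreover the normal form you aim for --- traces of the $2^n$ increasing-index products --- is a true classical refinement (due to Vogt and Horowitz). But as a proof there is a genuine gap: the termination of your rewriting system, which is the entire technical content of the lemma, is only announced (``the hard part will be \ldots we plan to resolve \ldots''), and the measure you propose does not in fact decrease under your moves. Concretely, take $w=X_2X_1$, a positive word with one descent and no inverse letters. Move (iii) gives $\mathrm{Tr}(X_2X_1)=\mathrm{Tr}(X_2)\mathrm{Tr}(X_1)-\mathrm{Tr}(X_2X_1^{-1})$: the residual word has the same length and trades the descent for an inverse letter, so the pair (length, inverses $+$ descents) does not drop; applying move (i) to $X_1^{-1}X_2$ then returns $\mathrm{Tr}(X_2X_1)$ itself, so the moves as literally described can cycle. (The correct step here is cyclic rotation, but that is precisely the bookkeeping your sketch leaves open: linear versus cyclic descents, cyclic reduction after cancellations, and positive words with repeated non-adjacent letters such as $X_1X_2X_1X_3$, which have no square and must shrink only after a cyclic cancellation.) A complete argument needs a more carefully structured induction --- on word length, then on the number of inverse letters, then on a repetition/disorder count, with cyclic reduction built in --- or an explicit derivation of identities such as the one expressing $\mathrm{Tr}(ACB)$ as an integral polynomial in the increasing traces.

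Note also that the lemma does not require your strong normal form: it only asks for the existence of some finite tuple $(y_1,\ldots,y_N)$. The paper's proof, following the argument of Culler--Shalen, stops at products $X_{i_1}\cdots X_{i_r}$ with mutually distinct but not necessarily increasing indices, which avoids your move (iii) altogether and makes the termination of the induction essentially immediate. So either weaken your target to the distinct-index normal form, or carry out the increasing-order reduction in full; as written, the key step is not yet proved.
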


\begin{proof}
	This follows essentially from the same argument as in \cite[Proposition 1.4.1]{Culler--Shalen_variety},
	by repeatedly using trace relations. 
	Due to the slightly different setting and formulation,
	we outline the argument and clarify some key points.
	
	Following the same procedure as in the proof of \cite[Proposition 1.4.1]{Culler--Shalen_variety}, 
	for any finitely generated subgroup 
	$\Gamma$ of $\mathrm{SL}(1,\mathscr{A})$ and for any finite tuple of generators
	$\gamma_1,\ldots,\gamma_n$ of $\Gamma$,
	one can express the trace $\mathrm{Tr}(\theta)$ in $\mathbb{F}$ of any element 
	$\theta\in\Gamma$
	as a polynomial in $\mathrm{Tr}(\gamma_{i_1}\cdots\gamma_{i_r})$ over $\Integral$,
	where $(i_1,\ldots,i_r)$ ranges over 
	all finite tuples with mutually distinct $i_1,\ldots,i_r\in\{1,\ldots,n\}$.
	Moreover, a polynomial $P_w=P_w(T_1,\ldots,T_N)$ 
	with this property can be constructed algorithmically, 
	such that the only input is a free word $w=w(X_1,\ldots,X_n)$
	that realizes $\theta$ as $w(\gamma_1,\ldots,\gamma_n)$ in $\Gamma$.
	Here, $N$ denotes the number of tuples $(i_1,\ldots,i_r)$ as above,
	which is equal to $n!\cdot (1/(n-1)!+\cdots+1/2!+1/1!)$.
	In particular, $P_w$ does not depend on $\mathscr{A}/\mathbb{F}$ or 
	on any relation of the generators in $\Gamma$.
	Take the words $X_{i_1}\cdots X_{i_r}$ in $\langle X_1,\ldots,X_n\rangle$
	ranging over all $(i_1,\ldots,i_r)$ as above,
	and enumerate them as $y_1,\ldots,y_N$.
	The above description shows that the tuple of words $(y_1,\ldots,y_N)$ 
	is as asserted.	
\end{proof}

\begin{remark}
	A more precise statement for the property in Lemma \ref{trace_polynomial} is that
	there exists 
	some computable function $w\mapsto P_w\colon \langle X_1,\ldots,X_n\rangle \to \Integral[T_1,\ldots,T_N]$
	satisfying the asserted identity.
	Here, being computable means that there exists some program (or a Turing machine),
	such that for every valid input $w$, the computation halts with output $P_w$.
	The number $N$ as obtained in the proof is certainly not optimally small in general.
\end{remark}

%
%for any $a,b\in \mathbb{F}^\times$, 
%a quaternion algebra $\mathscr{A}$ over $\mathbb{F}$ with Hilbert symbol %$(a,b/\mathbb{F})$
%$$\left(\frac{a,b}{\mathbb{F}}\right)$$
%is isomorphic to
%$\mathbb{F}\mathbf{1}\oplus \mathbb{F}\mathbf{i}\oplus\mathbb{F}\mathbf{j}\oplus\mathbb{F}\mathbf{k}$
%as a $4$--dimensional vector space over $\mathbb{F}$,
%such that the multiplication on $\mathscr{A}$ 
%is determined by the rules
%$\mathbf{k}=\mathbf{i}\mathbf{j}=-\mathbf{j}\mathbf{i}$,
%and $\mathbf{i}^2=a\mathbf{1}$, $\mathbf{j}^2=b\mathbf{1}$,

\subsection{Spaces of representations}
Let $\mathscr{A}$ be a quaternion algebra over a field $\mathbb{F}$
of characteristic $0$. Let $\Pi$ be a finitely generated group.

A \emph{representation} of $\Pi$ in $\mathrm{SL}(1,\mathscr{A})$
refers to any group homomorphism $\Pi\to \mathrm{SL}(1,\mathscr{A})$.
We say that a representation $\rho\colon\Pi\to\mathrm{SL}(1,\mathscr{A})$
is \emph{irreducible} if the $\mathbb{F}$--subalgebra of $\mathscr{A}$ 
spanned by the image of $\rho$ is $\mathscr{A}$, 
or otherwise \emph{reducible}.
If $\mathbb{F}$ is algebraically closed, 
$\mathscr{A}$ is isomorphic to the $\mathbb{F}$--matrix algebra $\mathrm{M}_2(\mathbb{F})$,
and the irreducibility is equivalent to the usual notion
that $\rho\colon \Pi\to \mathrm{SL}(2,\mathbb{F})$ preserves 
some $1$--dimensional subspace of the $\mathbb{F}$--vector space $\mathbb{F}^2$.

Fix a generating tuple $(g_1,\ldots,g_n)$ of $\Pi$.
We obtain a map of sets
$$\mathrm{Hom}(\Pi,\mathrm{SL}(1,\mathscr{A}))\to \mathscr{A}^n
\colon \rho \mapsto (\rho(g_1),\ldots,\rho(g_n))$$
This map is clearly injective. 
We denote the image of the above map as
$$\mathcal{R}(\Pi,\mathrm{SL}(1,\mathscr{A}))\subset\mathscr{A}^n,$$
and identify the image with $\mathrm{Hom}(\Pi,\mathrm{SL}(1,\mathscr{A}))$,
with respect to the generating tuple.
%$$\mathcal{R}(\Pi,\mathrm{SL}(1,\mathscr{A}))
%=\left\{
%(\rho(g_1),\ldots,\rho(g_n))\in\mathscr{A}^n\colon
%\rho\in\mathrm{Hom}(\Pi,\mathrm{SL}(1,\mathscr{A}))\right\},$$
%with respect to the generating tuple, and viewed as a subset of $\mathscr{A}^n$.
We decorate $\mathcal{R}$ with superscripts $\mathtt{irr}$ or $\mathtt{red}$ 
to denote the subsets of irreducible or reducible representations,
respectively.
Hence, we obtain a partition of sets
$$\mathcal{R}(\Pi,\mathrm{SL}(1,\mathscr{A}))=
\mathcal{R}^{\mathtt{irr}}(\Pi,\mathrm{SL}(1,\mathscr{A}))\sqcup 
\mathcal{R}^{\mathtt{red}}(\Pi,\mathrm{SL}(1,\mathscr{A})).$$
We refer to $\mathcal{R}(\Pi,\mathrm{SL}(1,\mathscr{A}))$ 
as the \emph{space of representations} of $\Pi$ in $\mathrm{SL}(1,\mathscr{A})$.

This space of representations $\mathcal{R}(\Pi,\mathrm{SL}(1,\mathscr{A}))$
is a (Zariski closed) algebraic set 
sitting in the affine $\mathbb{F}$--linear space $\mathscr{A}^n\cong \mathbb{F}^{4n}$.
In fact, $\mathcal{R}(\Pi,\mathrm{SL}(1,\mathscr{A}))$ is 
the set of solutions $(q_1,\ldots,q_n)$ in $\mathscr{A}^n$
to the system of equations $\mathrm{Nr}(q_1)=\cdots=\mathrm{Nr}(q_n)=1$,
and $r(q_1,\ldots,q_n)=\mathbf{1}$, 
ranging over all relators $r=r(X_1,\ldots,X_n)$
for $\Pi$ with respect to the generating tuple.
Fixing a $\mathbb{F}$--linear basis 
$\mathbf{1},\mathbf{i},\mathbf{j},\mathbf{k}$ as before,
these equations reduce to 
a system of polynomial equations with $\Integral$ coefficients
in the coordinates $(t_1,x_1,y_1,z_1,\ldots,t_n,x_n,y_n,z_n)$ in $\mathbb{F}^{4n}$.
For different choices of the generating tuple,
the resulting spaces of representations are 
naturally isomorphic to each other as $\mathbb{F}$--algebraic sets.

The subspace $\mathcal{R}^{\mathtt{red}}(\Pi,\mathrm{SL}(1,\mathscr{A}))$
of reducible representation is Zariski closed
in $\mathcal{R}(\Pi,\mathrm{SL}(1,\mathscr{A}))$.
Hence, $\mathcal{R}^{\mathtt{irr}}(\Pi,\mathrm{SL}(1,\mathscr{A}))$
is Zariski open in $\mathcal{R}(\Pi,\mathrm{SL}(1,\mathscr{A}))$.
In fact, a reducible representation $\rho\colon\Pi\to \mathrm{SL}(1,\mathscr{A})$
can be characterized equivalently by the property
$\mathrm{Tr}(\rho([g,h]))=2$ for all $g,h\in\Pi$,
where $[g,h]$ denotes the commutator $ghg^{-1}h^{-1}$.
This means that the subset $\mathcal{R}^{\mathtt{red}}(\Pi,\mathrm{SL}(1,\mathscr{A}))$
of $\mathcal{R}(\Pi,\mathrm{SL}(1,\mathscr{A}))$ is cut out by a system of equations
$\mathrm{Tr}([u(q_1,\ldots,q_n),v(q_1,\ldots,q_n)])=2$, 
ranging over all words $u=u(X_1,\ldots,X_n)$ and $v=v(X_1,\ldots,X_n)$.
These equations reduce to 
a system of polynomial equations with $\Integral$ coefficients
in the coordinates of $\mathbb{F}^{4n}$.

\subsection{Spaces of characters}
Let $\mathbb{F}$ be a field of characteristic $0$.
Let $\Pi$ be a finitely generated group.

For any quaternion algebra $\mathscr{A}$ over $\mathbb{F}$,
and for any representation $\rho\colon \Pi\to\mathrm{SL}(1,\mathscr{A})$,
the \emph{character} of $\rho$
refers to the function $\chi_\rho\colon \Pi\to \mathbb{F}$
defined by $\chi_\rho(g)=\mathrm{Tr}(\rho(g))$ for all $g$.
The function $\chi_\rho$ is constant on any conjugacy class of $\Pi$.
For any other representation $\rho'\colon\Pi\to\mathrm{SL}(1,\mathscr{A})$
with $\chi_{\rho'}$ identical to $\chi_{\rho'}$,
$\rho'$ is irreducible if and only if $\rho$ is irreducible,
and in this case, $\rho$ is conjugate to $\rho'$ 
by some element in $\mathrm{GL}(1,\mathscr{A})=\mathscr{A}^\times$.
In particular, it makes sense to speak of
\emph{irreducible} characters and \emph{reducible} characters.

Fix a generating tuple $(g_1,\ldots,g_n)$ of $\Pi$.
Fix a tuple of words $(y_1,\ldots,y_N)$ 
as guaranteed in Lemma \ref{trace_polynomial}.
Obtain a tuple $(h_1,\ldots,h_N)$ of elements in $\Pi$,
by setting $h_1=y_1(g_1,\ldots,g_n),\ldots,h_N=y_N(g_1,\ldots,g_n)$.
We obtain a regular map of algebraic sets
$$\mathcal{R}(\Pi,\mathrm{SL}(1,\mathscr{A}))\to \mathbb{F}^N\colon
\rho \mapsto (\chi_\rho(h_1),\ldots,\chi_\rho(h_N))$$
It follows from Lemma \ref{trace_polynomial}
that the value of $\chi_\rho$ at any $g\in\Pi$
is uniquely determined by its values at $h_1,\ldots,h_N$,
implying that the above map is injective on the set of characters,
viewed as functions $\Pi\to\mathbb{F}$.
We denote the image of the above map as 
$$\mathcal{X}(\Pi,\mathrm{SL}(1,\mathscr{A}))\subset \mathbb{F}^N,$$
and decorate $\mathcal{X}$ with superscripts $\mathtt{irr}$ or $\mathtt{red}$ 
for the subsets of irreducible or reducible characters,
respectively.
Hence, we obtain a partition of sets
$$\mathcal{X}(\Pi,\mathrm{SL}(1,\mathscr{A}))=
\mathcal{X}^{\mathtt{irr}}(\Pi,\mathrm{SL}(1,\mathscr{A}))\sqcup 
\mathcal{X}^{\mathtt{red}}(\Pi,\mathrm{SL}(1,\mathscr{A})).$$
We refer to $\mathcal{X}(\Pi,\mathrm{SL}(1,\mathscr{A}))$ 
as the \emph{space of $\mathrm{SL}(1,\mathscr{A})$--characters} for $\Pi$.

In general,
neither $\mathcal{X}(\Pi,\mathrm{SL}(1,\mathscr{A}))$ nor $\mathcal{X}^{\mathtt{red}}(\Pi,\mathrm{SL}(1,\mathscr{A}))$
is an algebraic subset of $\mathbb{F}^N$.
%This is a closed algebraic set in the affine $\mathbb{F}$--linear space 
%$\mathbb{F}^{N}$.
%In fact, $\mathcal{R}(\Pi,\mathrm{SL}(1,\mathscr{A}))$ is 
Instead, 
we can construct an algebraic set $\mathcal{Z}(\Pi,\mathbb{F})$ in $\mathbb{F}^N$
and a Zariski closed subset $\mathcal{Z}^{\mathtt{red}}(\Pi,\mathbb{F})$,
using the $\Integral$--polynomials $P_w$ as guaranteed in Lemma \ref{trace_polynomial}.
The algebraic set 
$$\mathcal{Z}(\Pi,\mathbb{F})\subset \mathbb{F}^N$$
is defined as the locus cut out by the system of equations
$P_r(\chi_\rho(h_1),\ldots,\chi_\rho(h_N))=2$,
ranging over all relators $r=r(X_1,\ldots,X_n)$ for $\Pi$
with respect to the generating tuple.
The Zariski closed subset $\mathcal{Z}^{\mathtt{red}}(\Pi,\mathbb{F})$
is cut out by the extra equations
$P_{[u,v]}(\chi_\rho(h_1),\ldots,\chi_\rho(h_N))=2$,
ranging over all commutators $[u,v]$ of words 
$u=u(X_1,\ldots,X_n)$ and $v=v(X_1,\ldots,X_n)$.
Denote by $\mathcal{Z}^{\mathtt{irr}}(\Pi,\mathbb{F})$
the complement of $\mathcal{Z}^{\mathtt{red}}(\Pi,\mathbb{F})$.
Therefore, we obtain a partition of an algebraic set in $\mathbb{F}^N$
into a Zariski closed subset and a Zariski open subset,
$$\mathcal{Z}(\Pi,\mathbb{F})=
\mathcal{Z}^{\mathtt{irr}}(\Pi,\mathbb{F})\sqcup \mathcal{Z}^{\mathtt{red}}(\Pi,\mathbb{F}).$$
We refer to $\mathcal{Z}(\Pi,\mathbb{F})$ 
as the \emph{space of $\mathbb{F}$--quaternion characters} for $\Pi$.

Every point in $\mathcal{Z}(\Pi,\mathbb{F})$ can be realized
as the character $\chi_\rho$ of some representation $\rho\colon \Pi\to \mathrm{SL}(1,\mathscr{A})$,
for some quaternion algebra $\mathscr{A}$ over $\mathbb{F}$.
In fact, for any algebraically closed field $\Omega$ of characteristic $0$,
any quaternion algebra over $\Omega$ is isomorphic to the matrix algebra $\mathrm{M}_2(\Omega)$.
The characterization $\mathcal{Z}(\Pi,\Omega)=\mathcal{X}^{\mathtt{irr}}(\Pi,\mathrm{SL}(2,\Omega))$
is proved in \cite[Section 1, Corollary 1.4.4]{Culler--Shalen_variety} for $\Omega=\Complex$,
and the proof works verbatim for any algebraically closed field of characteristic $0$.
In general, take $\Omega$ to be an algebraic closure of $\mathbb{F}$.
Given any point in $\mathcal{Z}(\Pi,\mathbb{F})$,
we can realize it as the character of some representation
$\rho'\colon \Pi\to \mathrm{SL}(2,\Omega)$;
if the point lies in $\mathcal{Z}^{\mathtt{red}}(\Pi,\mathbb{F})$,
we can choose $\rho'$ to be abelian.
Then, the image of $\rho'$ is contained in a quaternion algebra $\mathscr{A}'$
over the trace field $\mathbb{F}'$ of $\rho'$.
To be precise,
$\mathbb{F}'$ is equal to the subfield 
$\Rational(\mathrm{tr}(\rho'(h_1)),\ldots,\mathrm{tr}(\rho'(h_N)))$ in $\Omega$,
which is a subfield of $\mathbb{F}$;
for any irreducible $\rho'$,
the subalgebra $\mathbb{F}'[\rho'(g_1),\ldots,\rho'(g_n)]$ 
in $\mathrm{M}_2(\Omega)$ is the unique quaternion algebra $\mathscr{A}'$ over $\mathbb{F}'$
as claimed; if $\rho'$ is abelian, the subalgebra 
$\mathbb{F}'[\rho'(g_1),\ldots,\rho'(g_n)]$ in $\mathrm{M}_2(\Omega)$ 
is actually a field extension over $\mathbb{F}'$ of degree $1$ or $2$,
so it can be further extended to be a quaternion algebra $\mathscr{A}'$ over $\mathbb{F}'$,
as claimed.
Take $\mathscr{A}=\mathscr{A}\otimes_{\mathbb{F}'}\mathbb{F}$
and obtain the scalar extension of $\rho'$
as $\rho\colon \Pi\to \mathrm{SL}(1,\mathscr{A})$,
then $(\mathscr{A},\rho)$ realizes the given point as desired.

For any point in $\mathcal{Z}^{\mathtt{irr}}(\Pi,\mathbb{F})$,
the pairs $(\mathscr{A},\rho)$ to realize the point
are conjugate to each other in $\mathrm{GL}(2,\Omega)$,
where $\Omega$ is an algebraic closure of $\mathbb{F}$.
It follows that the $\mathbb{F}$--algebra isomorphism type of $\mathscr{A}$
is uniquely determined for points in $\mathcal{Z}^{\mathtt{irr}}(\Pi,\mathbb{F})$.
Therefore, we have a partition of sets
$$\mathcal{Z}^{\mathtt{irr}}(\Pi,\mathbb{F})=
\bigsqcup_{[\mathscr{A}/\mathbb{F}]} \mathcal{X}^{\mathtt{irr}}(\Pi,\mathrm{SL}(1,\mathscr{A})),$$
where the disjoint union ranges over all isomorphism classes
of quaternion algebras $\mathscr{A}$ over $\mathbb{F}$.

\begin{example}\label{example_Z_concrete}
For a free group $F_2$ of rank $2$, we can take $N=3$.
\begin{enumerate}
%\item 
%Let $\Pi$ be any finitely generated group.
%The space of complex quaternion characters
%$\mathcal{Z}(\Pi,\Complex)\subset \Complex^N$ is equal to 
%$\mathcal{X}(\Pi,\mathrm{SL}(2,\Complex))$.
%The space of real quaternion characters
%$\mathcal{Z}(\Pi,\Real)$ consists of
%all the real points of $\mathcal{Z}(\Pi,\Complex)$, 
%(that is, the intersection in $\Complex^N$ with $\Real^N$);
%$\mathcal{Z}^{\mathtt{irr}}(\Pi,\Real)$ 
%is the disjoint union of subsets
%$\mathcal{X}^{\mathtt{irr}}(\Pi,\mathrm{SL}(2,\Real))$ and 
%$\mathcal{X}^{\mathtt{irr}}(\Pi,\mathrm{SU}(2))$.
%Both $\mathcal{X}^{\mathtt{irr}}(\Pi,\mathrm{SL}(2,\Real))$ and 
%$\mathcal{X}^{\mathtt{irr}}(\Pi,\mathrm{SU}(2))$ are open subsets of $\mathcal{Z}(\Pi,\Real)$,
%with respect to the (usual) analytic topology of $\Real^N$.
%Moreover, $\mathcal{X}^{\mathtt{irr}}(\Pi,\mathrm{SU}(2))$ 
%is bounded within the $N$-box region $[-2,2]^N$.
\item
Direct computation shows $\mathcal{Z}(F_2,\Real)=\Real^3$.
Moreover, it can be figured out that 
the reducible real quaternion characters of $F_2$ are precisely
the points $(x,y,z)\in\Real^3\colon x^2+y^2+z^2-xyz=4$.
%In fact, the equation is derived from 
%the necessary condition $\chi_\rho([g,h])=2$ of reducibility,
%using the trace relation 
%$\mathrm{Tr}([\alpha,\beta])=
%\mathrm{Tr}(\alpha)^2+\mathrm{Tr}(\beta)^2+\mathrm{Tr}(\alpha\beta)^2
%-\mathrm{Tr}(\alpha)\mathrm{Tr}(\beta)\mathrm(\alpha\beta)-2$.
%On the other hand, every real solution to the equation 
%can be realized by some diagonal representation in 
%$\mathrm{SL}(2,\Real)$ or $\mathrm{SU}(2)$.
The equation defines a real algebraic surface with four singular points,
roughly looking like a tetrahedron with four extra cones emanating from the vertices
opposite to the corners, smoothing out all edges.
It divides $\Real^3$ into six complementary connected components.
Five of them are unbounded, 
all consisting of irreducible $\mathrm{SL}(2,\Real)$--characters.
The only bounded one consists of irreducible $\mathrm{SU}(2)$--characters.
\item
There are infinitely many isomorphically distinct quaternion algebras $A$ over $\Rational$,
which can be completely classified by their places of ramification,
(see \cite[Chapter 7, Theorem 7.3.6]{Maclachlan--Reid_book}).
Every rational point of $\mathcal{Z}^{\mathtt{irr}}(F_2,\Real)$
is the character of some $\mathrm{SL}(1,A)$--representation for some $A$.
If $A\otimes_\Rational\Real\cong \mathrm{M}_2(\Real)$, 
then $\mathrm{SL}(1,A)$ is dense in $\mathrm{SL}(2,\Real)$,
implying that the irreducible $\mathrm{SL}(1,A)$--characters
form a dense subset of rational points in $\mathcal{X}^{\mathtt{irr}}(F_2,\mathrm{SL}(2,\Real))$.
Similarly, if $A\otimes_\Rational\Real\cong \mathcal{H}$,
then the irreducible $\mathrm{SL}(1,A)$--characters
form a dense subset of rational points in $\mathcal{X}^{\mathtt{irr}}(F_2,\mathrm{SU}(2))$.
%In particular, $\mathcal{X}(F_2,\mathrm{SL}(1,A))$ cannot be an algebraic set for any $A$,
%since its Zariski closure stays identical for all $A$.
\end{enumerate}
\end{example}

\subsection{The analytic topology}\label{Subsec_analytic}
We collect some significant facts pertaining to the analytic topology
on the spaces of representations and characters in 
$\mathrm{SL}(2,\Complex)$, and $\mathrm{SL}(2,\Real)$, and $\mathrm{SU}(2)$.

Assume $\Pi$ to be any finitely generated group.
The space of $\mathrm{SL}(2,\Complex)$--characters
$\mathcal{X}(\Pi,\mathrm{SL}(2,\Complex))$ is equal to 
the space of complex characters
$\mathcal{Z}(\Pi,\Complex)\subset\Complex^N$.
This is exactly the spaces of $\mathrm{SL}(2,\Complex)$--characters as introduced by 
Culler and Shalen \cite{Culler--Shalen_variety}.
The intersection of $\mathcal{Z}(\Pi,\Complex)$
with the closed subset $\Real^N$ in $\Complex^N$
can be identified with the space of real quaternions $\mathcal{Z}(\Pi,\Real)$,
which is the union of closed subsets
$\mathcal{X}(\Pi,\mathrm{SL}(2,\Real))$ and $\mathcal{X}(\Pi,\mathrm{SU}(2))$.
The closed subset $\mathcal{X}(\Pi,\mathrm{SU}(2))$ is always contained
in the $N$--dimensional box region $[-2,2]^N$, and hence always compact.
The closed subset $\mathcal{X}(\Pi,\mathrm{SL}(2,\Real))$ is not compact in general.
The open subset $\mathcal{Z}^{\mathtt{irr}}(\Pi,\Real)$ of $\mathcal{Z}(\Pi,\Real)$
is the topological disjoint union of open subsets
$\mathcal{X}^{\mathtt{irr}}(\Pi,\mathrm{SL}(2,\Real))$ and 
$\mathcal{X}^{\mathtt{irr}}(\Pi,\mathrm{SU}(2))$.

In the following facts, $G$ refers to any linear group among
$\mathrm{SL}(2,\Real)$, $\mathrm{SU}(2)$, and $\mathrm{SL}(2,\Complex)$.

The topological space $\mathcal{X}(\Pi,G)$ has finitely many connected components.
This follows from the fact that $\mathcal{X}(\Pi,G)$ is 
the image of the real algebraic set $\mathcal{R}(\Pi,G)$ under a continuous map, 
and a theorem due to Whitney regarding real algebraic sets \cite[Theorem 3]{Whitney_real}.
The finiteness of connected components also holds
for $\mathcal{X}^{\mathtt{irr}}(\Pi,G)$ (see \cite[Theorem 4]{Whitney_real}).

The topological space pair 
$(\mathcal{X}(\Pi,G),\mathcal{X}^{\mathtt{red}}(\Pi,G))$
is triangulable.
Namely, it is homeomorphic to 
a pair of a locally finite simplicial complex and a subcomplex.
In fact, the pairs $(\mathcal{Z}(\Pi,\Real),\mathcal{Z}^{\mathtt{red}}(\Pi,\Real))$
and $(\mathcal{Z}(\Pi,\Complex),\mathcal{Z}^{\mathtt{red}}(\Pi,\Complex))$
are both triangulable, by a theorem due to Hardt regarding analytic sets \cite[Theorem 3]{Hardt_triangulation}.
The sub-pair $(\mathcal{X}(\Pi,G),\mathcal{X}^{\mathtt{red}}(\Pi,G))$ inherits a triangulation
either from $(\mathcal{Z}(\Pi,\Real),\mathcal{Z}^{\mathtt{red}}(\Pi,\Real))$, if $G$ is $\mathrm{SL}(2,\Real)$ or $\mathrm{SU}(2)$,
or from $(\mathcal{Z}(\Pi,\Complex),\mathcal{Z}^{\mathtt{red}}(\Pi,\Complex))$, if $G$ is $\mathrm{SL}(2,\Complex)$.

The natural continuous map $\mathcal{R}(\Pi,G)\to \mathcal{X}(\Pi,G)\colon \rho\mapsto\chi_\rho$
satisfies the weak path lifting property.
Namely, every continuous path $[0,1]\to \mathcal{X}(\Pi,G)$
lifts to some path $[0,1]\to \mathcal{R}(\Pi,G)$.
This is a special case of a more general fact 
regarding real reductive Lie groups, 
as pointed out by Biswas, Lawton, and Ramras \cite[Lemma 2.1]{BLR_character}.
Moreover, restricted to the irreducibles,
$\mathcal{R}^{\mathtt{irr}}(\Pi,G)\to \mathcal{X}^{\mathtt{irr}}(\Pi,G)$
is a principal bundle projection with structure group $G/\{\pm\mathbf{1}\}$,
so the strong path lifting property holds in this case.
Namely, 
any base point lift $\{0\}\to \mathcal{R}^{\mathtt{irr}}(\Pi,G)$
of any continuous path $[0,1]\to \mathcal{X}^{\mathtt{irr}}(\Pi,G)$
extends to some path lift $[0,1]\to \mathcal{R}^{\mathtt{irr}}(\Pi,G)$.

%\begin{example}\label{example_Z_fg}
%Let $\Pi$ be any finitely generated group.
%The space of complex quaternion characters
%$\mathcal{Z}(\Pi,\Complex)\subset \Complex^N$ is equal to 
%$\mathcal{X}(\Pi,\mathrm{SL}(2,\Complex))$.
%The space of real quaternion characters
%$\mathcal{Z}(\Pi,\Real)$ consists of
%all the real points of $\mathcal{Z}(\Pi,\Complex)$, 
%(that is, the intersection in $\Complex^N$ with $\Real^N$);
%$\mathcal{Z}^{\mathtt{irr}}(\Pi,\Real)$ 
%is the disjoint union of subsets
%$\mathcal{X}^{\mathtt{irr}}(\Pi,\mathrm{SL}(2,\Real))$ and 
%$\mathcal{X}^{\mathtt{irr}}(\Pi,\mathrm{SU}(2))$.
%Both $\mathcal{X}^{\mathtt{irr}}(\Pi,\mathrm{SL}(2,\Real))$ and 
%$\mathcal{X}^{\mathtt{irr}}(\Pi,\mathrm{SU}(2))$ are open subsets of $\mathcal{Z}(\Pi,\Real)$,
%with respect to the (usual) analytic topology of $\Real^N$.
%Moreover, $\mathcal{X}^{\mathtt{irr}}(\Pi,\mathrm{SU}(2))$ 
%is always contained in the $N$--box region $[-2,2]^N$.
%\end{example}

\section{Trace slices}\label{Sec-trace_slices}
In this section, we introduce trace slices
of types $\mathrm{SL}(2,\Complex)$, $\mathrm{SL}(2,\Real)$, and $\mathrm{SU}(2)$.
We analyze their properties from the unified perspective of quaternion algebras,
generalizing prototypical results in the $\mathrm{SU}(2)$ case \cite{Heusener--Kroll_abelian,Lin_invariant}.
We also consider parametric trace slices, allowing varying trace value.
In the proof of Theorem \ref{main_deform} in Section \ref{Sec-deform},
the parametric version are important 
for analyzing change of 
the local modulo $2$ intersection numbers (see Lemma \ref{half_signature_parity}).

\begin{notation}\label{notation_slice}
	Denote 
	${WF}_n=\langle x_1,\ldots,x_n,y_1,\ldots,y_n\,|\,x_1\cdots x_n=y_1\cdots y_n\rangle$,
	and
	$F_n=\langle x_1,\ldots,x_n\rangle$,
	as presented groups.
	\begin{enumerate}
	\item
	For any $\tau\in\Complex$ if $G$ is $\mathrm{SL}(2,\Complex)$, 
	or for any $\tau\in\Real$ if $G$ is	$\mathrm{SL}(2,\Real)$ or $\mathrm{SU}(2)$,
	the \emph{trace $\tau$--slice of $G$--representations} for $WF_n$ refers to the set
	$$\mathcal{R}_\tau({WF}_n,G)
	=
	\left\{
	\rho\in\mathcal{R}({WF}_n,G)
	\middle| 
	\begin{array}{l}
	\mathrm{tr}(\rho(x_\mu))=\mathrm{tr}(\rho(y_\mu))=\tau,\\
	%\mbox{for all }\mu\in\{1,\ldots,n\}\\
	\mbox{for all }\mu=1,\ldots,n
	\end{array}
	\right\}.$$
	where $\mathrm{tr}$ denotes the usual trace of a square matrix.	
	The \emph{trace $\tau$--slice of $G$--characters},
	denoted as $\mathcal{X}_\tau(WF_n,G)$,
	the image of $\mathcal{R}_\tau(WF_n,G)$ in the space of $G$--characters 
	under the natural map $\rho\mapsto\chi_\rho$.
	We decorate $\mathcal{R}_\tau$ or $\mathcal{X}_\tau$ with superscripts 
	$\mathtt{irr}$ or $\mathtt{red}$ 
	to denote the subsets of irreducible or reducible
	representations or characters.
	We view these spaces as equipped with the analytic topology.	
	\item
	More generally,
	for any subset $T\subset \Complex$ or $T\subset \Real$, accordingly, 
	we introduce the \emph{parametric trace $T$--slice of $G$--representations} as
	$$
	\mathcal{R}_{T}(WF_n,G) 
	=\bigcup_{\tau\in T} \mathcal{R}_\tau(WF_n,G),$$
	viewed as a topological subspace of $\mathcal{R}(WF_n,G)$.
	The \emph{parametrization map}
	refers to the continuous map 
	$$\mathcal{R}_T(WF_n,G)\to T,$$
	such that each trace $\tau$--slice projects to the trace value $\tau$.
	Similarly, we introduce $\mathcal{X}_{T}(WF_n,G)$ and 
	the decorated notations with superscripts $\mathtt{irr}$ or $\mathtt{red}$.	
	\item	
	For any $(G,\tau)$ as above, we introduce	
	$$\mathcal{R}_\tau(F_n,G)
	=
	\left\{\rho\in\mathcal{R}(F_n,G)
	\middle| 
	%\mathrm{tr}(\rho(x_1))=\cdots=\mathrm{tr}(\rho(x_n))=\tau
	\begin{array}{l}
	\mathrm{tr}(\rho(x_\mu))=\tau,\\
	%\mbox{for all }\mu\in\{1,\ldots,n\}
	\mbox{for all }\mu=1,\ldots,n
	\end{array}
	\right\},$$
	Similarly,
	we introduce 
	$\mathcal{X}_\tau(F_n,G)$, and the decorated notations for $F_n$.
	For any $(G,T)$ as above,
	we also introduce the parametric versions of trace slices for $F_n$.
	\end{enumerate}
\end{notation}

\begin{lemma}\label{slice_regularity_F}
	Adopt Notation \ref{notation_slice}.
	\begin{enumerate}
	\item
	For $G$ being $\mathrm{SL}(2,\Complex)$, and for any complex $\tau\neq\pm2$,
	$\mathcal{X}^{\mathtt{irr}}_\tau(F_n,G)$ is a smooth manifold
	of dimension $4n-6$.
	\item
	For $G$ being $\mathrm{SL}(2,\Real)$ or $\mathrm{SU}(2)$, and for any real $\tau\neq\pm2$,
	$\mathcal{X}^{\mathtt{irr}}_\tau(F_n,G)$ is a smooth manifold
	of dimension $2n-3$.
	\end{enumerate}
\end{lemma}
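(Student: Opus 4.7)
The plan is to realize $\mathcal{X}^{\mathtt{irr}}_\tau(F_n, G)$ as the base of a principal bundle whose total space is a transverse intersection of trace level sets inside $\mathcal{R}(F_n, G) \cong G^n$. Since $F_n$ is free on $x_1, \ldots, x_n$, the evaluation map $\rho \mapsto (\rho(x_1), \ldots, \rho(x_n))$ is a canonical identification, endowing $\mathcal{R}(F_n, G)$ with the structure of a smooth manifold of real dimension $3n$ when $G$ is $\mathrm{SL}(2, \Real)$ or $\mathrm{SU}(2)$, and of real dimension $6n$ (complex dimension $3n$) when $G = \mathrm{SL}(2, \Complex)$.

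Next, I would consider the trace map $\mathrm{T} \colon G^n \to \Field^n$ defined by $(g_1, \ldots, g_n) \mapsto (\mathrm{tr}(g_1), \ldots, \mathrm{tr}(g_n))$, where $\Field = \Real$ or $\Complex$ accordingly. Because the $n$ component functions depend on pairwise disjoint factors, it suffices to show that the single-factor trace $\mathrm{tr} \colon G \to \Field$ is a submersion away from the two central elements $\pm \mathbf{1}$. This is a direct computation: at $g \in G$, left-translating tangent vectors by $g^{-1}$ identifies the differential of $\mathrm{tr}$ with the linear functional $X \mapsto \mathrm{tr}(gX)$ on $\mathfrak{sl}(2,\Field)$, and one checks that this functional vanishes identically only when $g = \pm \mathbf{1}$. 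Since the hypothesis $\tau \neq \pm 2$ forces each $g_\mu$ with $\mathrm{tr}(g_\mu) = \tau$ to be non-central, the regular value theorem yields that $\mathcal{R}_\tau(F_n, G) = \mathrm{T}^{-1}(\tau, \ldots, \tau)$ is a smooth submanifold of $G^n$ of codimension $n$ over $\Field$, hence of dimension $2n$ over $\Field$. Restricting to the open subset $\mathcal{R}^{\mathtt{irr}}_\tau(F_n, G)$ of irreducibles preserves smoothness and dimension.

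Finally, by the principal bundle property recalled in Subsection \ref{Subsec_analytic}, the character map $\mathcal{R}^{\mathtt{irr}}(F_n, G) \to \mathcal{X}^{\mathtt{irr}}(F_n, G)$ is a principal $G/\{\pm \mathbf{1}\}$--bundle. Since conjugation by $G$ preserves every trace function, this bundle structure restricts to a principal $G/\{\pm\mathbf{1}\}$--bundle $\mathcal{R}^{\mathtt{irr}}_\tau(F_n, G) \to \mathcal{X}^{\mathtt{irr}}_\tau(F_n, G)$, so the base inherits the structure of a smooth manifold of $\Field$--dimension $2n - 3$. This yields real dimension $4n - 6$ when $G = \mathrm{SL}(2, \Complex)$ and real dimension $2n - 3$ when $G$ is $\mathrm{SL}(2, \Real)$ or $\mathrm{SU}(2)$, as asserted. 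I expect no substantial obstacle in carrying out this plan; the only non-formal ingredient is the elementary submersion check for the trace on $\mathrm{SL}(2,\Field)$, which is what makes the hypothesis $\tau \neq \pm 2$ essential.
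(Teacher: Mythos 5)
Your proposal is correct and follows essentially the same route as the paper: identify $\mathcal{R}_\tau(F_n,G)$ with $G_\tau^n$, where $G_\tau=\{A\in G:\mathrm{tr}(A)=\tau\}$ is smooth of codimension $\delta_G/3$ for $\tau\neq\pm2$, and then divide out the $G/\{\pm\mathbf{1}\}$--bundle projection onto the irreducible character slice to get dimension $(2n-3)\cdot\delta_G/3$. The only cosmetic difference is that you establish smoothness of $G_\tau$ via the regular-value argument for the trace (the functional $X\mapsto\mathrm{tr}(gX)$ vanishing only at $g=\pm\mathbf{1}$), whereas the paper describes $G_\tau$ as one conjugacy class or an inverse pair of classes with centralizer stabilizer; both give the same dimension count and neither affects the rest of the argument.
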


\begin{proof}
	Denote by $\delta_G$ the dimension of any real Lie group $G$.
	Namely, 
	$\delta_G=6$ if $G$ is $\mathrm{SL}(2,\Complex)$, 
	or $\delta_G=3$ if $G$ is $\mathrm{SL}(2,\Real)$ or $\mathrm{SU}(2)$.
	Denote by 
	$$G_\tau=\{A\in G\colon\mathrm{tr}(A)=\tau\}$$
	the trace $\tau$--slice of $G$, 
	which is invariant under the conjugation action of $G$ 
	and under the inverse.
	For any $\tau\neq\pm2$,
	$G_\tau$ comprises either a unique conjugacy class
	(in $\mathrm{SL}(2,\Complex)$, or in $\mathrm{SL}(2,\Real)$ of hyperbolic type, or in $\mathrm{SU}(2)$),
	or an inverse pair of conjugacy classes 
	(in $\mathrm{SL}(2,\Real)$ of elliptic type);
	the stabilizer of any $A\in G_\tau$ is the centralizer $Z_G(A)$ of $A$ in $G$,
	which is a closed Lie subgroup of dimension $\delta_G/3$.
	In particular,
	$G_\tau$ is	a properly embedded smooth submanifold in $G$ of dimension $2\delta_G/3$.	
	
	Since $F_n$ is freely generated by $x_1,\ldots,x_n$,
	$\mathcal{R}_\tau(F_n,G)$ 
	can be identified obviously with the $n$--fold cartesian product $G_\tau^n$.
	Restricted to the irreducibles, the natural map 
	$\mathcal{R}^{\mathtt{irr}}_\tau(F_n,G)\to\mathcal{X}^{\mathtt{irr}}_\tau(F_n,G)$
	is a bundle projection with fibers diffeomorphic to $G/\{\pm \mathbf{1}\}$.
	It follows that $\mathcal{X}^{\mathtt{irr}}_\tau(F_n,G)$ is a smooth manifold
	of dimension $n\cdot2\delta_G/3-\delta_G=(2n-3)\cdot\delta_G/3$.
\end{proof}

%\begin{lemma}\label{slice_regularity_WF}
	%Adopt Notation \ref{notation_slice}.
	%Denote by $\delta_G$ the dimension of $G$ as a real Lie group.
	%
	%If $\tau\neq\pm2$,
	%then the trace $\tau$--slice of irreducible $G$--characters
	%$\mathcal{X}^{\mathtt{irr}}_\tau({WF}_n,G)$ 
	%is a smooth manifold of dimension $(4n-6)\cdot\delta_G/3$.
%\end{lemma}

\begin{lemma}\label{slice_regularity_WF}
	Adopt Notation \ref{notation_slice}.
	\begin{enumerate}
	\item
	For $G$ being $\mathrm{SL}(2,\Complex)$, and for any complex $\tau\neq\pm2$,
	$\mathcal{X}^{\mathtt{irr}}_\tau({WF}_n,G)$ is a smooth manifold
	of dimension $8n-12$.
	\item
	For $G$ being $\mathrm{SL}(2,\Real)$ or $\mathrm{SU}(2)$, and for any real $\tau\neq\pm2$,
	$\mathcal{X}^{\mathtt{irr}}_\tau(WF_n,G)$ is a smooth manifold
	of dimension $4n-6$.
	\end{enumerate}
\end{lemma}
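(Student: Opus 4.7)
The plan is to present $\mathcal{R}_\tau(WF_n, G)$ as a fiber product over $G$ of two copies of $\mathcal{R}_\tau(F_n, G)$ via the product map, and then verify transversality on the irreducible locus. Following the notation of the proof of Lemma \ref{slice_regularity_F}, let $\delta_G$ be the real dimension of $G$, so the trace slice $G_\tau \subset G$ is a smooth submanifold of dimension $2\delta_G/3$ when $\tau \neq \pm 2$.

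Set $\mu_n \colon G_\tau^n \to G$, $(A_1, \ldots, A_n) \mapsto A_1 \cdots A_n$. The defining relation $x_1 \cdots x_n = y_1 \cdots y_n$ of $WF_n$ exhibits $\mathcal{R}_\tau(WF_n, G)$ as the fibered product $G_\tau^n \times_G G_\tau^n$, with respect to $\mu_n$ on both factors. The key assertion will be that this fiber product is transverse at every point of the irreducible locus, so $\mathcal{R}^{\mathtt{irr}}_\tau(WF_n, G)$ becomes a smooth manifold of dimension $2\dim G_\tau^n - \dim G = 2n \cdot (2\delta_G/3) - \delta_G$.

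The transversality check reduces to verifying that, at any $((A_i), (B_j))$ with $\mu_n(A_\bullet) = \mu_n(B_\bullet)$ corresponding to an irreducible $WF_n$--representation, the images of $d\mu_n$ at $(A_i)$ and at $(B_j)$ together span $T_{\mu_n(A_\bullet)} G$. After right-translating to $\mathfrak{g}$, the image of $d\mu_n$ at $(A_i)$ becomes $\sum_{i=1}^n \mathrm{Ad}(A_1 \cdots A_{i-1})\, V_i$, where $V_i = \{X \in \mathfrak{g} : \mathrm{tr}(X A_i) = 0\}$ is a codimension-one subspace. Under the trace pairing on $\mathfrak{g}$, the orthogonal complement of $V_i$ is the line spanned by $A_i - (\tau/2)\mathbf{1} \in \mathfrak{g}$, which coincides with the centralizer of $A_i$ in $\mathfrak{g}$ (since $A_i \neq \pm \mathbf{1}$). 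Using $\mathrm{Ad}$--invariance of the trace pairing, the orthogonal complement of the image consists of those $Y \in \mathfrak{g}$ commuting with every conjugate $(A_1 \cdots A_{i-1})\, A_i\, (A_1 \cdots A_{i-1})^{-1}$, and an easy induction identifies this with the joint centralizer $\{Y \in \mathfrak{g} : [Y, A_i] = 0 \text{ for all } i\}$. Therefore the sum of the two images fills $\mathfrak{g}$ exactly when the joint centralizer of $\{A_i\} \cup \{B_j\}$ in $\mathfrak{g}$ is trivial, which is precisely irreducibility of the $WF_n$--representation.

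Finally, as recalled in Subsection \ref{Subsec_analytic}, the natural projection $\mathcal{R}^{\mathtt{irr}}_\tau(WF_n, G) \to \mathcal{X}^{\mathtt{irr}}_\tau(WF_n, G)$ is a principal $G/\{\pm \mathbf{1}\}$--bundle, so $\dim \mathcal{X}^{\mathtt{irr}}_\tau(WF_n, G) = 2n \cdot (2\delta_G/3) - 2\delta_G = (4n-6)\cdot \delta_G/3$. Substituting $\delta_G = 6$ or $3$ yields the asserted dimensions $8n-12$ and $4n-6$, respectively. The main obstacle is the transversality calculation in the third paragraph, where irreducibility of the $WF_n$--representation must be translated into surjectivity of the combined differential; once that is in hand, everything else parallels Lemma \ref{slice_regularity_F}.
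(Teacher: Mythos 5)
Your proposal is correct and takes essentially the same route as the paper: both realize $\mathcal{R}_\tau({WF}_n,G)$ inside $G_\tau^{2n}$ (your fiber product $G_\tau^n\times_G G_\tau^n$ is just the paper's preimage of $\mathbf{1}$ under the $2n$--fold product map), identify the tangent spaces $T_{A}G_\tau$ via the trace pairing on $\mathfrak{g}$, use irreducibility to force surjectivity of the combined differential, and then quotient by the $G/\{\pm\mathbf{1}\}$--bundle to get the stated dimensions. The only nitpick is the word ``precisely'' at the end of your transversality paragraph: triviality of the joint centralizer of $\{A_i\}\cup\{B_j\}$ in $\mathfrak{g}$ is implied by irreducibility (Schur) but is not equivalent to it when $\tau\neq\pm2$ (two non-commuting regular elements sharing only one common eigenvector give a reducible representation with trivial joint centralizer), though only the implication ``irreducible $\Rightarrow$ trivial joint centralizer $\Rightarrow$ transverse'' is actually used, so the argument stands.
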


\begin{proof}
	Denote $G_\tau=\{A\in G\colon\mathrm{tr}(A)=\tau\}$.
	As explained in the proof of Lemma \ref{slice_regularity_F},
	for any $\tau\neq\pm2$,
	$G_\tau$ is a smooth manifold of dimension $2\delta_G/3$,
	where $\delta_G$ denotes the real dimension of $G$.
	
	To understand the tangent space $T_AG_\tau$ at any $A\in G_\tau$,
	we identify $T_{\mathbf{1}}G$ with the Lie algebra $\mathfrak{g}$ of $G$,
	viewed as a real linear subspace of $\mathrm{M}_2(\Complex)$ (see Example \ref{q_algebra_example}).
	%More specifically, $\mathfrak{sl}(2,\Complex)$ 
	%consisting of all the complex square matrices of size $2$ with trace $0$,
	%and $\mathfrak{sl}(2,\Real)$ consists of all the real matrices in $\mathfrak{sl}(2,\Complex)$,
	%and	$\mathfrak{su}(2)$ consists of all the skew-Hermitian matrices in $\mathfrak{sl}(2,\Complex)$.
	The tangent space $T_A G$ at any $A\in G$ is identified with $A\mathfrak{g}$,
	consisting of all $AX\in\mathrm{M}_2(\Complex)$ for all $X\in\mathfrak{g}$.
	For any $\tau\neq\pm2$ and any $A\in G_\tau$,
	a tangent vector $AX\in T_A G_\tau$ 
	is characterized by the linear equation $\mathrm{tr}(AX)=0$.
	Since $X\in\mathfrak{g}$ implies $\mathrm{tr}(X)=0$,
	$\mathrm{tr}(AX)=0$ is equivalent to $\mathrm{tr}((2A-\tau\mathbf{1})X)=0$.
	On the other hand, 
	using the characteristic equation $A^2-\mathrm{tr}(A)\cdot A+\mathbf{1}=0$,
	we observe that $A-A^{-1}=2A-\tau\mathbf{1}$	lies in $\mathfrak{g}$.
	In other words, for any $\tau\neq\pm2$ and for any $A\in G_\tau$,
	$AX\in T_AG_\tau$ is equivalent to $X\in (2A-\tau\mathbf{1})^\perp$,
	where $\perp$ denotes the orthogonal complement in $\mathfrak{g}$,
	with respect to 
	the (nondegenerate, complex or real symmetric) trace pairing $(X,Y)\mapsto \mathrm{tr}(XY)$.
	For any $B\in G$, the adjoint action $X\mapsto BXB^{-1}$ of $B$ on $\mathfrak{g}$
	preserves $(2A-\tau\mathbf{1})^\perp$ if and only if 
	it preserves $2A-\tau\mathbf{1}$.
	The last condition is equivalent to the equation $BAB^{-1}=A$ in $G$.
	
	We summarize the above discussion as follows.
	For any $\tau\neq\pm2$ and for any $A\in G_\tau$,
	we can identify
	$$T_AG_\tau=AH_A^\perp$$
	as a real linear subspace of $T_AG=A\mathfrak{g}$,
	where $H_A=2A-\tau\mathbf{1}$ lies in $\mathfrak{g}$.
	Moreover,
	for any $B\in G$,
	$BH_A^\perp B^{-1}=H_A^\perp$ holds if and only if $B$ commutes with $A$. 
	
	Since ${WF}_n$ is generated by $x_1,\ldots,x_n,y_1,\ldots,y_n$,
	subject to $x_1\cdots x_n=y_1\cdots y_n$,
	$\mathcal{R}_\tau({WF}_n,G)$ 
	can be identified with the preimage of $\mathbf{1}\in G$
	for the map 
	$$G_\tau^{2n}\to G\colon
	(A_1,\ldots,A_{2n}) \mapsto 	A_1\cdots A_{2n},$$
	observing $\mathrm{tr}(B)=\mathrm{tr}(B^{-1})$ for any $B\in G$.
	Assuming $(A_1,\ldots,A_{2n})\in G_\tau^{2n}$ and $A_1\cdots A_{2n}=\mathbf{1}$,
	the tangent map at $(A_1,\ldots,A_{2n})$
	is a real linear homomorphism
	$T_{A_1}G_\tau\oplus\cdots\oplus T_{A_{2n}}G_\tau\to T_{\mathbf{1}} G$,
	which is identified as
	$$A_1H_{A_1}^\perp\oplus\cdots \oplus A_{2n}H_{A_{2n}}^\perp \to \mathfrak{g}.$$
	By direct computation, this tangent map takes the form
	$$(A_1X_1,\ldots,A_{2n}X_{2n}) \mapsto \sum_\nu S_\nu X_\nu S_\nu^{-1},$$
	%\begin{eqnarray*}
	%A_1H_{A_1}^\perp\oplus\cdots A_{2n}H_{A_{2n}}^\perp &\longrightarrow& \mathfrak{g}\\
	%(A_1X_1,\ldots,A_{2n}X_{2n}) &\mapsto& \sum_\nu S_\nu X_\nu S_\nu^{-1}
	%\end{eqnarray*}
	summing over all $\nu=1,\ldots,2n$, 
	where we denote $S_\nu=A_1\cdots A_\nu$ in $G$.
	Note that the tangent map restricted to 
	each direct summand $A_\nu H_{A_\nu}^\perp$
	has rank $2$,
	over $\Complex$ if $\mathfrak{g}$ is $\mathfrak{sl}(2,\Complex)$,
	or over $\Real$ if $\mathfrak{g}$ is $\mathfrak{sl}(2,\Real)$ or $\mathfrak{su}(2)$.
	If this tangent map is not surjective,
	then we must have $S_\nu H_{A_\nu}^\perp S_\nu^{-1}$ identical for all $\nu=1,\ldots,2n$,
	implying that $A_1,\ldots,A_{2n}$ all commute with each other.
	This cannot be the case if 
	$(A_1,\ldots,A_{2n})$
	is an irreducible $G$--representation for ${WF}_n$.
	
	The above argument implies that for any $\tau\neq\pm2$,
	the map $G^{2n}_\tau\to G$ is a local submersion onto a neighborhood of $\mathbf{1}$
	at any preimage point in $\mathcal{R}^{\mathtt{irr}}_\tau({WF}_n,G)$.
	It follows that $\mathcal{R}^{\mathtt{irr}}_\tau({WF}_n,G)$ is a smooth manifold
	of dimension $4n\delta_G/3-\delta_G=(4n-3)\cdot\delta_G/3$.
	As $\mathcal{R}^{\mathtt{irr}}_\tau({WF}_n,G)\to\mathcal{X}^{\mathtt{irr}}_\tau({WF}_n,G)$
	is a bundle projection with fibers diffeomorphic to $G/\{\pm \mathbf{1}\}$,
	it follows that $\mathcal{X}^{\mathtt{irr}}_\tau({WF}_n,G)$ is a smooth manifold
	of dimension $(4n-3)\cdot\delta_G/3-\delta_G=(4n-6)\cdot\delta_G/3$.
\end{proof}

\begin{lemma}\label{reducible_finiteness}
	Adopt Notation \ref{notation_slice}.
	For any complex $\tau\neq\pm2$, if $G$ is $\mathrm{SL}(2,\Complex)$,
	or for any real $\tau\neq\pm2$, if $G$ is $\mathrm{SL}(2,\Real)$ or $\mathrm{SU}(2)$,
	$\mathcal{X}^{\mathtt{red}}_\tau(F_n,G)$ is a finite set of cardinality $2^{n-1}$,
	and $\mathcal{X}^{\mathtt{red}}_\tau({WF}_n,G)$ is a finite set of cardinality $\leq 2^{2n-1}$.
\end{lemma}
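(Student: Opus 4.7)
The plan is to identify the reducible characters in a trace slice with abelian representations modulo the Weyl involution $\lambda\mapsto\lambda^{-1}$, and then count such representations directly. For each of the three groups $G$ in question, a reducible $G$--representation has abelian image (as noted in the discussion following Example \ref{q_algebra_example}); its character is therefore determined by a homomorphism $\lambda$ from the source group into a maximal torus of $G$ (the diagonal $\Real^\times$ or a conjugate of $\mathrm{SO}(2)$ for $\mathrm{SL}(2,\Real)$, the unit circle for $\mathrm{SU}(2)$, a diagonal subgroup for $\mathrm{SL}(2,\Complex)$), with character $g\mapsto\lambda(g)+\lambda(g)^{-1}$. Two such $\lambda$'s yield the same character if and only if they differ by the Weyl inversion, since conjugation by the Weyl element simultaneously inverts all eigenvalues.

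The hypothesis $\tau\neq\pm2$ ensures that the quadratic $t^2-\tau t+1$ has two distinct roots $\lambda_0,\lambda_0^{-1}$, so for each generator $g$ with $\mathrm{tr}(\rho(g))=\tau$ there are exactly two possible torus values. Moreover, the Weyl involution on the relevant assignment set is free: a fixed $\lambda$ would require $\lambda(g)^2=1$ for every generator $g$, which would force $\lambda(g)=\pm1$ and hence trace $\pm2$, contrary to assumption.

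For $F_n$, the generators are unconstrained, so there are $2^n$ eigenvalue assignments; quotienting by the free involution gives exactly $2^{n-1}$ characters, as asserted. For $WF_n$, the relation $x_1\cdots x_n=y_1\cdots y_n$ translates into the single torus equation $\prod_\mu\lambda(x_\mu)=\prod_\mu\lambda(y_\mu)$, so out of the $2^{2n}$ unconstrained assignments on the $2n$ generators only a subset of solutions survives; modding out by the same free involution yields at most $2^{2n-1}$ characters, which is the stated bound.

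The main technical point to verify is that every reducible $G$--character in the trace slice arises from the above parametrization, uniformly across $G=\mathrm{SL}(2,\Complex),\mathrm{SL}(2,\Real),\mathrm{SU}(2)$. This requires noting that when $\tau\neq\pm 2$ the values $\lambda_0,\lambda_0^{-1}$ are actually attainable in the chosen torus in each group (real hyperbolic if $|\tau|>2$, elliptic if $|\tau|<2$, complex in the $\mathrm{SL}(2,\Complex)$ case), and that no parabolic elements can occur (parabolic elements have trace $\pm 2$). Once this case analysis is in place, the remainder is purely combinatorial, and I expect no substantive obstacle.
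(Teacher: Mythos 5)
Your proposal is correct and follows essentially the same route as the paper: the paper likewise reduces to abelian (diagonal) representations in the trace slice, writes each generator as $A^{\pm1}$ for a fixed $A$ with $\mathrm{tr}(A)=\tau\neq\pm2$ (your eigenvalue assignments $\lambda_0^{\pm1}$), and counts $2^n$ assignments modulo the overall inversion for $F_n$ and at most $2^{2n-1}$ for ${WF}_n$ after imposing the relation $x_1\cdots x_n=y_1\cdots y_n$. Your explicit freeness-of-the-involution and torus-attainability remarks are fine elaborations of the same counting argument, so no substantive difference.
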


\begin{proof}
	Fix some $A\in G$ with trace $\mathrm{tr}(A)=\tau$. 
	If $\tau\neq\pm2$, then $A\neq A^{-1}$.
	For the case with $F_n=\langle x_1,\ldots,x_n\rangle$, 
	any $G$--representation of $F_n$ with reducible $G$--character in the trace $\tau$--slice
	takes the form $x_1\mapsto A^{\epsilon_1},\ldots,x_n\mapsto A^{\epsilon_n}$, up to conjugacy,
	for some tuple $(\epsilon_1,\ldots,\epsilon_n)\in \{\pm1\}^n$.
	A pair of such tuples yield identical $G$--character 
	if and only if they differ by an overall sign flip.
	This shows that $\mathcal{X}^{\mathtt{red}}_\tau(F_n,G)$ is a finite set of cardinality $2^{n-1}$.
	Similarly, 
	the number of reducible $G$--characters of ${WF}_n$ in the trace $\tau$--slice is at most $2^{2n-1}$.
	Indeed, as one may easily check, the cardinality is equal to $\frac{1}{2}{{2n}\choose{n}}$,
	unless $A$ has finite even order $\leq2n$.
\end{proof}

\begin{lemma}\label{parametric_slices}
	Adopt Notation \ref{notation_slice}.
	\begin{enumerate}
	\item 
	For $G$ being $\mathrm{SL}(2,\Complex)$,
	and for any open subset $T\subset \Complex\setminus\{\pm2\}$,
	$\mathcal{X}^{\mathtt{irr}}_T(F_n,G)$ is a smooth manifold of dimension $4n-4$,
	and
	$\mathcal{X}^{\mathtt{irr}}_T({WF}_n,G)$ is a smooth manifold of dimension $8n-10$.
	\item
	For $G$ being $\mathrm{SL}(2,\Real)$ or $\mathrm{SU}(2)$,
	and for any open subset $T\subset \Real\setminus\{\pm2\}$,
	$\mathcal{X}^{\mathtt{irr}}_T(F_n,G)$ is a smooth manifold of dimension $2n-2$,
	and
	$\mathcal{X}^{\mathtt{irr}}_T({WF}_n,G)$ is a smooth manifold of dimension $4n-5$.
	\item 
	For any $(G,T)$ as above, the parametrization maps
	$\mathcal{X}^{\mathtt{irr}}_T(F_n,G)\to T$ and
	$\mathcal{X}^{\mathtt{irr}}_T({WF}_n,G)\to T$ are smooth submersions,
	and indeed, bundle projections over each connected component of $T$.
	\item
	For any $(G,T)$ as above, the parametrization map
	$\mathcal{X}^{\mathtt{red}}_T(F_n,G)\to T$ is a finite covering projection,
	$\mathcal{X}^{\mathtt{red}}_T({WF}_n,G)\to T$ is a finite covering projection 
	restricted away from an isolated finite subset.
	\end{enumerate}
\end{lemma}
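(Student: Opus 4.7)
The strategy is to upgrade the tangent-space analyses of Lemmas \ref{slice_regularity_F} and \ref{slice_regularity_WF} by treating the trace parameter $\tau$ as an auxiliary smooth variable ranging in $T$. Every ingredient in those earlier proofs—the submanifold $G_\tau \subset G$, its tangent space $A\, H_A^\perp$ where $H_A = 2A - \tau \mathbf{1}$, and the multiplication map $G_\tau^{2n} \to G$—depends smoothly on $\tau$ in the relevant open subset of $\Complex$ or $\Real$. So the earlier transversality arguments should extend essentially verbatim, picking up an extra $\dim_\Real T$ dimensions.

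For parts (1) and (2), I would first check that the incidence variety
$G_T = \{(A,\tau) \in G \times T : \mathrm{tr}(A) = \tau\}$
is a properly embedded smooth submanifold of $G \times T$ of real dimension $2\delta_G/3 + \dim_\Real T$, using that $\mathrm{tr}$ is a submersion away from $\pm\mathbf{1}$. Then $\mathcal{R}^{\mathtt{irr}}_T(F_n,G)$ identifies with an open subset of the $n$-fold fiber product of $G_T$ over $T$, of dimension $n \cdot 2\delta_G/3 + \dim_\Real T$; dividing by the free $G/\{\pm\mathbf{1}\}$-action yields the asserted $\mathcal{X}^{\mathtt{irr}}_T(F_n,G)$ dimension. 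For the $WF_n$ case, I would rerun the Lemma \ref{slice_regularity_WF} tangent calculation on the parametric multiplication map $G_T^{2n} \to G$: the same noncommutativity obstruction forces surjectivity at irreducible preimages of $\mathbf{1}$, and the parameter direction is invisible to the target, so the dimension increases by exactly $\dim_\Real T$ compared to the fixed-slice count.

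For part (3), smoothness and submersivity of the parametrization map are inherited from the projection $G_T \to T$ through the fiber product and the free group quotient. Upgrading to a bundle projection over each connected component $T_0$ of $T$ requires building a smooth trivialization: within $T_0$, the trace selects a locally finite collection of $G$-conjugacy classes with smoothly varying representatives and commuting centralizers, so one can transport a basepoint $A_{\tau_0} \in G_{\tau_0}$ to a smooth family $A_\tau \in G_\tau$ implemented by a smooth conjugating path in $G$. This yields a $G$-equivariant trivialization of $G_T \to T_0$ that descends to the parametric $F_n$- and $WF_n$-character slices.

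For part (4), each reducible character in $\mathcal{X}^{\mathtt{red}}_\tau(F_n,G)$ was classified in Lemma \ref{reducible_finiteness} by a sign tuple $(\epsilon_1, \ldots, \epsilon_n) \in \{\pm 1\}^n$ modulo overall flip, a combinatorial datum independent of $\tau$. Varying $\tau$ smoothly, each tuple defines a smooth global section of the parametrization, and the $2^{n-1}$ sections are pairwise disjoint, giving a trivial finite cover. The corresponding argument for $WF_n$ produces $\tfrac{1}{2}\binom{2n}{n}$ sections generically, but at those isolated $\tau$ for which a representative $A \in G_\tau$ has finite even order $\leq 2n$ (that is, $\tau = 2\cos(\pi k/m)$ for bounded $m$), some sections collide. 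The main step needing care is to pin down this isolated finite exceptional set and verify the covering property on its complement; this is where I expect whatever subtlety remains to concentrate, though no new technique beyond Lemma \ref{reducible_finiteness} is called for.
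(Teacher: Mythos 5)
Your overall strategy is the same as the paper's: re-run the tangent-space arguments of Lemmas \ref{slice_regularity_F} and \ref{slice_regularity_WF} with the trace $\tau$ treated as an extra smooth parameter. Your parts (1)--(2) (the incidence manifold $G_T$, the fiber product over $T$, surjectivity of the parametric multiplication map at irreducible preimages of $\mathbf{1}$, and the quotient by the free $G/\{\pm\mathbf{1}\}$ conjugation action) reproduce the paper's computation, and your dimension counts agree; the submersion half of part (3) and your part (4) (sign-tuple sections for $F_n$, with collisions for $WF_n$ only at the finitely many traces of elements of finite even order $\leq 2n$) likewise match the paper's ``arguing in the parametric way'' appeal to Lemmas \ref{slice_regularity_F} and \ref{reducible_finiteness}.

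The gap is in your trivialization step for part (3). First, a ``smooth conjugating path'' cannot transport $A_{\tau_0}\in G_{\tau_0}$ to $A_\tau\in G_\tau$, because conjugation preserves the trace; what you actually need is a conjugation-equivariant identification $G_\tau\cong G_{\tau_0}$, for instance rescaling the traceless part, $A=\tfrac{\tau}{2}\mathbf{1}+X\mapsto \tfrac{\tau_0}{2}\mathbf{1}+cX$ with $c^2=(4-\tau_0^2)/(4-\tau^2)$, keeping track of the two conjugacy classes of $G_\tau$ in the elliptic $\mathrm{SL}(2,\Real)$ range. Second, and more seriously, even granting such an identification, applying it entrywise does not preserve the defining relation $x_1\cdots x_n=y_1\cdots y_n$ of $WF_n$: if $A_1\cdots A_n=B_1\cdots B_n$ there is no reason the rescaled tuple satisfies the same relation. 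So a trivialization of $G_T\to T_0$ descends to the parametric $F_n$--slices but does \emph{not} descend to $\mathcal{R}^{\mathtt{irr}}_T(WF_n,G)$ or $\mathcal{X}^{\mathtt{irr}}_T(WF_n,G)$, and your argument as written only proves the bundle statement in the $F_n$ case. The paper does not argue this way: it forms the parametric map $(G^{2n})_T\to G\times T$, $(A_1,\ldots,A_{2n})_\tau\mapsto (A_1\cdots A_{2n},\tau)$, checks it is a submersion at irreducible preimage points of $\{\mathbf{1}\}\times T$, and extracts the submersion/bundle statement for $\mathcal{R}^{\mathtt{irr}}_T(WF_n,G)\to T$ (hence for the character slices, via the principal $G/\{\pm\mathbf{1}\}$--bundle $\mathcal{R}^{\mathtt{irr}}\to\mathcal{X}^{\mathtt{irr}}$) from this parametric transversality together with the bundle structure of $(G^{2n})_T\to T$. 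To complete your part (3) you must replace the descent claim for $WF_n$ by an argument of this kind, i.e.\ one that produces local trivializations compatible with the relation rather than entrywise identifications of the slices $G_\tau$.
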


\begin{proof}
	We only illustrate by showing the assertions regarding 
	$\mathcal{X}^{\mathtt{irr}}_T({WF}_n,G)$ and $\mathcal{X}^{\mathtt{irr}}_T({WF}_n,G)\to T$.
	
	Since $T$ is an open subset of $\Complex\setminus\{\pm2\}$ or $\Real\setminus\{\pm2\}$,
	we obtain $G_T=\bigcup_{\tau\in T} G_\tau$ as a smooth submanifold in $G$,
	and $(G^{2n})_T=\bigcup_{\tau\in T}G^{2n}_\tau$ as a smooth submanifold in $G^{2n}$,
	both submersing $T$ via the parametrization map.
	These submersions are bundle projections over each connected component of $T$.
	Using the maps $G^{2n}_\tau\to G$ parametrized by $\tau\in T$ (as in the proof of Lemma \ref{slice_regularity_WF})
	and the parametrization map $(G^{2n})_T\to T$,
	we obtain a smooth map $(G^{2n})_T\to G\times T$.
	At any point $(A_1,\ldots,A_n)_\tau\in (G^{2n})_\tau$,
	such that $A_1\cdots A_{2n}=\mathbf{1}$ holds in $G$,
	and such that $\langle A_1,\ldots,A_{2n}\rangle$ is irreducible in $G$,
	the tangent map of $(G^{2n})_T\to G\times T$ is obviously surjective.
	In fact, at any such point,
	the subspace tangent to $G^{2n}_\tau$ surjects the subspace tangent to the factor $G$,
	and the induced maps between the quotient spaces can be identified as the identity map
	between the tangent space of $T$ at $\tau$.
	It follows that $\mathcal{R}^{\mathtt{irr}}_T({WF}_n,G)$ is a smooth manifold, 
	submersing $T$ as a bundle projection over each connected component of $T$.
	Moreover, the dimension of $\mathcal{R}^{\mathtt{irr}}_T({WF}_n,G)$
	is equal to $(4n-3)\cdot\delta_G/3+\delta_G/3=(4n-2)\cdot\delta_G/3$,
	where $\delta_G$ denotes the real dimension of $G$.
	The character map $\mathcal{R}^{\mathtt{irr}}_T({WF}_n,G)\to\mathcal{X}^{\mathtt{irr}}_T({WF}_n,G)$
	is a bundle projection with fibers diffeomorphic to $G/\{\pm\mathbf{1}\}$,
	and commutes with the parametrization maps onto $T$.
	It follows that $\mathcal{X}^{\mathtt{irr}}_T({WF}_n,G)$ has dimension 
	$(4n-2)\cdot\delta_G/3-\delta_G=(4n-5)\cdot\delta_G/3$,
	and the parametrization map $\mathcal{X}^{\mathtt{irr}}_T({WF}_n,G)\to T$ 
	is a submersion,
	and moreover, a bundle projection onto each connected component of $T$.
	
	Similarly, the other assertions follow from the proofs of 
	Lemmas \ref{slice_regularity_F} and \ref{reducible_finiteness},
	by arguing in the parametric way.
\end{proof}

\section{Equivariant transversality}\label{Sec-transversality}
In this section, we introduce what we call 
$(\nu,J)$--structures and $(\nu,J)$--manifolds (Definitions \ref{nu_J_structure_def} and \ref{nu_J_manifold_def}).
We develop a minimal toolkit of differential topology for disposing $(\nu,J)$--equivariant transversality 
(Lemmas \ref{nu_J_manifold_property}, \ref{nu_J_transverse_property}, and \ref{nu_J_transverse_approximation}),
which suffices for our subsequent application.
Transversality in the $(\nu,J)$--equivariant setting
appears more closely analogous to the usual theory in differential topology of smooth manifolds,
compared to other general equivariant settings in the literature (see Remark \ref{equivariant_transversality_literature}).
%All manifolds in this section are assumed to be second countable.

For standard techniques in differential topology of smooth manifolds, 
we refer to Hirsch's textbook \cite{Hirsch_book}.

\begin{definition}\label{nu_J_structure_def}
For any smooth manifold $Q$ (without boundary and of designated dimension),
a \emph{$(\nu,J)$--structure} on $Q$ refers to any pair $(\nu,J)$ as follows.
\begin{itemize}
\item The item $\nu\colon Q\to Q$ is a smooth self-diffeomorphism on $Q$.
We require $\nu$ to be an involution (namely, $\nu^2=\mathrm{id}$),
and denote by $Q^\nu$ the fixed locus of $\nu$.
\item The item $J\colon TQ|_{Q^\nu}\to TQ|_{Q^\nu}$ is 
a smooth vector bundle automorphism on the tangent bundle $TQ\to Q$ restricted over $Q^\nu$.
We require $J$ to be an almost complex structure (namely, $J^2=-\mathbf{1}$ fiberwise).
\item Moreover, we require the tangent map $\nu_*|_{P^\nu}\colon TQ|_{Q^\nu}\to TQ|_{Q^\nu}$
restricted over $Q^\nu$ to be $J$--antilinear (namely, $\nu_*\circ J=-J\circ \nu_*$ fiberwise).
\end{itemize}
\end{definition}

\begin{definition}\label{nu_J_manifold_def}
A smooth \emph{$(\nu,J)$--manifold} 
refers to a smooth manifold $Q$ furnished with a $(\nu,J)$--structure $(\nu,J)=(\nu_Q,J_Q)$.
A closed or open smooth \emph{$(\nu,J)$--submanifold}, respectively,
refers to a closed or open smooth submanifold $S$ of a smooth $(\nu,J)$--manifold $Q$,
such that $S$ is $\nu$--invariant, and such that $TS|_{S^\nu}$ is $J$--invariant.
A \emph{$(\nu,J)$--equivariant} smooth map
refers to a smooth map $f\colon P\to Q$
between smooth $(\nu,J)$--manifolds $P$ and $Q$,
such that $f$ is $\nu$--equivariant (namely, $f\circ\nu=\nu\circ f$),
and such that $f_*|_{P^\nu}\colon TP|_{P^\nu}\to TQ|_{Q^\nu}$ is $J$--linear 
(namely, $f_*\circ J=J\circ f_*$ fiberwise).
A \emph{$(\nu,J)$--equivariant} smooth homotopy
refers to a smooth homotopy $P\times[0,1]\to Q\colon (x,t)\mapsto f_t(x)$
between a pair of $(\nu,J)$--equivariant smooth maps $f_0,f_1\colon P\to Q$
(constant in $t$ nearby $P\times\{0,1\}$),
such that $f_t\colon P\to Q$ is $(\nu,J)$--equivariant for all $t\in[0,1]$.
\end{definition}

\begin{example}\label{nu_J_example}
	Let $T\subset \Complex\setminus\{\pm2\}$ be any complex-conjugation invariant, open subset.
	Adopt Notations \ref{notation_braid} and \ref{notation_slice}.
	\begin{enumerate}
	\item
	The parametric trace $T$--slice $\mathcal{X}^{\mathtt{irr}}_T({WF}_n;\mathrm{SL}(2,\Complex))$
	of irreducible $\mathrm{SL}(2,\Complex)$--characters 
	is equipped with a natural $(\nu,J)$--structure, as follows.
	The involution $\nu$ on $\mathcal{X}^{\mathtt{irr}}_T({WF}_n;\mathrm{SL}(2,\Complex))$
	is induced by the complex conjugation of $\mathrm{SL}(2,\Complex)$--characters
	(that is, taking the complex conjugation of the values for any character).
	The fixed locus of $\nu$ is exactly 
	$\mathcal{X}^{\mathtt{irr}}_T({WF}_n;\mathrm{SL}(2,\Real))
	\sqcup \mathcal{X}^{\mathtt{irr}}_T({WF}_n;\mathrm{SU}(2))$.
	Note that $\mathcal{X}^{\mathtt{irr}}_T({WF}_n;\mathrm{SL}(2,\Complex))$ is 
	a Zariski open subset of a complex algebraic set (Subsection \ref{Subsec_analytic}), 
	and is a smooth manifold (Lemma \ref{parametric_slices}),
	so it is also an analytic complex manifold.
	The almost complex structure $J$ along the fixed locus of $\nu$ is induced
	by the complex manifold structure of $\mathcal{X}^{\mathtt{irr}}_T(F_n;\mathrm{SL}(2,\Complex))$.
	Similarly, $\mathcal{X}^{\mathtt{irr}}_T(F_n;\mathrm{SL}(2,\Complex))$ is also 
	a smooth $(\nu,J)$--manifold.
	\item
	The parametrization map
	$\mathcal{X}^{\mathtt{irr}}_T({WF}_n;\mathrm{SL}(2,\Complex))\to T$
	is a $(\nu,J)$--equivariant smooth bundle projection,
	viewing $T$ as an open $(\nu,J)$--submanifold of $\Complex$ 
	(Lemma \ref{parametric_slices}).
	Here, 
	we may apply Lemma \ref{parametric_slices} first to $\Complex\setminus\{\pm2\}$
	and then restrict over $T$.
	Hence, for any $\tau\in T$, the trace $\tau$--slice
	$\mathcal{X}^{\mathtt{irr}}_\tau({WF}_n;\mathrm{SL}(2,\Complex))\to T$
	is a closed smooth $(\nu,J)$--submanifold of 
	$\mathcal{X}^{\mathtt{irr}}_T({WF}_n;\mathrm{SL}(2,\Complex))$.
	The similar statements hold with $F_n$ in place of ${WF}_n$.
	\item
	For any $\sigma\in\mathscr{B}_n$,
	the pullback map between 
	parametric trace $T$--slice of irreducible $\mathrm{SL}(2,\Complex)$--characters 
	$r^*_\sigma\colon\mathcal{X}^{\mathtt{irr}}_T(F_n;\mathrm{SL}(2,\Complex))
	\to \mathcal{X}^{\mathtt{irr}}_T({WF}_n;\mathrm{SL}(2,\Complex))$,
	as naturally induced 
	by the group homomorphism $r_\sigma\colon {WF}_n\to F_n$,
	is a $(\nu,J)$--equivariant smooth proper embedding.
	The image of $r^*_\sigma$ is a closed smooth $(\nu,J)$--submanifold of 
	$\mathcal{X}^{\mathtt{irr}}_T({WF}_n;\mathrm{SL}(2,\Complex))$.
	Here, the closedness and the properness follow from the fact
	that the $\mathrm{SL}(2,\Complex)$--characters form an algebraic set,
	containing the reducibles as a Zariski closed subset (Subsection \ref{Subsec_analytic}).
	Note also that $r^*_\sigma$ is a regular map between the parametric trace $T$--slices of
	$\mathrm{SL}(2,\Complex)$--characters, preserving reducibility and irreducibility.
	The injectivity of $r^*_\sigma$ follows from the surjectivity of $r_\sigma$.
	In particular, these statements hold for $r^*_{\mathrm{id}}$.
	\end{enumerate}
\end{example}

\begin{remark}\label{nu_J_orientation_remark}
	We mention that
	all the $(\nu,J)$--manifolds in Example \ref{nu_J_example} are canonically oriented.
	Moreover, there are canonical orientations of their $\nu$--fixed loci,
	and the $J$--induced orientations near the $\nu$--fixed loci 
	agree with the canonical orientation of the $(\nu,J)$--manifold.
	In fact, these canonical orientations can be defined
	according to the proofs of Lemmas \ref{slice_regularity_F} and \ref{slice_regularity_WF},
	(as the induced orientations of the submersion preimages therein).
	However, these orientations are not so important for our application,
	because we only need to count intersections modulo $2$ in this paper.
	For the same reason, 
	we do not discuss orientations for $(\nu,J)$--manifolds in this section.
	One may come up with several possible variants of this notion,
	depending on the orientability of the manifold and that of the fixed locus.
\end{remark}

\begin{lemma}\label{nu_J_manifold_property}
	Let $Q$ be a smooth $(\nu,J)$--manifold of dimension $q$.
	Then, $q$ is an even number, and $Q^\nu$ is a closed smooth submanifold in $P$ of dimension $q/2$.
	Moreover, $TQ|_{Q^\nu}$ decomposes as the direct sum of vector subbundles $TQ^\nu$ and $J(TQ^\nu)$,
	on which $\nu_*|_{Q^\nu}$ acts as scalar $+1$ and $-1$, respectively.
\end{lemma}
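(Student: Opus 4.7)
The plan is to combine two standard ingredients: a local linearization of the involution $\nu$ near its fixed points, and the algebraic consequence of the anticommutation relation $\nu_*\circ J=-J\circ\nu_*$ for the eigenspace decomposition of $\nu_*$.

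First I would establish that $Q^\nu$ is closed in $Q$, simply by writing it as the preimage of the diagonal in $Q\times Q$ under the continuous map $x\mapsto (x,\nu(x))$. Next, for each fixed point $p\in Q^\nu$, I would linearize $\nu$ near $p$ via the classical Bochner averaging trick: choose any chart around $p$ identifying an open neighborhood with an open subset of $T_pQ$ sending $p$ to $0$, then replace the chart by its average with its pullback under $\nu$. Composing with a further correction supplied by $\nu_{*,p}^{-1}$ if needed, one obtains new local coordinates in which $\nu$ is given by the linear involution $\nu_{*,p}\colon T_pQ\to T_pQ$. In such coordinates, $Q^\nu$ is locally exactly the $(+1)$-eigenspace of $\nu_{*,p}$, which shows simultaneously that $Q^\nu$ is a smooth submanifold near $p$ and that $T_pQ^\nu$ coincides with this $(+1)$-eigenspace.

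Third, I would exploit the hypothesis $\nu_*\circ J=-J\circ\nu_*$ on $T_pQ$. For $v\in T_pQ^\nu$ (so $\nu_*v=v$), one has $\nu_*(Jv)=-J(\nu_*v)=-Jv$, so $Jv$ lies in the $(-1)$-eigenspace of $\nu_{*,p}$; symmetrically, if $\nu_*w=-w$ then $\nu_*(Jw)=Jw$, so $J$ sends the $(-1)$-eigenspace into $T_pQ^\nu$. Since $J^2=-\mathbf{1}$, the map $J$ is an isomorphism, and these two observations together show that $J$ restricts to mutually inverse (up to sign) isomorphisms between the two eigenspaces. Consequently $\dim T_pQ^\nu=\dim J(T_pQ^\nu)$, giving $q=2\dim Q^\nu$ and the pointwise direct sum decomposition $T_pQ=T_pQ^\nu\oplus J(T_pQ^\nu)$ on which $\nu_{*,p}$ acts by $+1$ and $-1$ respectively.

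Finally, I would upgrade the pointwise statement to a bundle statement: $TQ^\nu$ is already a smooth subbundle of $TQ|_{Q^\nu}$ because $Q^\nu$ is a smooth submanifold, and $J(TQ^\nu)$ is smooth because $J$ is a smooth bundle automorphism of $TQ|_{Q^\nu}$; the pointwise direct sum property then automatically yields the bundle direct sum. I expect no serious obstacle; the only step requiring real care is the Bochner-type linearization, which is standard for smooth finite-order diffeomorphisms but needs to be invoked explicitly so that the identification of $T_pQ^\nu$ with the $(+1)$-eigenspace of $\nu_{*,p}$ is justified rather than merely plausible.
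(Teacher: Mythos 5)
Your proof is correct and follows essentially the same route as the paper: identify $TQ^\nu$ with the $(+1)$--eigenbundle of $\nu_*|_{Q^\nu}$, then use the $J$--antilinearity of $\nu_*$ to see that $J$ interchanges the $\pm1$--eigenspaces, yielding the decomposition $TQ|_{Q^\nu}=TQ^\nu\oplus J(TQ^\nu)$ and the dimension count. The only difference is that you spell out the Bochner averaging argument for linearizing the involution, whereas the paper simply invokes the standard fact that the fixed locus of a smooth involution is a closed smooth submanifold whose tangent space is the $(+1)$--eigenspace.
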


\begin{proof}
	Since $\nu$ is a smoothly diffeomorphic involution on $Q$,
	the fixed locus $Q^\nu$ is a closed smooth submanifold of $Q$.
	The subbundle $TQ^\nu$ is precisely the fixed subbundle of $TQ|_{Q^\nu}$ under $\nu_*|_{Q^\nu}$.
	Since $\nu_*|_{Q^\nu}$ is $J$--antilinear, $J(TQ^\nu)$ is invariant under $\nu_*|_{Q^\nu}$
	and $\nu_*|_{Q^\nu}$ acts as scalar $-1$ on $J(TQ^\nu)$. 
	So, the eigenvector subbundle decomposition of $TQ|_{Q^\nu}$
	with respect to $\nu_*|_{Q^\nu}$ is precisely $TQ^{\nu}\oplus J(TQ^{\nu})$.
	Hence, the dimension $q$ of $Q$ is even, and $Q^\nu$ is of dimension $q/2$.	
\end{proof}
%
%\begin{definition}\label{nu_J_equivariant_def}
%\end{definition}

\begin{lemma}\label{nu_J_transverse_property}
	Let $f\colon P\to Q$ is a $(\nu,J)$--equivariant smooth map 
	between smooth $(\nu,J)$--manifolds $P$ and $Q$.
	Let $S$ is a closed smooth $(\nu,J)$--submsnifold in $Q$ of codimension $k$.
	
	Suppose that $f$ is transverse to $S$ on some $\nu$--invariant open subset $U$ of $P$.
	Then, the restricted map $f^\nu\colon P^\nu\to Q^\nu$ is transverse to $S^\nu$
	on the open subset $U^\nu=U\cap P^\nu$ of $P^\nu$.
	Moreover, $f^{-1}(S)\cap U$ is a closed smooth $(\nu,J)$--submanifold in $U$	of codimension $k$.
\end{lemma}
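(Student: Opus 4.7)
The plan is to argue infinitesimally at each point of interest and reduce $(\nu,J)$-equivariant transversality to an eigenspace calculation. First, I would fix an arbitrary $p\in U^\nu$ with $f(p)\in S^\nu$; by $\nu$-equivariance of $f$, these are exactly the points of $(f^\nu)^{-1}(S^\nu)\cap U^\nu$ at which transversality of $f^\nu$ with $S^\nu$ must be checked. Lemma \ref{nu_J_manifold_property} applied to $P$, $Q$, and $S$ splits the three tangent spaces into their $\pm 1$ eigenspaces under $\nu_*$: $T_pP=T_pP^\nu\oplus J(T_pP^\nu)$, together with the analogous decompositions of $T_{f(p)}Q$ and of $T_{f(p)}S$.

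The key step is to identify the $\nu_*$-eigenspace decomposition of the subspace $f_*(T_pP)\subseteq T_{f(p)}Q$, which is $\nu_*$-invariant by the $\nu$-equivariance of $f$. Since $f_*|_{P^\nu}$ is $J$-linear, one has $f_*(J(T_pP^\nu))=J(f_*(T_pP^\nu))$; as $f_*(T_pP^\nu)\subseteq T_{f(p)}Q^\nu$ lies in the $+1$ eigenspace, its $J$-image lies in the $-1$ eigenspace $J(T_{f(p)}Q^\nu)$. This exhibits $f_*(T_pP)=f_*(T_pP^\nu)\oplus J(f_*(T_pP^\nu))$ as the $\nu_*$-eigenspace decomposition, and the analogous statement for $T_{f(p)}S$ is built into the definition of a $(\nu,J)$-submanifold. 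The transversality hypothesis $f_*(T_pP)+T_{f(p)}S=T_{f(p)}Q$ is then an equality of $\nu_*$-invariant subspaces, and using that the $+1$ eigenspace of a sum of $\nu_*$-invariant subspaces is the sum of their $+1$ eigenspaces (via the averaging projection $\tfrac{1}{2}(\mathrm{id}+\nu_*)$), I would extract $f^\nu_*(T_pP^\nu)+T_{f(p)}S^\nu=T_{f(p)}Q^\nu$, which is exactly the transversality of $f^\nu$ with $S^\nu$ at $p$.

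For the second assertion, standard transversality theory gives that $f^{-1}(S)\cap U$ is a closed smooth submanifold of $U$ of codimension $k$. Its $\nu$-invariance is immediate from the $\nu$-equivariance of $f$ and the $\nu$-invariance of $S$ and $U$. To verify that its tangent bundle is $J$-invariant along the fixed locus, let $x$ be in the $\nu$-fixed locus of $f^{-1}(S)\cap U$, so that $f(x)\in Q^\nu\cap S=S^\nu$. Then $T_x(f^{-1}(S))=f_*^{-1}(T_{f(x)}S)$, and for any $v$ in this preimage, $J$-linearity of $f_*$ together with the $J$-invariance of $T_{f(x)}S$ gives $f_*(Jv)=Jf_*(v)\in T_{f(x)}S$, hence $Jv\in T_x(f^{-1}(S))$. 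The main subtlety, and the only step beyond bookkeeping, is the eigenspace identification of $f_*(T_pP)$ in the second paragraph; once this is isolated, the remainder of the proof is formal.
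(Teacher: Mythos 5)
Your argument is correct and follows essentially the same route as the paper's proof: both decompose the tangent spaces into $\pm1$ eigenspaces of $\nu_*$ via Lemma \ref{nu_J_manifold_property}, use the $(\nu,J)$--equivariance of $f$ to place $f_*T_pP^\nu+T_{f(p)}S^\nu$ inside $T_{f(p)}Q^\nu$ and its $J$--image inside $J(T_{f(p)}Q^\nu)$, and conclude the eigenspace equality giving transversality of $f^\nu$ to $S^\nu$. Your second paragraph (the averaging projection) and the explicit verification $T_x(f^{-1}(S))=f_*^{-1}(T_{f(x)}S)$ with $J$--linearity merely spell out steps the paper states more tersely, so there is no substantive difference.
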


\begin{proof}
	For any point $x\in U$, 
	the transversality assumption means $f_*T_xP+T_{f(x)}S=T_{f(x)}Q$.
	If $x\in U^\nu$, we observe $f_*T_xP^\nu+T_{f(x)}S^\nu\leq T_{f(x)}Q^\nu$,
	and $f_*(J(T_xP^\nu))+J(T_{f(x)}S^\nu)\leq J(T_{f(x)}Q^\nu)$,
	since $f$ is $(\nu,J)$--equivariant.
	By Lemma \ref{nu_J_manifold_property},
	we infer $f_*T_xP^\nu+T_{f(x)}S^\nu=T_{f(x)}Q^\nu$ and 
	$f_*(J(T_xP^\nu))+J(T_{f(x)}S^\nu)= J(T_{f(x)}Q^\nu)$.
	In particular, 
	$f^\nu\colon P^\nu\to Q^\nu$ is transverse to $S^\nu$
	on the open subset $U^\nu$ of $P^\nu$.	
	
	By the transversality assumption,
	$f^{-1}(S)\cap U$ is a closed smooth submanifold in $U$ of codimension $k$.
	Since $f$ is $(\nu,J)$--equivariant and $U$ is $\nu$--invariant,
	$f^{-1}(S)\cap U$ is also $\nu$--invariant, with fixed locus $f^{-1}(S)\cap U^\nu$,
	and the tangent bundle of $f^{-1}\cap U$ over $f^{-1}(S)\cap U^\nu$	is $J$--invariant.
	This shows that $f^{-1}(S)\cap U$ is a closed smooth $(\nu,J)$--submanifold in $U$	of codimension $k$.
\end{proof}

\begin{lemma}\label{nu_J_transverse_approximation}
	Let $f\colon P\to Q$ is a $(\nu,J)$--equivariant smooth map 
	between smooth $(\nu,J)$--manifolds $P$ and $Q$.
	Let $S$ is a closed smooth $(\nu,J)$--submsnifold in $Q$ of codimension $k$.
	
	Then, for any $\nu$--invariant compact subset $K\subset P$,
	there exist some $(\nu,J)$--equivariant smooth map $g\colon P\to Q$, 
	such that $g$ is transverse to $S$ on $K$,
	and such that $g$ is $(\nu,J)$--equivariantly smoothly homotopic to $f$
	supported on some compact neighborhood of $K$ in $P$.
	Hence, $g$ is transverse to $S$ on some $\nu$--invariant open neighborhood of $K$.
	
	%Moreover, 
	%given any $\nu$--invariant open neighborhood $W$ of $K$ in $P$, 
	%and	given any open neighborhood $\mathscr{N}$ 
	%of $f\colon W\to Q$ in the Whitney $C^\infty$--topology,	
	%one may require $g\in\mathscr{N}$, 
	%and $g=f$ outside some $\nu$--invariant compact neighborhood of $K$ in $W$.
	Moreover, 
	given any open neighborhood $\mathscr{N}$ of $f\colon P\to Q$ in the Whitney $C^\infty$--topology,	
	one may require $g\in\mathscr{N}$.
\end{lemma}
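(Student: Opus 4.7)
The plan is to decompose the perturbation problem into two complementary pieces: a construction in a $\nu$-invariant tubular neighborhood of the fixed locus $P^\nu$, and a standard equivariant perturbation on the complement where $\nu$ acts freely. The crucial simplification, exploiting Lemma \ref{nu_J_manifold_property}, is the following observation. At any point $x\in P^\nu$ with $y=f(x)\in S^\nu$, one has the $\nu_*$-eigenspace decompositions $T_xP=T_xP^\nu\oplus J(T_xP^\nu)$, and similarly for $Q$ and $S$. Since $f$ is $(\nu,J)$-equivariant and $S$ is a $(\nu,J)$-submanifold, the equation $f_*(T_xP^\nu)+T_yS^\nu=T_yQ^\nu$ (transversality of $f^\nu$ to $S^\nu$) implies, after applying $J$, the analogous equation on the $-1$-eigenspaces. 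Thus, along $P^\nu$, transversality of $f$ to $S$ is \emph{equivalent} to transversality of $f^\nu\colon P^\nu\to Q^\nu$ to $S^\nu\subset Q^\nu$.

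First I would apply standard Whitney transversality approximation to $f^\nu\colon P^\nu\to Q^\nu$ relative to $S^\nu$, with perturbation supported in a compact neighborhood of $K\cap P^\nu$ in $P^\nu$. This produces a smooth map $h\colon P^\nu\to Q^\nu$, transverse to $S^\nu$ on $K\cap P^\nu$, arbitrarily close to $f^\nu$ in the Whitney $C^\infty$-topology. Next, I would extend $h$ to a $(\nu,J)$-equivariant perturbation of $f$ in a $\nu$-invariant tubular neighborhood of $P^\nu$. For this, fix a $\nu$-invariant Riemannian metric on $Q$ whose restriction to $TQ|_{Q^\nu}$ is compatible with $J$, and likewise on $P$. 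Using the associated exponential maps, identify neighborhoods of $P^\nu\subset P$ and $Q^\nu\subset Q$ with neighborhoods of the zero sections in $J(TP^\nu)$ and $J(TQ^\nu)$ respectively, and then define the extension by $J$-linear transport of $h$ along normal fibers, matched to the linearization of $f$ to first order. By the preceding observation, the extended map is automatically transverse to $S$ on a neighborhood of $K\cap P^\nu$.

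On the complement of a smaller $\nu$-invariant tubular neighborhood $N'$ of $P^\nu$, the involution $\nu$ acts freely, so I would apply standard transversality theory on the quotient manifold (where $S/\nu$ is a submanifold of $Q/\nu$) to produce a $\nu$-equivariant smooth perturbation of $f$, transverse to $S$ on $K\setminus N'$. Finally, I would splice the two perturbations via a $\nu$-invariant bump function supported in the overlap annulus, keeping the splice small enough in the Whitney topology to remain within the prescribed neighborhood $\mathscr{N}$ of $f$. The main obstacle will be the extension step: producing a perturbation in the tubular neighborhood that is genuinely $(\nu,J)$-equivariant, Whitney-close to $f$, and whose transversality along $P^\nu$ propagates outward. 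This hinges on arranging the normal-bundle trivializations on $P$ and $Q$ to be compatible not merely with $\nu$ but with $J$ along the fixed locus, which forces a coordinated choice of metrics and a $J$-linear extension formula; once this is in place, the remaining gluing and Whitney-approximation bookkeeping follows the classical pattern.
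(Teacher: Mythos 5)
Your outline follows essentially the same route as the paper: the key equivalence that, at points of $P^\nu$, transversality of $f$ to $S$ is detected by transversality of $f^\nu$ to $S^\nu$ (via the eigenbundle splitting $TQ|_{Q^\nu}=TQ^\nu\oplus J(TQ^\nu)$ of Lemma \ref{nu_J_manifold_property}); a non-equivariant transverse perturbation of $f^\nu$; its extension over a $\nu$--invariant tubular neighborhood by $J$--linear normal transport through the orthogonal exponential maps (the paper's formula is exactly $\mathrm{Exp}_\perp\circ(-J\circ h_*\circ J)\circ\mathrm{Exp}_\perp^{-1}$ for the perturbed fixed-locus map $h$); and a splice with a $\nu$--invariant bump function, all kept small in the Whitney topology. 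One caution on the extension: the normal derivative is forced by $(\nu,J)$--equivariance to be the $J$--conjugate of the tangential derivative of $h$ itself, so ``$J$--linear transport of $h$'' is the right prescription, while ``matched to the linearization of $f$'' would generically break $J$--linearity once $h\neq f^\nu$; drop the latter.

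The only step that does not work as literally stated is the free region. You propose to perturb the induced map on quotients, ``where $S/\nu$ is a submanifold of $Q/\nu$,'' but $Q/\nu$ is not a smooth manifold near $Q^\nu$ (the normal directions give a cone on a projective space), and an equivariant $f$ can send free points of $P$ into $Q^\nu$, so transversality downstairs is not even well posed there. The standard repair --- and what the paper does --- is to leave $Q$ alone: cover the $\nu$--invariant compact set $K'=K\setminus N'$ by finitely many open sets $U_1,\ldots,U_m$ in $P\setminus P^\nu$ with $U_i\cap\nu(U_i)=\emptyset$, perturb non-equivariantly on each $U_i$ by \cite[Chapter 3, Theorem 2.1]{Hirsch_book}, copy the modification to $\nu(U_i)$ by equivariance, and iterate, keeping each perturbation small enough not to destroy the transversality already achieved (in particular near $K^\nu$). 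With that substitution your argument closes, and it is then the same proof as the paper's.
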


\begin{proof}
	Let $K$ be any $\nu$--invariant compact subset of $P$.
	We denote $K^\nu=K\cap P^\nu$.
	The essential part of the proof is to perturb $f$
	into a $(\nu,J)$--equivariant smooth map transverse to $S$ on $K^\nu$,
	which relies crucially on the structure $J$.
	We first give a careful construction for this step,
	and then finish the proof by an easy further purturbation.
	%Given any threshold neighborhoods $\mathscr{N}$ and $W$,
	%we replace $P$ with $W$ before performing the constructions below,
	%and make sure all the small perturbations below are within $\mathscr{N}$.
	All the small perturbations below can be done within 
	any given threshold neighborhood $\mathscr{N}$.
			
	Possibly after replacing $P$ by some paracompact open neighborhood of $K$,
	there exists some $(\nu,J)$--invariant smooth Riemannian metric on $P$
	(that is, $\nu_*$--invariant on $TP$ and $J$--invariant on $TP|_{P^\nu}$).
	Indeed, any smooth Riemannian metric on $P^\nu$	extends uniquely to be
	a $J$--invariant and $\nu_*$--invariant (Euclidean) inner product on $TP|_{P^\nu}$
	(Lemma \ref{nu_J_manifold_property}),
	and further extends to be some $\nu$--invariant smooth Riemannian metric on $P$.
	Then, automatically, $J(TP^\nu)\to P^\nu$ becomes the normal vector bundle of $P^\nu$ in $P$.
	The bundle space $J(TP^\nu)$ inherits a natural $(\nu,J)$--structure,
	(namely, the derivative $(\nu_*,J_*)$ of the $(\nu,J)$--structure of $P$,
	restricted to $J(TP^\nu)$, where $\nu_*$ fixes exactly the zero section).	
	Hence, the orthogonal Riemannian exponential map along $P^\nu$,
	denoted as $\mathrm{Exp}_\perp\colon J(TP^\nu)\to P$,	
	is a $(\nu,J)$--equivariant smooth map.
	Possibly after replacing $P$ by a smaller $\nu$--invariant open neighborhoods of $K$
	and rescaling the $(\nu,J)$--invariant Riemannian metric,
	we may assume $\mathrm{Exp}_\perp$
	to be an embedding restricted to the open unit disk bundle contained in $J(TP^\nu)$.
	
	Fix an auxiliary $(\nu,J)$--invariant smooth Riemannian metric as above for $P$, and similarly for $Q$.
	For any $\delta>0$, we denote by $\mathrm{Nhd}_\delta(K^\nu)$ 
	the distance--$\delta$ open neighborhood of $K^\nu$ in $P$.
	
	Denote by $\varphi\colon J(TP^\nu)\to J(TP^\nu)$
	the vector bundle automorphism $\varphi=-J\circ f^\nu_*\circ J$,
	where $f^\nu_*\colon TP^\nu\to TP^\nu$ denotes the tangent map of $f^\nu\colon P^\nu\to P^\nu$.
	Hence, $\varphi$ is a $(\nu,J)$--invariant self-diffeomorphism of the bundle space $J(TP^\nu)$.
	The $(\nu,J)$--equivariant smooth map 
	$$\tilde{f}=\mathrm{Exp}_\perp\circ\varphi\circ\mathrm{Exp}_\perp^{-1}$$
	is well-defined on an open neighborhood of $P^\nu$ in $P$,
	with image in $Q$,
	and $\tilde{f}$ is uniformly $C^0$--close to $f$ near $K^\nu$.
	In particular, for any sufficiently small $\epsilon>0$,
	$f$ and $\tilde{f}$ are $(\nu,J)$--equivariantly smoothly homotopic
	restricted to the open subset $\mathrm{Nhd}_\epsilon(K^\nu)$ of $P$,
	as $(\nu,J)$--equivariant smooth maps to $Q$.
	For example, a $(\nu,J)$--equivariant smooth homotopy can be constructed by 
	moving along the shortest geodesic segments connecting $f(x)$ and $\tilde{f}(x)$ in $Q$.	
	
	For any sufficiently small $\epsilon>0$,
	there exists some a smooth map $\tilde{g}^\nu_0\colon P^\nu\to Q^\nu$,
	such that $\tilde{g}^\nu_0$ is smoothly homotopic to $f^\nu$,
	and such that $\tilde{g}^\nu_0$ is transverse to $S^\nu$ on $\mathrm{Nhd}_{\epsilon}(K^\nu)\cap P^\nu$.
	The existence of $\tilde{g}^\nu_0$ follows 
	from standard differential topology \cite[Chapter 3, Theorem 2.1]{Hirsch_book}.
	This yields a $(\nu,J)$--equivariant smooth map 
	$$\tilde{g}_0=\mathrm{Exp}_\perp\circ\psi\circ\mathrm{Exp}_\perp^{-1},$$
	well-defined on some open neighborhood of $P^\nu$ in $P$,
	where $\psi\colon J(TP^\nu)\to J(TP^\nu)$ denotes $-J\circ \tilde{g}^\nu_{0*}\circ J$.
	It follows that $\tilde{g}_0$ is transverse to $S$ on $K^\nu$.
	Moreover, $\tilde{g}_0$ is $(\nu,J)$--equivariantly homotopic to $\tilde{f}$ 
	restricted to some open neighborhood of $P^\nu$ in $P$,
	as the expression for defining $\tilde{g}_0$ 
	applies similarly along any smooth homotopy of $f^\nu$ to $\tilde{g}^\nu_0$,
	
	Obtain some $(\nu,J)$--equivariant smooth map $\tilde{g}_0$,
	defined nearby $P^\nu\subset P$, as above.
	Take some sufficiently small $\epsilon>0$,
	such that $\tilde{g}_0$ is $(\nu,J)$--equivariantly, smoothly homotopic to $f$
	restricted to $\mathrm{Nhd}_\epsilon(K^\nu)\subset P$.
	Take some $(\nu,J)$--equivariant smooth homotopy
	$$H\colon \mathrm{Nhd}_\epsilon(K^\nu)\times [0,1]\to Q$$
	from $f$ to $\tilde{g}_0$ restricted to $\mathrm{Nhd}_\epsilon(K^\nu)$
	(namely, $H(x,0)=f(x)$ and $H(x,1)=\tilde{g}_0(x)$ for all $x\in\mathrm{Nhd}_\epsilon(K^\nu)$).
	Take a smooth function 
	$$\lambda\colon P\to [0,1]$$
	such that $\lambda$ is constant $0$ on $P\setminus\mathrm{Nhd}_{2\epsilon/3}(K^\nu)$,
	and constant $1$ on $\mathrm{Nhd}_{\epsilon/3}(K^\nu)$.
	We obtain a $(\nu,J)$--equivariant smooth map $g_0\colon P\to Q$
	by setting
	$$g_0(x)=H(x,\lambda(x)).$$
	
	By construction,
	the $(\nu,J)$--equivariant smooth map $g_0\colon P\to Q$
	is transverse to $S$ on $K^\nu$, and identical to $f$ 
	beyond distance $\epsilon$ from $K^\nu$ in $P$,
	and $(\nu,J)$--equivariantly smoothly homotopic to $f\colon P\to Q$.
	Moreover, as $\epsilon$ can be chosen arbitrarily small,
	$g_0$ can be arbitrarily close to $f$ with respect to the Whitney $C^\infty$--topology.
	
	For the special case $K=K^\nu$, 
	our proof is done,
	setting $g=g_0$ as above.		
	
	To address the general case, 
	we further perturb $g_0$ without affecting the obtained map nearby $K^\nu$.
	To this end, observe that $g_0$ is also transverse to $S$ on 
	the $\nu$--invariant open neighborhood $\mathrm{Nhd}_{\epsilon'}(K^\nu)$
	of $K^\nu$ in $P$, for some sufficiently small $\epsilon'>0$.
	We perturb $g_0$ as follows.
	
	Denote $K'=K\setminus \mathrm{Nhd}_{\epsilon'}(K^\nu)$.
	Hence, $K'$ is a $\nu$--invariant compact subset 
	in the open subset $P\setminus P^\nu$ in $P$ of freely discontinuous $\nu$--action.
	By compactness,
	we can construct finitely many open subsets 
	$U_1,\ldots,U_m$ in $P\setminus P^\nu$,
	such that $K'$ is contained in the union of all $U_i\cup \nu(U_i)$,
	and such that each $U_i\cap \nu(U_i)$ is empty.
	For each $i=1,\ldots,m$, 
	denote $K'_i=K'\setminus\bigcup_{j\neq i}(U_j\cup \nu(U_j))$.
	Hence, each $K'_i$ is a $\nu$--invariant compact subset in $U_i\cup\nu(U_i)$.
	By standard differential topology \cite[Chapter 3, Theorem 2.1]{Hirsch_book}, 
	we can perturb $g_0$ by smooth homotopy, 
	supported within $U_1$, obtaining a smooth map transverse to $S$ 
	on some open neighborhood of $K'_1$ (in $U_1$) and on $\mathrm{Nhd}_\epsilon(K^\nu)\cap U_1$.
	Since $U_1$ is disjoint from $\nu(U_1)$, 
	the modification supported within $U_1$ 
	determines a unique $(\nu,J)$--equivariant smooth perturbation
	$g_1\colon P\to Q$ of $f$, 
	by $(\nu,J)$--equivariant smooth homotopy,
	such that $g_1$ is transverse to $S$ 
	on some $\nu$--invariant open neighborhood of $K'_1$ (in $U_1\cup\nu(U_1)$) and on $\mathrm{Nhd}_{\epsilon'}(K^\nu)$.
	Similarly, 
	a sufficiently small $(\nu,J)$--equivariant perturbation of $g_1$ supported within $U_2$ 
	yields some $g_2\colon P\to Q$ transverse to $S$ on
	some $\nu$--invariant open neighborhood of $K'_1\cup K'_2$ and on $\mathrm{Nhd}_{\epsilon'}(K^\nu)$.
	Proceed similarly to obtain $(\nu,J)$--equivariant perturbations $g_3,g_4,\ldots$ one by one,
	such that each $g_i\colon P\to Q$ is transverse to $S$ 
	on some $\nu$--invariant neighborhood of $K'_1\cup\cdots\cup K'_i$
	and on $\mathrm{Nhd}_\epsilon(K^\nu)$.
	Note that each $g_i$ keeps identical to $g_0$ on some $\nu$--invariant neighborhood of $K^\nu$ in $P$.
	In the end, set $g=g_m$. It is clear that $g\colon P\to Q$ satisfies all the desired properties.
\end{proof}

\begin{remark}\label{equivariant_transversality_literature}
	In the literature, there are many works on equivariant differential topology
	for smooth $G$--manifolds, where $G$ is assumed to be a compact Lie group.
	We refer the reader to 
	Wasserman \cite{Wasserman_edt} for an exposition of the general theory.
	The notion of $G$--equivariant transversality is somewhat subtle,
	as studied in depth by Bierstone \cite{Bierstone_equivariant} and by Field \cite{Field_transversality}.
	Even in a most naive sense, the transverse intersection of $G$--submanifolds 
	is not necessarily a $G$--submanifold, but only a stratified $G$--subspace,
	simply due to the dimension reason. 
	
	Our $(\nu,J)$--manifolds are not $G$--manifolds, because of the extra data $J$.
	In the $(\nu,J)$--equivariant setting,
	transverse preimage and transverse approximation are more well behaved,
	again because of $J$, 
	as is clear from the proofs of Lemmas \ref{nu_J_transverse_property} and \ref{nu_J_transverse_approximation}.
\end{remark}

\section{Deforming an abelian elliptic representation}\label{Sec-deform}

In this section, we prove Theorem \ref{main_deform}.
We first restate it more precisely, as Theorem \ref{deform_character} 
and Corollary \ref{deform_representation} below.
The derivation of Corollary \ref{deform_representation} from Theorem \ref{deform_character}
is well-known, and we include a proof for the reader's reference.

\begin{theorem}\label{deform_character}
	Let $K\subset S^3$ be a knot.
	Denote by $\Delta_K(t)$ the Alexander polynomial of $K$.
	Suppose $\theta_0\in(0,\pi)$.
	Denote by $\chi_0\colon\pi_1(S^3\setminus K)\to\Real$ 
	the unique reducible $\mathrm{SL}(2,\Real)$--character
	with value $2\cos\theta_0$ at the meridian of $K$.
		
	If $e^{2\cdot\theta_0\sqrt{-1}}$ is a complex zero of odd order for $\Delta_K(t)$,
	%the Alexander polynomial $\Delta_K(t)$ of $K$,
	then, $\chi_0$
	extends to some continuous path of $\mathrm{SL}(2,\Real)$--characters
	$\chi_s\colon \pi_1(S^3\setminus K)\to \Real$, 
	parametrized by $s\in[0,1]$,
	such that $\chi_s$ is irreducible for all $s\in(0,1]$.
\end{theorem}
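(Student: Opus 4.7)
The plan is to combine braid presentations with a new local modulo $2$ intersection theory in $\mathrm{SL}(2,\Complex)$--character varieties, applied in the $(\nu,J)$--equivariant setting of Section \ref{Sec-transversality}. Set $\tau_0 = 2\cos\theta_0$, so that $\tau_0 \in (-2,2)$. By Lemma \ref{make_braid_presentation} we may write $\pi_1(S^3\setminus K) \cong \Pi_\sigma$ for some $\sigma\in \mathscr{B}_n$ sending the meridian class to each generator $x_\mu$; by Lemma \ref{hom_G_picture}, the character $\chi_0 \in \mathcal{X}_{\tau_0}^{\mathtt{red}}(\Pi_\sigma,\mathrm{SL}(2,\Real))$ corresponds to a point in the intersection of the two properly embedded copies of $\mathcal{X}_{\tau_0}(F_n,\mathrm{SL}(2,\Real))$ via $r_{\mathrm{id}}^*$ and $r_\sigma^*$ inside $\mathcal{X}_{\tau_0}(WF_n,\mathrm{SL}(2,\Real))$. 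Producing a continuous deformation of $\chi_0$ amounts to producing, for arbitrarily small real perturbations $\tau$ of $\tau_0$, an irreducible intersection point near $\chi_0$ in the corresponding trace $\tau$--slice.

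I would first assemble local data (Lemma \ref{local_data}): a complex-conjugation invariant open neighborhood $T \subset \Complex\setminus\{\pm2\}$ of $\tau_0$ together with a precompact, $\nu$--invariant open neighborhood $U$ of $\chi_0$ inside the ambient parametric trace slice $\mathcal{X}_T^{\mathtt{irr}}(WF_n,\mathrm{SL}(2,\Complex))$, which is a smooth $(\nu,J)$--manifold by Example \ref{nu_J_example} and Lemma \ref{parametric_slices}. The crucial requirement on $U$ is that, after arbitrarily small $(\nu,J)$--equivariant transverse perturbations of the two submanifolds $r_{\mathrm{id}}^*\bigl(\mathcal{X}_T^{\mathtt{irr}}(F_n,\mathrm{SL}(2,\Complex))\bigr)$ and $r_\sigma^*\bigl(\mathcal{X}_T^{\mathtt{irr}}(F_n,\mathrm{SL}(2,\Complex))\bigr)$ supplied by Lemma \ref{nu_J_transverse_approximation}, their intersection inside the closure of $U$ remains in $U$ (this is Lemma \ref{not_zero}). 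The dimension counts from Lemmas \ref{slice_regularity_F} and \ref{slice_regularity_WF} show that in each real trace $\tau$--slice these transverse intersections are zero-dimensional, and Lemmas \ref{nu_J_manifold_property} and \ref{nu_J_transverse_property} let us define three local modulo $2$ intersection numbers in the trace $\tau$--slice: an $\mathrm{SL}(2,\Complex)$ count over $U$, and an $\mathrm{SL}(2,\Real)$ count and an $\mathrm{SU}(2)$ count over $U^\nu$, separated according to the real quaternion algebra type of Example \ref{q_algebra_example}.

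The key arithmetic, which is the content of Lemma \ref{half_signature_parity}, is the identity
\[
\#^{\mathrm{SL}(2,\Complex)}_\tau \;\equiv\; \#^{\mathrm{SL}(2,\Real)}_\tau + \#^{\mathrm{SU}(2)}_\tau \pmod{2},
\]
which holds because non-real irreducible complex characters are paired up by $\nu$ and therefore contribute $0$ modulo $2$ to the $\mathrm{SL}(2,\Complex)$ count over $U$. The $\mathrm{SL}(2,\Complex)$ count is locally constant in $\tau \in T\setminus\{\tau_0\}$: the intersection of the two perturbed parametric slices inside $U$ is a properly embedded smooth $1$--manifold over $T$ by Lemmas \ref{parametric_slices} and \ref{nu_J_transverse_property}, and its parametrization to $T\setminus\{\tau_0\}$ supplies a cobordism between trace slice counts along any arc avoiding $\tau_0$. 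On the $\mathrm{SU}(2)$ side, the intersection-theoretic interpretation of the Levine--Tristram signature function due to Heusener and Kroll \cite{Heusener--Kroll_abelian}, combined with Theorem \ref{sgn_function_properties}(3), shows that $\#^{\mathrm{SU}(2)}_\tau$ jumps by $1$ modulo $2$ as $\tau$ crosses $\tau_0$ on the real axis. The arithmetic identity then forces $\#^{\mathrm{SL}(2,\Real)}_\tau$ to jump by $1$ modulo $2$ as well, so on at least one side of $\tau_0$ it is nonzero; taking the perturbations arbitrarily small and using the compactness packaged in Lemma \ref{not_zero} produces genuine irreducible $\mathrm{SL}(2,\Real)$--characters of $\Pi_\sigma$ inside $U$ with meridian trace arbitrarily close to $\tau_0$.

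To upgrade this accumulation statement to the asserted continuous path, I would invoke the triangulability of the pair $\bigl(\mathcal{X}(\Pi_\sigma,\mathrm{SL}(2,\Real)),\mathcal{X}^{\mathtt{red}}(\Pi_\sigma,\mathrm{SL}(2,\Real))\bigr)$ recorded in Subsection \ref{Subsec_analytic}: irreducible characters accumulate at $\chi_0$ in a locally finite triangulated pair, so $\chi_0$ lies in the closure of some positive-dimensional simplicial branch of $\mathcal{X}^{\mathtt{irr}}(\Pi_\sigma,\mathrm{SL}(2,\Real))$, from which the path $\chi_s$ is readily extracted. The main obstacle is the compactness content of Lemmas \ref{local_data} and \ref{not_zero}, which is what legitimates the local mod $2$ counts in the non-compact $\mathrm{SL}(2,\Real)$ setting; the delicate point is to perform the perturbations $(\nu,J)$--equivariantly on the same ambient slices so that the three counts live on common perturbed submanifolds and the identity above is literally an equality of counts, which is precisely what the toolkit of Section \ref{Sec-transversality} is designed to deliver.
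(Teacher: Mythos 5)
Your overall architecture coincides with the paper's: braid presentation, the three flavors of local modulo $2$ intersection numbers on parametric trace slices organized by the $(\nu,J)$--equivariant toolkit of Section \ref{Sec-transversality}, the parity relation $\#^{\mathrm{SL}(2,\Complex)}\equiv\#^{\mathrm{SL}(2,\Real)}+\#^{\mathrm{SU}(2)}\bmod 2$, local constancy of the complex count on a punctured complex disk around $\tau_0$, Heusener--Kroll plus the signature jump on the $\mathrm{SU}(2)$ side, and triangulability of the character variety pair to extract the path. However, there is a genuine gap in your endgame, where you argue directly rather than by contradiction. First, a nonzero $\mathrm{SL}(2,\Real)$--type local count only yields intersection points of the \emph{perturbed} map with $\mathcal{S}_\tau$; these are not characters of $\Pi_\sigma$ wherever the perturbation is nontrivial, and ``taking the perturbations arbitrarily small'' needs a limiting argument you do not supply (the limit could land in the perturbed region, or on the reducible locus). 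Second, and more seriously, the localization at $\chi_0$: for the $\mathrm{SU}(2)$--type count to equal $\tfrac12\mathrm{sgn}_K$ via \cite[Theorem 1.2]{Heusener--Kroll_abelian}, the open set over which you count must contain the \emph{entire} compact $\mathrm{SU}(2)$ trace slice --- this is exactly what the paper's $\mathcal{U}_T$ in Lemma \ref{local_data} does, and it is not a small precompact neighborhood of $\chi_0$ as your $U$ is described (for a small neighborhood of $\chi_0$, neither the compactness of the intersection nor the identification of the $\mathrm{SU}(2)$ count with the half-signature holds as stated). But with $U$ that large, the $\mathrm{SL}(2,\Real)$ intersection points your parity argument produces can sit anywhere in it, and nothing forces them to accumulate at $\chi_0$, which is what the theorem requires.

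The paper resolves this tension by contradiction: assuming no irreducible $\mathrm{SL}(2,\Real)$--characters of $\Pi_\sigma$ near $\chi_0$, it chooses $T$ small and auxiliary sets $\mathcal{W}_T,\mathcal{V}_T$ so that $f_T(\mathcal{U}_T)$ avoids the image of the irreducible $\mathrm{SL}(2,\Real)$ locus altogether; then the $\mathrm{SL}(2,\Real)$ contribution vanishes identically for all $\tau\in D$, so by Lemma \ref{half_signature_parity} one gets $I_2(f_T,\mathcal{S}_{\tau_\pm};\mathcal{U}_T)\equiv\tfrac12\mathrm{sgn}_K\left(e^{2\theta_\pm\sqrt{-1}}\right)\bmod 2$ on both sides of $\tau_0$, contradicting the local constancy of the count over the connected set $D\setminus\{\tau_0\}$ together with the jump of the half-signature by $1$ modulo $2$. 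This avoidance mechanism --- which is also precisely the hypothesis of Lemma \ref{half_signature_parity} that your direct argument never verifies --- is the missing idea; without it your proof produces irreducible characters somewhere with meridian trace near $\tau_0$, but not the closure statement $\chi_0\in\overline{\mathcal{X}^{\mathtt{irr}}(\pi_1(S^3\setminus K),\mathrm{SL}(2,\Real))}$ that feeds the triangulability step.
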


%The conclusion of Theorem \ref{deform_character} can be promoted into the following form.

\begin{corollary}\label{deform_representation}
	With notations and assumptions in Theorem \ref{deform_character},
	there exists some continuous path of representations
	$\rho_s\colon \pi_1(S^3\setminus K)\to \mathrm{SL}(2,\Real)$,
	parametrized by $s\in[0,1]$, such that $\rho_0$ has character $\chi_0$,
	and such that $\rho_s$ is irreducible for all $s\in(0,1]$.
	In particular, the path $\rho_s$ lifts to a continuous path of representations
	$\tilde{\rho}_s\colon \pi_1(S^3\setminus K)\to \widetilde{\mathrm{SL}}(2,\Real)$.
\end{corollary}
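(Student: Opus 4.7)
The plan is to derive Corollary \ref{deform_representation} from Theorem \ref{deform_character} in two stages: first lift the path of characters $\chi_s$ to a path of representations $\rho_s$ in $\mathrm{SL}(2,\Real)$, then lift that path once more to the universal cover $\widetilde{\mathrm{SL}}(2,\Real)\to\mathrm{SL}(2,\Real)$.

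For the first stage, I would invoke the weak path lifting property for the character map $\mathcal{R}(\Pi,\mathrm{SL}(2,\Real))\to\mathcal{X}(\Pi,\mathrm{SL}(2,\Real))$ recalled in Subsection \ref{Subsec_analytic}, applied with $\Pi=\pi_1(S^3\setminus K)$ and the path $\chi_s$ from Theorem \ref{deform_character}. This yields a continuous path $\rho_s\colon\Pi\to\mathrm{SL}(2,\Real)$ with $\chi_{\rho_s}=\chi_s$. Since irreducibility of a representation is determined by its character, $\rho_s$ is irreducible for each $s\in(0,1]$, while $\rho_0$ has the reducible abelian character $\chi_0$ of noncentral elliptic type; after a global $\mathrm{SL}(2,\Real)$--conjugation, I may assume $\rho_0$ takes values in $\mathrm{SO}(2)$, with meridians mapping to rotations by $\pm\theta_0$.

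For the second stage, I would first construct an initial lift $\tilde{\rho}_0\colon\Pi\to\widetilde{\mathrm{SL}}(2,\Real)$. Since $\rho_0$ is abelian, it factors through the abelianization $H_1(S^3\setminus K;\Integral)\cong\Integral$, which is generated by the meridian class. Lifting the image of the meridian in $\mathrm{SO}(2)$ to any preimage in $\widetilde{\mathrm{SL}}(2,\Real)$ and extending via the freeness of $\Integral$ produces $\tilde{\rho}_0$. Then, fixing a finite generating tuple $g_1,\ldots,g_n$ for $\Pi$ (for instance, meridional generators from a braid presentation as in Section \ref{Sec-braid}), I apply coordinatewise path lifting in the covering $\widetilde{\mathrm{SL}}(2,\Real)\to\mathrm{SL}(2,\Real)$ to obtain continuous paths $\tilde{\rho}_s(g_i)\in\widetilde{\mathrm{SL}}(2,\Real)$ that cover $\rho_s(g_i)$ and start at $\tilde{\rho}_0(g_i)$.

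The step I expect to be the main obstacle is the verification that the assignment $g_i\mapsto\tilde{\rho}_s(g_i)$ extends, for every $s\in[0,1]$, to a group homomorphism $\tilde{\rho}_s\colon\Pi\to\widetilde{\mathrm{SL}}(2,\Real)$. I would handle this by a continuity-of-relators argument. For any relator $w=w(g_1,\ldots,g_n)$ of $\Pi$, the element $w(\tilde{\rho}_s(g_1),\ldots,\tilde{\rho}_s(g_n))\in\widetilde{\mathrm{SL}}(2,\Real)$ is continuous in $s$, and its projection to $\mathrm{SL}(2,\Real)$ equals $w(\rho_s(g_1),\ldots,\rho_s(g_n))=\mathbf{1}$; hence it lies in the discrete central fiber $\Integral$ of the covering, and by continuity and discreteness it is independent of $s$. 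At $s=0$ it equals $\mathbf{1}\in\widetilde{\mathrm{SL}}(2,\Real)$ by construction of $\tilde{\rho}_0$, so it equals $\mathbf{1}$ for every $s$, and the coordinatewise paths $\tilde{\rho}_s(g_i)$ assemble into the desired continuous family of $\widetilde{\mathrm{SL}}(2,\Real)$--representations.
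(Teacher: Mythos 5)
Your proposal is correct and follows essentially the same route as the paper: lift the character path $\chi_s$ to a representation path $\rho_s$ via the weak path lifting property from Subsection \ref{Subsec_analytic}, lift $\rho_0$ to $\widetilde{\mathrm{SL}}(2,\Real)$ by factoring through the infinite cyclic abelianization, and then lift the whole path using path lifting in the covering, with the preservation of relations justified by discreteness of the fiber (the paper phrases this via uniqueness of path lifts, which amounts to the same continuity argument you spell out).
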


\begin{proof}
Obtain a path of $\mathrm{SL}(2,\Real)$--characters 
$\chi_s\colon\pi_1(S^3\setminus K)\to \Real$ as in Theorem \ref{deform_character}.
By the weak path lifting property of the continuous map
$\mathcal{R}(\pi_1(S^3\setminus K),\mathrm{SL}(2,\Real))\to \mathcal{X}(\pi_1(S^3\setminus K),\mathrm{SL}(2,\Real))$
(see Subsection \ref{Subsec_analytic}),
we can lift the path $\chi_s$ to obtain
some path $\rho_s\colon\pi_1(S^3\setminus K)\to \mathrm{SL}(2,\Real)$,
which is as asserted.

Note that the $\rho_0$--image of the meridian is noncentral and elliptic
(with eigenvalues $e^{\pm\theta_0\sqrt{-1}}$),
so $\rho_0$ factors through the abelianization of $\pi_1(S^3\setminus K)$,
which is infinite cyclic.
Therefore, $\rho_0$ lifts to a representation 
$\tilde{\rho}_0\colon \pi_1(S^3\setminus K)\to \widetilde{\mathrm{SL}}(2,\Real)$.
The unique path lifting property of the covering projection
$\widetilde{\mathrm{SL}}(2,\Real)\to \mathrm{SL}(2,\Real)$
implies that $\tilde{\rho}_0$ extends to be a unique path $\tilde{\rho}_s$,
lifting the path $\rho_s$ with the lifted base point $\tilde{\rho}_0$.
To be more precise, for any element $g\in\pi_1(S^3\setminus K)$,
the path $\tilde{\rho}_s(g)$ in $\widetilde{\mathrm{SL}}(2,\Real)$ is
the unique lift of the path $\rho_s(g)$ in $\mathrm{SL}(2,\Real)$
with the lifted base point $\tilde{\rho}_0(g)$.
The unique path lifting property also makes sure that any relation between the group elements is preserved.
Hence, the path $\rho_s$ can be lifted to a path $\tilde{\rho}_s$ as asserted,
and indeed,
the lift depends only on a chosen lift of the $\rho_0$--image of the meridian.
\end{proof}

The rest of this section is devoted to the proof of Theorem \ref{deform_character}.

We first recall the notion of local intersection number in standard differential topology.
Suppose that $P$ and $Q$ are manifolds (without boundary and of designated dimension).
Suppose that $S$ is an closed smooth submanifold in $Q$ of codimension equal to the dimension of $P$.
Suppose that $U$ is an open subset in $P$.
For any smooth map $f\colon P\to Q$, such that $f^{-1}(S)\cap U$ is compact,
the \emph{local modulo $2$ intersection number} of $f$ with $S$ on $U$ 
is a well-defined integer modulo $2$,
denoted as
$$I_2(f,S;U)\in\Integral/2\Integral.$$
To be precise, $I_2(f,S;U)$ is defined to be the parity count of $g^{-1}(S)\cap U$,
where $g\colon P\to Q$ is any smooth map smoothly homotopic to $f$ 
supported on a compact neighborhood of $f^{-1}(S)\cap U$ in $U$, 
such that $g$ is a smooth map transverse to $S$ on $U$.
The compactness and the transversality imply that $g^{-1}(S)\cap U$
is a finite set of points.
The resulting parity of the number of points in $g^{-1}(S)\cap U$ 
does not depend on the choice of $g$,
subject to the required conditions.
See \cite[Chapter 5, Section 2]{Hirsch_book} for a reference.

%
%The sign is assigned to be positive at a point $x\in g^{-1}(S)\cap U$
%if the orientation of $g_*T_xP\oplus T_{g(x)}S$ agrees with the orientation of $T_{g(x)}Q$,
%or negative otherwise.

\begin{lemma}\label{local_data}
	Let $\sigma\in\mathscr{B}_n$ be any braid group element.
	Suppose that $T\subset\Complex\setminus\{\pm2\}$
	is some complex-conjugation invariant, open subset.
	Adopt Notations \ref{notation_braid} and \ref{notation_slice}.
	Denote 
	$\mathcal{P}_T=\mathcal{X}^{\mathtt{irr}}_T\left(F_n,\mathrm{SL}(2,\Complex)\right)$,
	and
	$\mathcal{Q}_T=\mathcal{X}^{\mathtt{irr}}_T\left({WF}_n,\mathrm{SL}(2,\Complex)\right)$.
	For any $\tau\in T$, denote by	$\mathcal{S}_\tau$ the image of 
	$\mathcal{X}^{\mathtt{irr}}_\tau\left(F_n,\mathrm{SL}(2,\Complex)\right)$
	under $r^*_{\mathrm{id}}\colon\mathcal{P}_T\to \mathcal{Q}_T$.
	View $T$, $\mathcal{P}_T$, and $\mathcal{Q}_T$ as 
	$(\nu,J)$--smooth manifolds,
	%$(\nu,J)$--equivariant smooth bundles over $T$,
	and $\mathcal{S}_\tau$ as a closed smooth submanifold of $\mathcal{Q}_T$,
	according to Example \ref{nu_J_example}.
	
	For any $\tau_0\in T$,
	there exists some tuple $(f_T,\mathcal{U}_T,D)$,
	depending on $(\sigma,T,\tau_0)$, as follows.
	\begin{itemize}
	\item
	The item $f_T\colon \mathcal{P}_T\to\mathcal{Q}_T$ is a $(\nu,J)$--equivariant smooth map,
	such that $f_T$ is $(\nu,J)$--equivariantly smoothly homotopic to 
	$r^*_\sigma\colon\mathcal{P}_T\to\mathcal{Q}_T$ supported 
	on some $\nu$--invaraint compact subset in $\mathcal{P}_T$.
	\item
	The item $\mathcal{U}_T\subset\mathcal{P}_T$ is 
	a $\nu$--invariant open subset in $\mathcal{P}_T$,
	such that $\mathcal{U}_T$ contains $\mathcal{X}^{\mathtt{irr}}_T(F_n,\mathrm{SU}(2))$,
	and such that $\mathcal{U}_T\cup \mathcal{X}^{\mathtt{red}}_T(F_n,\mathrm{SL}(2,\Complex))$
	is open in $\mathcal{X}_T(F_n,\mathrm{SL}(2,\Complex))$.	
	\item
	The item $D\subset T$ is a $\nu$--invariant open neighborhood of $\tau_0$,
	such that, for any $\tau\in D$, 
	the closure of $f^{-1}_T(\mathcal{S}_\tau)\cap\mathcal{U}_T$ 
	in $\mathcal{U}_T\cup\mathcal{X}^{\mathtt{red}}_T(F_n,\mathrm{SL}(2,\Complex))$
	is compact.
	\end{itemize}
	
	Moreover,
	given any complex-conjugation invariant, open neighborhood $\mathcal{W}_T$ of
	$\mathcal{X}^{\mathtt{red}}_T(F_n,\mathrm{SL}(2,\Complex))\cup \mathcal{X}_T(F_n,\mathrm{SU}(2))$
	in $\mathcal{X}_T(F_n,\mathrm{SL}(2,\Complex))$,	
	and given any $\nu$--invariant, closed subset $\mathcal{V}_T$ in $\mathcal{Q}_T$
	with $r^*_\sigma(\mathcal{P}_T\cap\mathcal{W}_T)\cap\mathcal{V}_T=\emptyset$,
	one may require 
	$f_T=r_\sigma^*$ outside some compact subset in $\mathcal{P}_T\cap\mathcal{W}_T$,
	and $f_T(\mathcal{P}_T\cap\mathcal{W}_T)\cap\mathcal{V}_T=\emptyset$,
	and $\mathcal{U}_T\subset \mathcal{P}_T\cap\mathcal{W}_T$.
\end{lemma}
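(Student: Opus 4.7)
The plan is to construct $(f_T,\mathcal{U}_T,D)$ by making a sequence of careful choices, relying on the $(\nu,J)$-equivariant transverse approximation (Lemma \ref{nu_J_transverse_approximation}) and the smooth parametric structure of the trace slices (Lemma \ref{parametric_slices}). Throughout, I would fix auxiliary $(\nu,J)$-invariant smooth Riemannian metrics on $\mathcal{P}_T$ and $\mathcal{Q}_T$, constructed as in the proof of Lemma \ref{nu_J_transverse_approximation} by averaging a $\nu_*$-invariant, $J$-invariant inner product on $TP|_{P^\nu}$ out to a $\nu$-invariant metric.

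First I would build $\mathcal{U}_T$. The compactness fact that $\mathcal{X}^{\mathtt{irr}}_\tau(F_n,\mathrm{SU}(2))\subset [-2,2]^N$ is compact for each $\tau$, together with the bundle structure of the parametrization map over $T$, means that $\mathcal{X}^{\mathtt{irr}}_T(F_n,\mathrm{SU}(2))$ sits locally compactly near $\tau_0$. I would define $\mathcal{U}_T$ as the union of two pieces: (a) a small $(\nu,J)$-invariant tubular neighborhood of radius $\epsilon$ around $\mathcal{X}^{\mathtt{irr}}_T(F_n,\mathrm{SU}(2))$, and (b) for each reducible character in $\mathcal{X}^{\mathtt{red}}_T(F_n,\mathrm{SL}(2,\Complex))$ (a discrete-per-slice locus by Lemma \ref{reducible_finiteness} and a locally finite cover in the parametric version by Lemma \ref{parametric_slices}(4)), a small $\nu$-invariant open neighborhood in $\mathcal{X}_T(F_n,\mathrm{SL}(2,\Complex))$ intersected with the irreducible locus. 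By construction $\mathcal{U}_T$ is $\nu$-invariant, contains $\mathcal{X}^{\mathtt{irr}}_T(F_n,\mathrm{SU}(2))$, and $\mathcal{U}_T\cup\mathcal{X}^{\mathtt{red}}_T(F_n,\mathrm{SL}(2,\Complex))$ is open in $\mathcal{X}_T(F_n,\mathrm{SL}(2,\Complex))$ because we have added a full neighborhood of each reducible. I would arrange $\mathcal{U}_T\subset\mathcal{P}_T\cap\mathcal{W}_T$ from the start by shrinking $\epsilon$ and the per-reducible neighborhoods, since $\mathcal{W}_T$ is an open neighborhood of $\mathcal{X}^{\mathtt{red}}_T\cup\mathcal{X}_T(F_n,\mathrm{SU}(2))$.

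Next I would produce $f_T$. I would apply Lemma \ref{nu_J_transverse_approximation} to $r^*_\sigma\colon\mathcal{P}_T\to\mathcal{Q}_T$ and the closed $(\nu,J)$-submanifold $\mathcal{S}_{\tau_0}$, with $\nu$-invariant compact set $K$ taken to be the closure of $\mathcal{U}_T$ intersected with a compact $\tau$-neighborhood of $\tau_0$. This yields a $(\nu,J)$-equivariant smooth map $f_T$, equal to $r^*_\sigma$ outside a compact subset of $\mathcal{P}_T\cap\mathcal{W}_T$, and transverse to $\mathcal{S}_{\tau_0}$ on a $\nu$-invariant open neighborhood of $K$. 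The Whitney $C^\infty$-smallness clause of Lemma \ref{nu_J_transverse_approximation} allows me to simultaneously enforce the side constraint $f_T(\mathcal{P}_T\cap\mathcal{W}_T)\cap\mathcal{V}_T=\emptyset$: since the closed set $\mathcal{V}_T$ is already disjoint from the $r^*_\sigma$-image of $\mathcal{P}_T\cap\mathcal{W}_T$, a sufficiently small perturbation preserves this open condition.

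Finally I would choose $D$. Because the family $\{\mathcal{S}_\tau\}_{\tau\in T}$ varies as a smooth $(\nu,J)$-equivariant family fibering over $T$ (Lemma \ref{parametric_slices}(3)), transversality of $f_T$ to $\mathcal{S}_{\tau_0}$ on the compact $K$ persists on a small open $\nu$-invariant neighborhood $D$ of $\tau_0$. It remains to verify the compactness clause: any sequence $x_k\in f_T^{-1}(\mathcal{S}_{\tau_k})\cap\mathcal{U}_T$ with $\tau_k\in D$ subconverging must have limit in $\mathcal{U}_T\cup\mathcal{X}^{\mathtt{red}}_T$. The main obstacle of the proof is exactly this step. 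The analysis splits: a subsequential limit $x_\infty$ either lies in $\mathcal{U}_T$ (done), or lies on $\partial\mathcal{U}_T$. In the latter case I would argue that $x_\infty$ must be a reducible character, as follows. Near reducibles in the SU(2)-tube piece, the portion of $\mathcal{U}_T$ defined by per-reducible neighborhoods absorbs all nearby irreducibles, so the only escape route is across the SU(2)-tube boundary at positive distance from reducibles. But on this latter locus the approximation $f_T$ was made transverse to $\mathcal{S}_{\tau_0}$ on all of $K$, and transversality plus compactness of $K$ force the intersection to stay finite and, after shrinking $D$, to stay confined to a fixed $\epsilon/2$-tube around $\mathcal{X}^{\mathtt{irr}}_T(F_n,\mathrm{SU}(2))$ strictly inside $\mathcal{U}_T$. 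Balancing $\epsilon$ against the diameter of $D$ via these compactness estimates completes the construction.
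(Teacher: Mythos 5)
Your overall strategy (perturb $r^*_\sigma$ equivariantly via Lemma \ref{nu_J_transverse_approximation} on a compact piece, then shrink $D$ and check that intersection points cannot escape) has the right flavor, but the two decisive steps fail as written. First, the set $K$ you feed into Lemma \ref{nu_J_transverse_approximation} --- the closure of your $\mathcal{U}_T$ over a compact parameter window --- is not a compact subset of $\mathcal{P}_T$: both the $\epsilon$--tube around $\mathcal{X}^{\mathtt{irr}}_T(F_n,\mathrm{SU}(2))$ and the punctured per-reducible neighborhoods accumulate on the reducible locus, which lies outside the manifold $\mathcal{P}_T=\mathcal{X}^{\mathtt{irr}}_T(F_n,\mathrm{SL}(2,\Complex))$ on which $f_T$ lives. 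So the transverse-approximation lemma cannot be invoked on (a neighborhood of) all of $\mathcal{U}_T$; moreover, since any admissible $f_T$ agrees with $r^*_\sigma$ off a compact subset of $\mathcal{P}_T\cap\mathcal{W}_T$, hence on a whole neighborhood of the reducibles, transversality there is simply not at your disposal --- and it is also not needed, because the statement deliberately takes the closure in $\mathcal{U}_T\cup\mathcal{X}^{\mathtt{red}}_T(F_n,\mathrm{SL}(2,\Complex))$, i.e.\ accumulation at reducibles is allowed (ruling it out is the job of Lemma \ref{not_zero}, under $\Delta_K(\lambda^2)\neq0$). Second, your no-escape argument is not valid: transversality on a compact set makes the intersection there finite, but it does not ``confine'' intersection points to an $\epsilon/2$--tube for $\tau\in D$, and you never address the other escape route, namely the outer boundaries of the per-reducible neighborhoods away from the $\mathrm{SU}(2)$--tube. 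What is needed, and what ``balancing $\epsilon$ against the diameter of $D$'' silently presupposes, is a barrier: a region inside $\mathcal{U}_T$, at positive distance from the reducibles, that contains no intersection points for every $\tau\in D$.

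The paper manufactures exactly such a barrier and needs transversality only on a genuinely compact shell. It fixes a compact parameter neighborhood $T_0\ni\tau_0$ and a conjugation-invariant proper function $\lambda\geq0$ on $\mathcal{X}_{T_0}(F_n,\mathrm{SL}(2,\Complex))$ vanishing on the reducibles, normalized so that $\mathcal{E}_{[0,1]}=\lambda^{-1}([0,1])$ contains $\mathcal{X}_{T_0}(F_n,\mathrm{SU}(2))$ and $\mathcal{E}_{[0,3]}\subset\mathcal{W}_T$; it then perturbs $r^*_\sigma$ only on the compact shell $\mathcal{E}_{[2,3]}\subset\mathcal{P}_T\cap\mathcal{W}_T$. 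Because $\operatorname{codim}\mathcal{S}_{\tau_0}=4n-4=\dim\mathcal{P}_T$, the transverse preimage meets $\mathcal{E}_{[2,3]}$ in finitely many points, so one can choose a band $[a_0,b_0]\subset(2,3)$ with $f_T^{-1}(\mathcal{S}_{\tau_0})\cap\mathcal{E}_{[a_0,b_0]}=\emptyset$, and then shrink $D$ so that for all $\tau\in D$ the map stays transverse on $\mathcal{E}_{[2,3]}$ and the band stays empty. With $\mathcal{U}_T=(\mathcal{P}_T\cap\mathcal{W}_T)\setminus\mathcal{E}_{[b_0,+\infty)}$, any point of $f_T^{-1}(\mathcal{S}_\tau)\cap\mathcal{U}_T$ (outside the perturbation support it has parameter $\tau\in T_0$, so $\lambda$ is defined on it) must satisfy $\lambda<a_0$, hence the closure lies in the compact set $\mathcal{E}_{[0,a_0]}\subset\mathcal{U}_T\cup\mathcal{X}^{\mathtt{red}}_T(F_n,\mathrm{SL}(2,\Complex))$. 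You could salvage your tube picture by running the same empty-band trick on compact shells around the boundaries of your tube and of each per-reducible neighborhood, but as written the proposal has a genuine gap at precisely the point the lemma is designed to settle.
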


\begin{proof}
	Given $\mathcal{W}_T$ and $\mathcal{V}_T$ as assumed,
	we construct as follows.
	
	Fix some auxiliary $\nu$--invariant compact neighborhood of $\tau_0$ in $T$, denoted as
	$$T_0\subset T.$$
	Since the parametric trace $T_0$--slice
	$\mathcal{X}^{\mathtt{red}}_{T_0}(F_n,\mathrm{SL}(2,\Complex))$ is compact (Lemma \ref{parametric_slices}),
	%(Example \ref{Subsec_analytic}, Notation \ref{notation_slice}, and Lemma \ref{parametric_slices}).
	we can construct some auxiliary proper continuous function 
	$$\lambda\colon \mathcal{X}_{T_0}(F_n,\mathrm{SL}(2,\Complex))\to[0,+\infty),$$
	such that 
	$\lambda$ is invariant under complex conjugation of $\mathrm{SL}(2,\Complex)$--characters,
	and such that
	$\lambda$ is equal to $0$ on $\mathcal{X}^{\mathtt{red}}_{T_0}(F_n,\mathrm{SL}(2,\Complex))$.
	For any subset $J\subset[0,+\infty)$, we denote 
	$$\mathcal{E}_{J}=\lambda^{-1}(J).$$
	Note that for any closed subset $J\subset(0,+\infty)$,
	$\mathcal{E}_{J}$ is a $\nu$--invariant closed subset in $\mathcal{P}_T$,
	compact if $J$ is compact.
	Since the parametric trace $T_0$--slice 
	$\mathcal{X}_{T_0}(F_n,\mathrm{SU}(2))$ is compact
	(see Subsection \ref{Subsec_analytic} and Notation \ref{notation_slice}),
	we assume without loss of generality that $\mathcal{E}_{[0,1]}$
	contains $\mathcal{X}_{T_0}(F_n,\mathrm{SU}(2))$,
	and $\mathcal{E}_{[0,3]}$ is contained in $\mathcal{W}_T$.
	
	By $(\nu,J)$--equivariant perturbation (Lemma \ref{nu_J_transverse_approximation}), 
	we obtain some $(\nu,J)$--equivariant proper smooth map,
	denoted as
	$$f_T\colon\mathcal{P}_T\to \mathcal{Q}_T,$$
	such that $f_T$ is transverse to $\mathcal{S}_{\tau_0}$ 
	on the compact subset $\mathcal{E}_{[2,3]}\subset\mathcal{P}_T\cap\mathcal{W}_T$.
	Moreover, we can make sure that
	$f_T$ is $(\nu,J)$--equivariantly smoothly homotopic to $r^*_\sigma$,
	supported on 
	some $\nu$--invariant compact neighborhood of $\mathcal{E}_{[2,3]}$ in $\mathcal{P}_T\cap\mathcal{W}_T$,
	such that the image of the support under $f_T$ 
	stays disjoint from $\mathcal{V}_T$ in $\mathcal{Q}_T$. 
	
	Note that $\mathcal{S}_{\tau_0}$ has codimension $(8n-10)-(4n-6)=4n-4$ in $\mathcal{Q}_T$,
	which is equal to the dimension of $\mathcal{P}_T$ (Lemmas \ref{slice_regularity_F} and \ref{parametric_slices}).
	Therefore, $f^{-1}_T(\mathcal{S}_{\tau_0})$ intersects some $\nu$--invariant open neighborhood of $\mathcal{E}_{[2,3]}$
	as a closed $(\nu,J)$--submanifold of dimension $0$ therein.
	The compactness of $\mathcal{E}_{[2,3]}$ implies that
	$f^{-1}_T(\mathcal{S}_{\tau_0})\cap\mathcal{E}_{[2,3]}$ is a finite set of points.
	Obtain some short compact interval, denoted as
	$$[a_0,b_0]\in(2,3),$$
	such that $f^{-1}_T(\mathcal{S}_{\tau_0})\cap\mathcal{E}_{[a_0,b_0]}$ is empty.
	By transversality and $(\nu,J)$--equivariance,
	there exists for some sufficiently small $\nu$--invariant open neighborhood of $\tau_0$, 
	denoted as $$D\subset T,$$
	such that for any $\tau\in D$,
	$f_T\colon\mathcal{P}_T\to\mathcal{Q}_T$ 
	is transverse to $\mathcal{S}_\tau$ on $\mathcal{E}_{[2,3]}$,
	and such that 
	$f^{-1}_T(\mathcal{S}_{\tau})\cap\mathcal{E}_{[a_0,b_0]}$ is empty.
	We obtain a $\nu$--invariant open subset of $\mathcal{P}_T$, denoted as
	$$\mathcal{U}_T\subset\mathcal{P}_T,$$
	by setting 
	$\mathcal{U}_T=(\mathcal{P}_T\cap\mathcal{W}_T)\setminus\mathcal{E}_{[b_0,+\infty)}$.
	
	We verify the asserted properties for the tuple $(f_T,\mathcal{U}_T,D)$, as follows.	
	The asserted properties regarding $f_T$ is obvious by construction.
	The union of $\mathcal{U}_T$ and $\mathcal{X}^{\mathtt{red}}_T(F_n,\mathrm{SL}(2,\Complex))$
	is an open neighborhood of 
	$\mathcal{X}^{\mathtt{red}}_T(F_n,\mathrm{SL}(2,\Complex))\cup \mathcal{X}_T(F_n,\mathrm{SU}(2))$
	in $\mathcal{X}_T(F_n,\mathrm{SL}(2,\Complex))$.
	Indeed, its complement is the union of the closed subset $\mathcal{P}_T\setminus \mathcal{W}_T$
	with $\mathcal{E}_{[b_0,+\infty)}$,
	where $\mathcal{E}_{[b_0,+\infty)}$ is 
	closed in the closed subset $\mathcal{X}_{V_0}(F_n,\mathrm{SL}(2,\Complex))$,
	and $\mathcal{E}_{[b_0,+\infty)}$ has empty intersection with 
	$\mathcal{X}_T(F_n,\mathrm{SU}(2))$.
	Observe that $f_T\colon\mathcal{P}_T\to\mathcal{Q}_T$ extends to be a continuous map 
	$\mathcal{X}_T(F_n,\mathrm{SL}(2,\Complex))\to \mathcal{X}_T(F_n,\mathrm{SL}(2,\Complex))$,
	agreeing with $r^*_\sigma\colon \mathcal{X}_T(F_n,\mathrm{SL}(2,\Complex))\to \mathcal{X}_T(F_n,\mathrm{SL}(2,\Complex))$ 
	on the compact subset $\mathcal{E}_{[0,1]}$.
	For any $\tau\in D$,
	the closure of $f^{-1}_T(\mathcal{S}_\tau)\cap\mathcal{U}_T$ in $\mathcal{X}_T(F_n,\mathrm{SL}(2,\Complex))$
	is contained in the uniform compact subset $\mathcal{E}_{[0,a_0]}$,
	which is contained in $\mathcal{U}_T\cup \mathcal{X}^{\mathtt{red}}_T(F_n,\mathrm{SL}(2,\Complex))$.
	Hence, $f^{-1}_T(\mathcal{S}_\tau)\cap\mathcal{U}_T$ has compact closure
	$\mathcal{U}_T\cup \mathcal{X}^{\mathtt{red}}_T(F_n,\mathrm{SL}(2,\Complex))$.
	Therefore, the tuple $(f_T,\mathcal{U}_T,D)$ is as desired.
	Moreover, the additional requirements 
	with respect to $\mathcal{W}_T$ and $\mathcal{V}_T$ are also satisfied.
\end{proof}

\begin{lemma}\label{not_zero}
	Let $K\subset S^3$ be a knot.
	Denote by $\Delta_K(t)$ the Alexander polynomial of $K$.
	Fix some orientation of $K$ and some braid group element $\sigma\in\mathscr{B}_n$
	for presenting $\pi_1(S^3\setminus K)$, as declared in Lemma \ref{make_braid_presentation}.
	Let $(f_T,\mathcal{U}_T,D)$ be any tuple with respect to some $(T,\tau_0)$, 
	as declared in Lemma \ref{local_data}.
	Retain the notations $\mathcal{P}_T$, $\mathcal{Q}_T$, and $\mathcal{S}_T$ in Lemma \ref{local_data}.
	
	For any $\tau\in D$, 
	and for any $\lambda\in\Complex\setminus\{0\}$ satisfying $\lambda+\lambda^{-1}=\tau$,
	the following statement holds.
	If $\Delta_K\left(\lambda^2\right)\neq0$,
	then $f^{-1}_T(\mathcal{S}_\tau)\cap\mathcal{U}_T$ is compact.
\end{lemma}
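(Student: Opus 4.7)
The plan is to verify compactness by ruling out reducible limit points. By Lemma \ref{local_data}, the closure of $f_T^{-1}(\mathcal{S}_\tau)\cap\mathcal{U}_T$ inside $\mathcal{U}_T\cup\mathcal{X}^{\mathtt{red}}_T(F_n,\mathrm{SL}(2,\Complex))$ is already compact. Hence compactness of $f_T^{-1}(\mathcal{S}_\tau)\cap\mathcal{U}_T$ itself reduces to showing this closure contains no reducible character. I will argue by contradiction: a reducible limit point will produce, via Burde-de Rham, a zero of $\Delta_K$ at $\lambda^2$, contradicting the hypothesis.

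The key technical input from the construction in Lemma \ref{local_data} is that $f_T$ agrees with $r^*_\sigma$ on a $\nu$--invariant open neighborhood of $\mathcal{X}^{\mathtt{red}}_T(F_n,\mathrm{SL}(2,\Complex))$, since the perturbation producing $f_T$ from $r^*_\sigma$ is supported away from the reducible locus. For a hypothetical sequence $\chi_n\to\chi_\infty$ of points in $f_T^{-1}(\mathcal{S}_\tau)\cap\mathcal{U}_T$ with reducible limit, this gives $r^*_\sigma(\chi_n)=f_T(\chi_n)\in r^*_{\mathrm{id}}(\mathcal{X}^{\mathtt{irr}}_\tau(F_n,\mathrm{SL}(2,\Complex)))$ for $n$ large. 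Lemma \ref{hom_G_picture}(2) identifies $r^*_\sigma(\mathcal{X}(F_n,G))\cap r^*_{\mathrm{id}}(\mathcal{X}(F_n,G))$ with $p^*_\sigma(\mathcal{X}(\Pi_\sigma,G))$, so injectivity of $r^*_\sigma$ produces a unique irreducible $\mathrm{SL}(2,\Complex)$--character $\chi'_n$ of $\Pi_\sigma=\pi_1(S^3\setminus K)$ with $\chi_n=q^*_\sigma(\chi'_n)$. Evaluating $r^*_\sigma(\chi_n)$ at each generator $x_i\in WF_n$ forces $\chi_n(x_i)=\tau$ for all $i$, so each $\chi'_n$ sends the meridian of $K$ to trace $\tau=\lambda+\lambda^{-1}$.

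Letting $n\to\infty$ produces a reducible (abelian) $\mathrm{SL}(2,\Complex)$--character $\chi'_\infty$ of $\pi_1(S^3\setminus K)$ with meridian eigenvalues $\lambda,\lambda^{-1}$, approximated by the irreducible characters $\chi'_n$. Weak path lifting (Subsection \ref{Subsec_analytic}) then realizes the abelian $\mathrm{SL}(2,\Complex)$--representation with meridian eigenvalues $\lambda,\lambda^{-1}$ as a limit of irreducible (hence nonabelian) $\mathrm{SL}(2,\Complex)$--representations of $\pi_1(S^3\setminus K)$. By Theorem \ref{deform_nonabelian_facts}(1) (Burde-de Rham), $\lambda^2$ must be a zero of $\Delta_K$, contradicting the hypothesis. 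The step requiring the most care is the middle one: extracting simultaneously, from the single condition $f_T(\chi_n)\in\mathcal{S}_\tau$ together with $f_T=r^*_\sigma$ near the reducible locus, both the $\sigma$--invariance of $\chi_n$ (so that it descends to $\Pi_\sigma$) and the precise meridian trace $\tau$ of the descent; once this packaging is in place, the deformation theorem yields the contradiction directly.
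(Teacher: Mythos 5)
Your overall strategy matches the paper's up to the final rigidity input: the paper also reduces compactness, via the third bullet of Lemma \ref{local_data}, to showing that $f_T^{-1}(\mathcal{S}_\tau)\cap\mathcal{U}_T$ stays away from $\mathcal{X}^{\mathtt{red}}_T(F_n,\mathrm{SL}(2,\Complex))$, and it likewise exploits that $f_T=r^*_\sigma$ outside a compact subset of $\mathcal{P}_T$, so that near the reducible locus the intersection points are exactly irreducible $\mathrm{SL}(2,\Complex)$--characters of $\Pi_\sigma\cong\pi_1(S^3\setminus K)$ with meridian trace $\tau$. Where you diverge is the endgame: the paper quotes the local rigidity theorem of Heusener--Porti--Su\'ares-Peir\'o \cite[Theorem 2.7]{HPS_reducible} together with Burde--de Rham to conclude that, when $\Delta_K(\lambda^2)\neq0$, the reducible character $\chi_\tau$ is a smooth point all of whose nearby characters are reducible; isolation from the reducible locus is then immediate, with no limiting argument at all. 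You instead run a contradiction through Theorem \ref{deform_nonabelian_facts}(1), the deformation-limit form of Burde--de Rham. (Minor point: Lemma \ref{hom_G_picture} is stated for representation spaces, not character spaces; the character-level identification you use is the one the paper also asserts, and it is valid for irreducible characters, which is all you need.)

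The gap is in the sentence converting your sequence into input for Theorem \ref{deform_nonabelian_facts}(1). That statement requires a \emph{continuous family} of nonabelian representations converging to the abelian representation $\rho_\lambda$, whereas you only have a sequence of irreducible characters $\chi'_n\to\chi'_\infty$. Weak path lifting does not bridge this by itself: first you need a \emph{path} of irreducible characters limiting to $\chi'_\infty$, which requires the local path-connectedness supplied by triangulability of the pair $(\mathcal{X},\mathcal{X}^{\mathtt{red}})$ (Subsection \ref{Subsec_analytic}; this is exactly how the paper passes from a closure point to a path in the proof of Theorem \ref{deform_character}); second, the lifted path of representations has as its limit only \emph{some} representation with character $\chi'_\infty$, which a priori could be reducible nonabelian rather than the abelian $\rho_\lambda$ --- you must invoke Burde--de Rham once more (under the standing hypothesis $\Delta_K(\lambda^2)\neq0$, every representation with character $\chi_\tau$ is abelian) and then conjugate the whole family so its limit is $\rho_\lambda$ itself, since Theorem \ref{deform_nonabelian_facts}(1) is stated for that specific limit. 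With these two patches, both available from facts already in the paper, your contradiction goes through; alternatively, quoting \cite[Theorem 2.7]{HPS_reducible} as the paper does dispenses with sequences, paths, and lifting altogether.
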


\begin{proof}
	For any $\tau\in D$,
	the properties in Lemma \ref{local_data} guarantees that
	the subset $f^{-1}_T(\mathcal{S}_\tau)\cap\mathcal{U}_T$ is closed in $\mathcal{U}_T$,
	and is precompact in
	$\mathcal{U}_T\cup \mathcal{X}^{\mathtt{red}}_T(F_n,\mathrm{SL}(2,\Complex))$.
	It will follow that $f^{-1}_T(\mathcal{S}_\tau)\cap\mathcal{U}_T$ is compact,
	if $f^{-1}_T(\mathcal{S}_\tau)\cap\mathcal{U}_T$ is isolated from 
	the closed subset $\mathcal{X}^{\mathtt{red}}_T(F_n,\mathrm{SL}(2,\Complex))$
	in $\mathcal{U}_T\cup \mathcal{X}^{\mathtt{red}}_T(F_n,\mathrm{SL}(2,\Complex))$.
	This is where the assumptions 
	$\tau=\lambda+\lambda^{-1}$ and $\Delta_K(\lambda^2)\neq0$ come into play.
	
	To this end, we observe the following identifications,
	according to Lemma \ref{make_braid_presentation} 
	and Notations \ref{notation_braid} and \ref{notation_slice}.
	By pulling back along the group homomorphism $p_\sigma\colon WF_\sigma\to \Pi_\sigma$,
	we can identify $\mathcal{X}(\Pi_\sigma;\mathrm{SL}(2,\Complex))$ 
	with the intersection in $\mathcal{X}({WF}_n,\mathrm{SL}(2,\Complex))$ of the images of
	$\mathcal{X}(F_n,\mathrm{SL}(2,\Complex))$
	under the proper embeddings $r^*_\sigma$ and $r^*_{\mathrm{id}}$.
	In particular, for any $\tau\in T$,
	the intersection in $\mathcal{Q}_T$ of $r^*_\sigma(\mathcal{P}_T)$ with 
	$\mathcal{S}_\tau$ can be identified as 
	the subset of $\mathrm{SL}(2,\Complex)$--characters $\chi\colon\Pi_\sigma\to \Complex$
	determined by $\chi(x_1)=\cdots=\chi(x_n)=\tau$	
	(see Lemma \ref{local_data} and Notation \ref{notation_braid}).
	This is equivalent to saying that $\chi$ is 
	the value of $\chi\colon\pi_1(S^3\setminus K)\to \Complex$ 
	at the meridian of $K$ is equal to $\tau$,
	according to the identification $\pi_1(S^3\setminus K)\cong\Pi_\sigma$
	(see Lemma \ref{make_braid_presentation}).
	For any $\tau\in T$, 
	there is a unique reducible $\mathrm{SL}(2,\Complex)$--character of $\Pi_\sigma$
	with the above property, which we denote as 
	$$\chi_\tau\in\mathcal{X}^{\mathtt{red}}(\Pi_\sigma,\mathrm{SL}(2,\Complex)).$$
	
	Suppose $\tau\in T$.
	If $\tau=\lambda+\lambda^{-1}$ for some complex $\lambda\neq0$,
	and if $\Delta_K(\lambda^2)\neq0$,
	then some open neighborhood of $\chi_\tau$ in $\mathcal{X}(\Pi_\sigma,\mathrm{SL}(2,\Complex))$
	contains no irreducible $\mathrm{SL}(\Complex)$--characters.
	In fact, the condition $\Delta_K(\lambda^2)\neq0$ implies that
	any representation 
	$\rho_\tau\colon\Pi_\sigma\to\mathrm{SL}(2,\Complex)$ with character $\chi_\tau$
	is abelian (and hence, unique up to conjugation),
	by a theorem due to Burde \cite{Burde_deform} and due to de Rham \cite{de_Rham_deform}
	(see also \cite[Corollary 4.3]{HPS_reducible}).
	(Besides, the theorem shows 
	the existence of 
	some reducible, non-abelian $\mathrm{SL}(2,\Complex)$--representation with character $\chi_\tau$,
	if $\Delta_K(\lambda^2)=0$.)
	The assumption $T\subset\Complex\setminus\{\pm2\}$ implies that 
	$\rho_\tau$ is conjugate to a diagonal representation.
	In this case, 
	$\rho_\tau$ is a smooth point (of local complex dimension $3$) 
	in $\mathcal{R}(\Pi_\sigma,\mathrm{SL}(2,\Complex)$,
	and sufficiently nearby representations are all abelian, 
	by a theorem due to Heusener, Porti, and Suar\'es-Peir\'o \cite[Theorem 2.7]{HPS_reducible}.
	This implies that $\chi_\tau$ is 
	a smooth point (of local complex dimension $1$) in $\mathcal{X}(\Pi_\sigma,\mathrm{SL}(2,\Complex))$,
	and sufficiently nearby characters are all reducible.
	
	In particular, 
	under the assumptions $\tau=\lambda+\lambda^{-1}$ and $\Delta_K(\lambda^2)\neq0$,
	the intersection 
	of $r^*_\sigma(\mathcal{X}^{\mathtt{red}}_T(F_n,\mathrm{SL}(2,\Complex))$ 
	and $r^*_{\mathrm{id}}(\mathcal{X}^{\mathtt{red}}_\tau(F_n,\mathrm{SL}(2,\Complex))$ 
	is empty in $\mathcal{X}_T(F_n,\mathrm{SL}(2,\Complex)$
	(see Notation \ref{notation_slice}).
	Equivalently, the subset $((r^*_\sigma)^{-1}(\mathcal{S}_\tau))\cap\mathcal{P}_T$
	in $\mathcal{X}_T(F_n,\mathrm{SL}(2,\Complex))$ 
	is the same as 
	$(r^*_\sigma)^{-1}(r^*_{\mathrm{id}}(\mathcal{X}_\tau(F_n,\mathrm{SL}(2,\Complex))))$,
	which is closed in $\mathcal{X}_T(F_n,\mathrm{SL}(2,\Complex))$.
	Since $f_T\colon\mathcal{P}_T\to\mathcal{Q}_T$ 
	is identical to $r^*_\sigma$ outside some compact subset in $\mathcal{P}_T$
	(see Lemma \ref{local_data}),
	we infer that $f_T^{-1}(\mathcal{S}_\tau)\cap\mathcal{P}_T$ 
	is also closed in $\mathcal{X}_T(F_n,\mathrm{SL}(2,\Complex))$ (see Lemma \ref{local_data}).
	Intersecting with $\mathcal{U}_T\cup \mathcal{X}^{\mathtt{red}}_T(F_n,\mathrm{SL}(2,\Complex))$,
	we see that $f_T^{-1}(\mathcal{S}_\tau)\cap\mathcal{U}_T$
	is closed in $\mathcal{U}_T\cup \mathcal{X}^{\mathtt{red}}_T(F_n,\mathrm{SL}(2,\Complex))$.
	Hence, $f_T^{-1}(\mathcal{S}_\tau)\cap\mathcal{U}_T$ is isolated from the closed subset
	$\mathcal{X}^{\mathtt{red}}_T(F_n,\mathrm{SL}(2,\Complex))$.
	This implies the compactness of $f_T^{-1}(\mathcal{S}_\tau)\cap\mathcal{U}_T$,
	for any $\tau\in D$, as explained at the beginning of the proof.	
\end{proof}

\begin{lemma}\label{half_signature_parity}
	Let $K\subset S^3$ be a knot.
	Denote by $\Delta_K(t)$ the Alexander polynomial of $K$.
	Fix some orientation of $K$ and some braid group element $\sigma\in\mathscr{B}_n$
	for presenting $\pi_1(S^3\setminus K)$, as declared in Lemma \ref{make_braid_presentation}.
	Let $(f_T,\mathcal{U}_T,D)$ be any tuple with respect to some $(T,\tau_0)$, 
	as declared in Lemma \ref{local_data}.
	Retain the notations $\mathcal{P}_T$, $\mathcal{Q}_T$, and $\mathcal{S}_T$ in Lemma \ref{local_data}
	
	For any real $\tau\in D$ satisfying $|\tau|<2$, and 
	for any $\theta\in(0,\pi)$ satisfying $2\cos\theta=\tau$, the following statement holds.
	If $\Delta_K\left(e^{2\cdot\theta\cdot\sqrt{-1}}\right)\neq0$, 
	and if $f^{-1}_T(\mathcal{S}_{\tau})\cap\mathcal{U}_T$ 
	has empty intersection with $\mathcal{X}^{\mathtt{irr}}_T(F_n,\mathrm{SL}(2,\Real))$ in $\mathcal{P}_T$,
	then $$I_2(f_T,\mathcal{S}_\tau;\mathcal{U}_T)\equiv
	\frac{1}{2}\cdot\mathrm{sgn}_K\left(e^{2\cdot\theta\cdot\sqrt{-1}}\right)\bmod 2.$$
\end{lemma}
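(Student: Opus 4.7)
The plan is to exploit the $(\nu,J)$-equivariance to reduce the $\mathrm{SL}(2,\Complex)$-type local mod $2$ intersection number on the left-hand side to an $\mathrm{SU}(2)$-type intersection count, and then to invoke Heusener and Kroll's theorem to evaluate the latter as $\tfrac{1}{2}\mathrm{sgn}_K(e^{2\theta\sqrt{-1}})$ modulo~$2$.

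First, I apply Lemma \ref{nu_J_transverse_approximation} to produce a $(\nu,J)$-equivariant smooth perturbation $g_T\colon\mathcal{P}_T\to\mathcal{Q}_T$ of $f_T$ which is transverse to $\mathcal{S}_\tau$ on a $\nu$-invariant open neighborhood of $f_T^{-1}(\mathcal{S}_\tau)\cap\mathcal{U}_T$. The compactness of this last set needed to apply the lemma is furnished by Lemma \ref{not_zero}, since $\tau=\lambda+\lambda^{-1}$ with $\lambda=e^{\theta\sqrt{-1}}$ and $\Delta_K(\lambda^2)\neq 0$. The hypothesis that $f_T^{-1}(\mathcal{S}_\tau)\cap\mathcal{U}_T$ is disjoint from the closed subset $\mathcal{X}^{\mathtt{irr}}_T(F_n,\mathrm{SL}(2,\Real))$ lets me arrange the perturbation to stay clear of it.

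Next, $I_2(f_T,\mathcal{S}_\tau;\mathcal{U}_T)$ is the parity of the finite, $\nu$-invariant set $g_T^{-1}(\mathcal{S}_\tau)\cap\mathcal{U}_T$. Free $\nu$-orbits have cardinality two and contribute $0$ modulo~$2$, while the $\nu$-fixed points all lie in $\mathcal{P}_T^{\mathrm{SU}(2)}:=\mathcal{X}^{\mathtt{irr}}_T(F_n,\mathrm{SU}(2))$ by construction. By Lemma \ref{nu_J_transverse_property}, the restriction $g_T|_{\mathcal{P}_T^{\mathrm{SU}(2)}}$ is transverse to $\mathcal{S}_\tau^{\mathrm{SU}(2)}:=\mathcal{S}_\tau\cap\mathcal{X}^{\mathtt{irr}}_T(WF_n,\mathrm{SU}(2))$, and since Lemma \ref{local_data} ensures $\mathcal{U}_T\supset\mathcal{P}_T^{\mathrm{SU}(2)}$, this $\mathrm{SU}(2)$-type count is actually global on $\mathcal{P}_T^{\mathrm{SU}(2)}$. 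Thus $I_2(f_T,\mathcal{S}_\tau;\mathcal{U}_T)$ equals the mod $2$ transverse intersection count of $g_T|_{\mathcal{P}_T^{\mathrm{SU}(2)}}$ with $\mathcal{S}_\tau^{\mathrm{SU}(2)}$.

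Finally, I identify this $\mathrm{SU}(2)$-type count with $\tfrac{1}{2}\mathrm{sgn}_K(e^{2\theta\sqrt{-1}})\bmod 2$. I homotope $g_T|_{\mathcal{P}_T^{\mathrm{SU}(2)}}$ back to $r^*_\sigma|_{\mathcal{P}_T^{\mathrm{SU}(2)}}$ by restricting the $(\nu,J)$-equivariant homotopy to the $\mathrm{SU}(2)$ fixed locus. Under the trace-preserving map $r^*_\sigma$, the intersection with $\mathcal{S}_\tau^{\mathrm{SU}(2)}$ is automatically confined to the $\tau$-slice $\mathcal{P}_\tau^{\mathrm{SU}(2)}$, where Heusener and Kroll's theorem identifies the signed intersection count with $h^\theta(K)=\tfrac{1}{2}\mathrm{sgn}_K(e^{2\theta\sqrt{-1}})$, assuming $\Delta_K(e^{2\theta\sqrt{-1}})\neq 0$. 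Taking parities gives the asserted congruence. The main obstacle is a careful homotopy-invariance argument matching our $T$-parametric mod $2$ count with Heusener and Kroll's $\tau$-fixed count: one must ensure that during any $(\nu,J)$-equivariant homotopy through $\mathrm{SU}(2)$ maps, the intersection with the $\tau$-fiber submanifold $\mathcal{S}_\tau^{\mathrm{SU}(2)}$ remains compact. This is secured by the global compactness of the Heusener--Kroll intersection when $\Delta_K(e^{2\theta\sqrt{-1}})\neq 0$, combined with a sufficient smallness condition on the perturbation so that intersections do not escape a fixed compact neighborhood of the $\tau$-fiber.
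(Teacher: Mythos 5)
Your proposal is correct and follows essentially the same route as the paper's proof: a $(\nu,J)$--equivariant transverse perturbation supported away from the $\mathrm{SL}(2,\Real)$ locus, a mod $2$ splitting of the count into $\mathrm{SU}(2)$, $\mathrm{SL}(2,\Real)$, and conjugate-paired non-real contributions with the $\mathrm{SL}(2,\Real)$ part killed by hypothesis, and evaluation of the $\mathrm{SU}(2)$ part via Heusener--Kroll. The only step where the paper is more explicit is the reduction from the $T$--parametric $\mathrm{SU}(2)$ count to the fixed-$\tau$ count, which it carries out by extending a transverse perturbation of the fiber map $r'_\tau$ over the bundle $\mathcal{X}^{\mathtt{irr}}_{T'}(F_n,\mathrm{SU}(2))\to T'$, rather than by your smallness-plus-compactness argument, though both rest on the same compactness supplied by $\Delta_K\left(e^{2\theta\sqrt{-1}}\right)\neq0$.
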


\begin{proof}
	The condition $\Delta_K\left(e^{2\cdot\theta\cdot\sqrt{-1}}\right)\neq0$ 
	implies that $f^{-1}_T(\mathcal{S}_{\tau})\cap\mathcal{U}_T$ is compact (Lemma \ref{not_zero}).
	In particular, 
	the local modulo $2$ intersection number $I_2(f_T,\mathcal{S}_\tau;\mathcal{U}_T)$ is well-defined.
	
	To compute $I_2(f_T,\mathcal{S}_\tau;\mathcal{U}_T)$,
	take some $(\nu,J)$--equivariantly smooth homotopic small perturbation of $f_T$,	
	supported in a compact neighborhood of $f^{-1}_T(\mathcal{S}_{\tau})\cap\mathcal{U}_T$ 
	within $\mathcal{U}_T$. 
	Denote the resulting $(\nu,J)$--equivariant map as $g_T\colon\mathcal{P}_T\to\mathcal{Q}_T$,
	then we can make sure that $g_T$ is transverse to $\mathcal{S}_\tau$ on $\mathcal{U}_T$ 
	(Lemma \ref{nu_J_transverse_approximation}).
	This also makes sure that $g^\nu_T\colon\mathcal{P}^\nu_T\to\mathcal{Q}^\nu_T$
	is transverse to $\mathcal{S}^\nu_\tau$ on $\mathcal{U}^\nu_T$ (Lemma \ref{nu_J_transverse_property}).
	Note that the fixed locus $\mathcal{Q}^\nu_T$ is the disjoint union of closed smooth submanifolds
	$\mathcal{X}^{\mathtt{irr}}(WF_n,\mathrm{SU}(2))$ and $\mathcal{X}^{\mathtt{irr}}(WF_n,\mathrm{SL}(2,\Real))$
	(Subsection \ref{Subsec_analytic} and Example \ref{nu_J_example}),
	and the $(\nu,J)$--equivariant smooth map $r^*_\sigma\colon\mathcal{P}_T\to\mathcal{Q}_T$
	must preserve the fixed loci, 
	mapping irreducible $\mathrm{SU}(2)$--characters to irreducible $\mathrm{SU}(2)$--characters,
	and irreducible $\mathrm{SL}(2,\Real)$--characters to irreducible $\mathrm{SL}(2,\Real)$--characters.
	The same property holds for $f_T$ and for $g_T$,
	as they are both $(\nu,J)$--equivariantly smoothly homotopic to $r^*_\sigma$.
	It follows that $g^{-1}_T(\mathcal{S}_\tau)\cap\mathcal{U}_T$
	is a $0$--dimensional, compact, smooth $(\nu,J)$--manifold,
	which consists of 
	finitely many irreducible $\mathrm{SU}(2)$--characters, and $\mathrm{SL}(2,\Real)$--characters,
	and other non-real, irreducible $\mathrm{SL}(2,\Complex)$--characters.
	The non-reals are grouped in pairs under the free $\nu$--involution.
	This yields the following count of the local modulo $2$ intersection number in $\Integral/2\Integral$.
	\begin{eqnarray*}
	I_2(f_T,\mathcal{S}_\tau;\mathcal{U}_\tau)&=&I_2(g_T,\mathcal{S}_\tau;\mathcal{U}_T)\\
	&=&I_2\left(g^\nu_T,\mathcal{S}^\nu_\tau;\mathcal{U}^\nu_T\right)\\
	&=&
	I_2\left(g^\nu_T,\mathcal{S}^\nu_\tau;\mathcal{U}^\nu_T\right)_{\mathrm{SU}(2)}+
	I_2\left(g^\nu_T,\mathcal{S}^\nu_\tau;\mathcal{U}^\nu_T\right)_{\mathrm{SL}(2,\Real)}
	\end{eqnarray*}
	Here, the terms in the last line refer to the separate contributions
	from the $\mathrm{SU}(2)$ type and from the $\mathrm{SL}(2,\Real)$ type,
	accordingly.
	
	The condition that $f^{-1}_T(\mathcal{S}_{\tau})\cap\mathcal{U}_T$ 
	has empty intersection with $\mathcal{X}^{\mathtt{irr}}_T(F_n,\mathrm{SL}(2,\Real))$ in $\mathcal{P}_T$
	implies the similar property for $g^{-1}_T(\mathcal{S}_{\tau})\cap\mathcal{U}_T$,
	as long as we make sure the pertubation to be sufficiently small.
	To be precise, this can be guaranteed if we require the image of the compact support of the perturbation
	to stay in the open subset in $\mathcal{Q}_T$
	complementary to $\mathcal{X}^{\mathtt{irr}}(WF_n,\mathrm{SL}(2,\Real))$.
	In this case,
	it follows that there are no contribution from the $\mathrm{SL}(2,\Real)$ type,
	namely,
	$$I_2\left(g^\nu_T,\mathcal{S}^\nu_\tau;\mathcal{U}^\nu_T\right)_{\mathrm{SL}(2,\Real)}=0.$$
	So, it remains to prove the following identity. 
	$$I_2\left(g^\nu_T,\mathcal{S}^\nu_\tau;\mathcal{U}^\nu_T\right)_{\mathrm{SU}(2)}
	\equiv\frac{1}{2}\cdot\mathrm{sgn}_K\left(e^{2\cdot\theta\cdot\sqrt{-1}}\right)\bmod 2.$$
	
	In fact, the last identity is direct application of 
	a theorem due to Heusener and Kroll \cite[Theorem 1.2]{Heusener--Kroll_abelian},
	which characterizes the value of $(1/2)\cdot\mathrm{sgn}_K$,
	at any $e^{2\cdot\theta\cdot\sqrt{-1}}$ other that zeros of $\Delta_K$, 
	as the intersection number 
	between trace $\tau$--slices of irreducible $\mathrm{SU}(2)$--characters.
	Their theorem generalizes a former characterization of the knot signature	in terms of trace $0$--slices,
	due to Lin \cite{Lin_invariant}.
	Below, we recall their theorem, and explain the adaptation.
	For accuracy, we expose with intersection numbers, as they do,
	although the modulo $2$ residues are what we only need.
		
	For simplicity, we rewrite $T'=T^\nu=T\cap\Real$,
	and	$\mathcal{P}'_{T'}=\mathcal{X}^{\mathtt{irr}}_{T'}(F_n,\mathrm{SU}(2))$,
	and $\mathcal{Q}'_{T'}=\mathcal{X}^{\mathtt{irr}}_{T'}(WF_n,\mathrm{SU}(2))$.
	Rewrite $g'_{T'}\colon\mathcal{P'}_{T'}\to\mathcal{Q}'_{T'}$ for the restriction of $g^\nu_T$ to $\mathcal{P}'_{T'}$,
	and $\mathcal{S}'_\tau$ for the intersection of $\mathcal{S}_\tau$ with $\mathcal{Q}'_{T'}$,
	and $\mathcal{U}'_{T'}$ for the intersection of $\mathcal{U}_T$ with $\mathcal{P}'_{T'}$.
	Then, $g'_{T'}$ is a smooth map between smooth manifolds $\mathcal{P}'_{T'}$ and $\mathcal{Q}'_{T'}$,
	and $\mathcal{S}'_\tau$ is a closed smooth manifold of $\mathcal{Q}'_{T'}$.
	The open subset $\mathcal{U}'$ is an open subset in $\mathcal{P}'_{T'}$.
	The manifolds are all canonically oriented (see Remark \ref{nu_J_orientation_remark}), 
	and $g'_{T'}$ is transverse to $\mathcal{S}'_\tau$,
	intersecting at a finite number of points.
	Moreover, as we have assumed $\mathcal{P}'_{T'}\subset\mathcal{U}_T$ (see Lemma \ref{local_data}), 
	implying $\mathcal{U}'_{T'}=\mathcal{P}'_{T'}$,
	the local intersection number of $g'_{T'}$ with $\mathcal{S}'_{T'}$ on $\mathcal{U}'_{T'}$
	is the same as the (global) intersection number.
	Namely, we observe
	$$I\left(g^\nu_T,\mathcal{S}^\nu_\tau;\mathcal{U}^\nu_T\right)_{\mathrm{SU}(2)}
	=I\left(g'_{T'},\mathcal{S}'_{T'};\mathcal{U}'_{T'}\right)
	=I\left(g'_{T'},\mathcal{S}'_{\tau'}\right).$$
	
	Rewrite $r^*_\sigma\colon\mathcal{P}'_{T'}\to \mathcal{Q}'_{T'}$ as $r'_{T'}$,
	for clarity at the momoent.
	Our $(\nu,J)$--equivariant smooth homotopy assumption regarding $g_T$ forces
	$g'_{T'}$ to be smoothly homotopic to $r'_{T'}$.
	Note that the parametrization maps	$\mathcal{P}'_{T'}\to T'$ and $\mathcal{Q}'_{T'}\to T'$
	are bundle projections (see Example \ref{nu_J_example}), and $r'_{T'}$ is a bundle map over $T'$.
	Denote by $r'_\tau\colon\mathcal{P}'_\tau\to\mathcal{Q}'_\tau$
	the fiber map of $r'_{T'}$ at $\tau$,
	where the fibers are precisely the trace $\tau$--slices
	$\mathcal{P}'_\tau=\mathcal{X}^{\mathtt{irr}}_\tau(F_n,\mathrm{SU}(2))$
	and $\mathcal{Q}'_\tau=\mathcal{X}^{\mathtt{irr}}_\tau(WF_n,\mathrm{SU}(2))$.
	Any small perturbation of $r'_\tau$ transverse to $\mathcal{S}'_\tau$
	extends to be some small perturbation of $r'_{T'}$ transverse to $\mathcal{S}'_\tau$,
	(applying a smooth extension followed by a transverse approximation).
	Therefore, we observe
	$$
	I\left(g'_{T'},\mathcal{S}'_\tau\right)
	=I\left(r'_{T'},\mathcal{S}'_\tau\right)
	=I\left(r'_\tau,\mathcal{S}'_\tau\right).$$
		
	The characterization due to Heusener and Kroll \cite[Theorem 1.2]{Heusener--Kroll_abelian} 
	asserts precisely
	$$I\left(r'_\tau,\mathcal{S}'_\tau\right)=\frac{1}{2}\cdot\mathrm{sgn}_K\left(e^{2\cdot\theta\cdot\sqrt{-1}}\right),$$
	assuming $\Delta_K\left(e^{2\cdot\theta\sqrt{-1}}\right)\neq0$.
	
	Joining the above identities, we obtain
	$$I\left(g^\nu_T,\mathcal{S}^\nu_\tau;\mathcal{U}^\nu_T\right)_{\mathrm{SU}(2)}
	%=I\left(g'_{T'},\mathcal{S}_\tau\right)
	=\frac{1}{2}\cdot\mathrm{sgn}_K\left(e^{2\cdot\theta\cdot\sqrt{-1}}\right).$$
	The modulo $2$ reduction of the last identity
	is what we need for completing the proof of Lemma \ref{half_signature_parity}, as explained.
	
	Finally, 
	we point out that asserted identity in Lemma \ref{half_signature_parity} only holds modulo $2$.
	This is	because 
	we have discarded an even number of non-real irreducible $\mathrm{SL}(2,\Complex)$--characters,
	whose signed count depends not only on $K$ and $\tau$, but also on $f_T$ and $\mathcal{U}_T$.
\end{proof}

With the above preparation, we prove Theorem \ref{deform_character} as follows.

Let $K\subset S^3$ be a knot.
Suppose that $e^{2\cdot\theta_0\sqrt{-1}}$ is a complex zero of odd order for
the Alexander polynomial $\Delta_K(t)$,
for some $\theta_0\in(0,\pi)$.
Denote by	$\chi_0\colon\pi_1(S^3\setminus K)\to \Real$ 
the unique reducible $\mathrm{SL}(2,\Real)$--character 
with value $2\cos\theta_0$ at the meridian of $K$.

It suffices to show that $\chi_0$ lies in the closure of
$\mathcal{X}^{\mathtt{irr}}(\pi_1(S^3\setminus K),\mathrm{SL}(2,\Real))$
in $\mathcal{X}(\pi_1(S^3\setminus K),\mathrm{SL}(2,\Real))$.
This is because $\mathcal{X}(\pi_1(S^3\setminus K),\mathrm{SL}(2,\Real))$
is homeomorphic to a locally finite simplicial complex,
such that $\mathcal{X}^{\mathtt{red}}(\pi_1(S^3\setminus K),\mathrm{SL}(2,\Real))$
is a subcomplex (see Subsection \ref{Subsec_analytic}).
If $\chi_0$ lies in the closure of $\mathcal{X}^{\mathtt{irr}}(\pi_1(S^3\setminus K),\mathrm{SL}(2,\Real))$,
then $\chi_0$
lies in the boundary of some simplex with interior in $\mathcal{X}^{\mathtt{irr}}(\pi_1(S^3\setminus K),\mathrm{SL}(2,\Real))$,
giving rise to a continuous path of characters $\chi_s\colon \pi_1(S^3\setminus K)\to \mathrm{SL}(2,\Real)$
extending $\chi_0$ with $\chi_s$ irreducible for all $s\in(0,1]$, as asserted.

To argue by contradiction,
suppose that some neighborhood of $\chi_0$ contains 
no irreducible $\mathrm{SL}(2,\Real)$--characters of $\pi_1(S^3\setminus K)$.
Fix some orientation of $K$ and some braid group element $\sigma\in\mathscr{B}_n$,
for presenting $\pi_1(S^3\setminus K)$, as declared in Lemma \ref{make_braid_presentation}.
Adopt Notations \ref{notation_braid} and \ref{notation_slice}.
We identify $\pi_1(S^3\setminus K)$ with $\Pi_\sigma$.
By pulling back along the group homomorphism $p_\sigma\colon WF_\sigma\to \Pi_\sigma$,
we identify $\mathcal{X}(\Pi_\sigma;\mathrm{SL}(2,\Complex))$ 
with the intersection in $\mathcal{X}({WF}_n,\mathrm{SL}(2,\Complex))$ of the images of
$\mathcal{X}(F_n,\mathrm{SL}(2,\Complex))$
under the proper embeddings $r^*_\sigma$ and $r^*_{\mathrm{id}}$.
		
We construct a tuple $(T,\tau_0,\mathcal{W}_T,\mathcal{V}_T)$ as follows.
Set
	$$\tau_0=2\cos\theta_0,$$ 
Obtain some small round disk neighborhood centered at $\tau_0$,
denoted as
	$$T\subset\Complex\setminus\{\pm2\},$$ 
such that for all $\tau\neq\tau_0$ in $T$, 
and for all $\lambda\in\Complex\setminus\{0\}$ satisfying $\lambda+\lambda^{-1}=\tau$,
$\lambda^2$ is not a zero of $\Delta_K(t)$.
By our assumption to the contrary,
we can take the radius of $T$ to be sufficiently small,
such that $r^*_\sigma(\mathcal{X}^{\mathtt{red}}_T(F_n,\mathrm{SL}(2,\Complex)))$
has some open neighborhood in $\mathcal{X}(WF_n,\mathrm{SL}(2,\Complex))$
of empty intersection with
$\mathcal{V}=r^*_{\mathrm{id}}(\mathcal{X}^{\mathtt{irr}}(F_n,\mathrm{SL}(2,\Real))$.
Denote 
	$\mathcal{P}_T=\mathcal{X}^{\mathtt{irr}}_T\left(F_n,\mathrm{SL}(2,\Complex)\right)$,
and
	$\mathcal{Q}_T=\mathcal{X}^{\mathtt{irr}}_T\left({WF}_n,\mathrm{SL}(2,\Complex)\right)$.
For any $\tau\in T$, denote by $\mathcal{S}_\tau$ the image of 
	$\mathcal{X}^{\mathtt{irr}}_\tau\left(F_n,\mathrm{SL}(2,\Complex)\right)$
under $r^*_{\mathrm{id}}\colon\mathcal{P}_T\to \mathcal{Q}_T$.
View $T$, $\mathcal{P}_T$, and $\mathcal{Q}_T$ as 
	$(\nu,J)$--smooth manifolds,
and $\mathcal{S}_\tau$ as a closed smooth submanifold of $\mathcal{Q}_T$,
according to Example \ref{nu_J_example}.
Set
	$$\mathcal{V}_T=\mathcal{Q}_T	\cap \mathcal{V},$$
and
	$$\mathcal{W}_T=\mathcal{X}_T(F_n,\mathrm{SL}(2,\Complex))\setminus \left((r^*_\sigma)^{-1}(\mathcal{V}_T)\right).$$
By construction,
$\mathcal{W}_T\subset\mathcal{X}_T(F_n,\mathrm{SL}(2,\Complex))$	
is a complex-conjugation invariant, open subset,
containing
	$\mathcal{X}^{\mathtt{red}}_T(F_n,\mathrm{SL}(2,\Complex)\cup\mathcal{X}_T(F_n,\mathrm{SU}(2))$;
	$\mathcal{V}_T\subset\mathcal{Q}_T$ is a $\nu$--invariant, closed subset,
having empty intersection with $r^*_\sigma(\mathcal{W}_T)$.
	
Applying Lemma \ref{local_data}, we obtain some tuple
	$$(f_T,\mathcal{U}_T,D),$$
with respect to $(\sigma,T,\tau_0)$.
We require $(f_T,\mathcal{U}_T,D)$ to satisfy the additional properties therein,
with respect to $\mathcal{W}_T$ and $\mathcal{V}_T$.
Possibly after discarding the components not containing $\tau_0$,
we assume that $D$ is connected.
	
By construction, any real $\tau$ in $D$ satisfies $|\tau|<2$,
so $\tau=2\cos\theta$ holds for some $\theta\in(0,\pi)$.
Moreover, $e^{2\cdot\theta\cdot\sqrt{-1}}$ is not a zero of $\Delta_K(t)$, unless $\tau=\tau_0$.
Since $f_T$ is $(\nu,J)$--equivariantly homotopic to $r^*_\sigma$ (see Lemma \ref{local_data}),
we observe $f_T(\mathcal{X}^{\mathtt{irr}}_T(F_n,\mathrm{SL}(2,\Real)))\subset \mathcal{V}_T$;
meanwhile, 
our construction guarantees 
$f_T(\mathcal{W}_T)\cap\mathcal{V}_T=\emptyset$ and $\mathcal{U}_T\subset\mathcal{W}_T$ (see Lemma \ref{local_data}).
In particular, 
$f^{-1}_T(\mathcal{S}_{\tau})\cap\mathcal{U}_T$ 
must have empty intersection with $\mathcal{X}^{\mathtt{irr}}_T(F_n,\mathrm{SL}(2,\Real))$ in $\mathcal{P}_T$.
	
Consider some real $\tau_-=2\cos\theta_-$ and $\tau_+=2\cos\theta_+$ 
in $D\setminus\{\tau_0\}$,
separated by $\tau_0$ along the real axis, namely, 
$$\tau_-<\tau_0<\tau_+.$$

The local modulo $2$ intersection number
$I_2(f_T,\mathcal{S}_\tau;\mathcal{U}_T)$
is well-defined for all $\tau\in D\setminus\{\tau_0\}$.
It is locally constant as a function in $\tau$,
by standard differential topology \cite[Chapter 5, Theorem 2.1]{Hirsch_book}.
Since $\tau_-$ and $\tau_+$ are connected by some path in $D\setminus\{\tau_0\}$,
we obtain 
$$I_2(f_T,\mathcal{S}_{\tau_-};\mathcal{U}_T)= I_2(f_T,\mathcal{S}_{\tau_+};\mathcal{U}_T),$$
as an identity in $\Integral/2\Integral$.
	
On the other hand,
the signature function jumps by $2$ modulo $4$ when 
the variable $e^{2\cdot\theta\cdot\sqrt{-1}}$ on the complex unit circle 
passes through a zero of odd order for the Alexander polynomial 
\cite[Theorem 2.4]{Gilmer--Livingston_jump}.
This implies
	$$1+\frac12\cdot\mathrm{sgn}_K\left(e^{2\cdot\theta_-\cdot\sqrt{-1}}\right)
	\equiv\frac12\cdot\mathrm{sgn}_K\left(e^{2\cdot\theta_+\cdot\sqrt{-1}}\right)\bmod 2.$$
	
Nothing about the $(\nu,J)$--equivariance of $f_T$ has been used so far.
At this point, we apply Lemma \ref{half_signature_parity}, inferring
$$I_2(f_T,\mathcal{S}_{\tau_\pm};\mathcal{U}_T)\equiv
\frac{1}{2}\cdot\mathrm{sgn}_K\left(e^{2\cdot\theta_{\pm}\cdot\sqrt{-1}}\right)\bmod 2.$$
The last identity contradicts the previous two identities, as desired.

This completes the proof of Theorem \ref{deform_character}.

\section{Inferring an odd-order unitary zero}\label{Sec-order}
In this section, we prove Theorem \ref{main_zero}.

In fact, we prove a more general criterion, as Theorem \ref{criterion_odd_order} below.
For any knot $K\subset S^3$, the Alexander polynomial $\Delta_K\in\Integral[t,t^{-1}]$
satisfies $\Delta_K(t)=\Delta_K(t^{-1})$ and $\Delta_K(1)=1$.
Hence, $\Delta_K$ satisfies the assumptions in Theorem \ref{criterion_odd_order},
and Theorem \ref{main_zero} follows directly from Theorem \ref{criterion_odd_order}.

\begin{theorem}\label{criterion_odd_order}
	Suppose that $P\in\Integral[t,t^{-1}]$ is an integral Laurent polynomial
	of palindrome form $P(t)=a_0+a_1\cdot(t+t^{-1})+\cdots+a_d\cdot(t^d+t^{-d})$,
	such that $a_0$ is odd.
	Then, $P(t)$ has a zero of odd order on the complex unit circle,
	if the following inequality holds for some $j\in\{1,\ldots,d\}$.
	$$|a_j|\geq |a_0|\cdot\cos\left(\frac{\pi}{\lfloor d/j\rfloor +2}\right)$$
\end{theorem}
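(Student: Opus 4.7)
The plan is to prove the contrapositive by combining the Konvalina--Matache existence criterion with an integrality argument. I would first set $p(\theta) := P(e^{i\theta}) = a_0 + 2\sum_{k=1}^d a_k\cos(k\theta)$, a real-valued trigonometric polynomial in $\theta$. Zeros of $P$ on the complex unit circle correspond to zeros of $p$ on $\Real/2\pi\Integral$, and an odd-order zero of $P$ at $e^{i\theta_0}$ corresponds to a sign change of $p$ at $\theta_0$. Note that $p(0)=P(1)$ and $p(\pi)=P(-1)$ are both odd integers by the oddness of $a_0$, so $\pm 1$ cannot be zeros of $P$.

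Next, I would argue by contradiction: assume $P$ has no odd-order zero on the complex unit circle. Then $p$ has constant sign on $\Real/2\pi\Integral$, and after replacing $P$ by $-P$ if needed, I may take $p\geq 0$. Since $a_0=\frac{1}{2\pi}\int_0^{2\pi}p(\theta)\,d\theta\geq 0$ and $a_0$ is odd, $a_0\geq 1$. I would then apply the contrapositive of Konvalina--Matache (Theorem \ref{criterion_KM}) to the shifted Laurent polynomial $P_\epsilon(t):=P(t)+\epsilon$ for small $\epsilon>0$: the associated trigonometric polynomial $p_\epsilon=p+\epsilon$ is strictly positive, so $P_\epsilon$ has no zero on the complex unit circle, which forces
\[
|a_{j'}| < (a_0+\epsilon)\cos\!\left(\frac{\pi}{\lfloor d/j'\rfloor+2}\right)
\]
for every $j'\in\{1,\ldots,d\}$. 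Specializing to the given $j$ and letting $\epsilon\to 0^+$ yields $|a_j|\leq a_0\cos(\pi/n)$ with $n:=\lfloor d/j\rfloor+2\geq 3$. Combined with the hypothesis $|a_j|\geq a_0\cos(\pi/n)$, equality holds: $|a_j|=a_0\cos(\pi/n)$.

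Finally, I would derive a contradiction from integrality together with Niven's theorem on rational values of the cosine. If $n=3$, then $\cos(\pi/n)=1/2$ and $|a_j|=a_0/2$ would force $a_0$ to be even, contrary to hypothesis. If $n\geq 4$, then $\cos(\pi/n)$ is irrational (the only rational values of $\cos(\pi/n)$ for positive integer $n$ occur at $n\in\{1,2,3\}$), so $a_0\cos(\pi/n)$ is irrational, contradicting $|a_j|\in\Integral$. Hence $P$ must have an odd-order zero on the complex unit circle.

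The main conceptual step will be extracting a non-strict Fej\'er-type bound from Konvalina--Matache via the small-$\epsilon$ perturbation of the constant term; once that is in place, the hypothesis forces equality with a cosine value that the integer-plus-odd constraint on $a_0$ cannot accommodate, and the routine rationality check closes the argument.
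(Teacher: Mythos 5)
Your argument is correct, and it replaces the paper's key perturbation step with a genuinely different one. The paper first proves Lemma \ref{criterion_KM_strict}: for a real palindromic Laurent polynomial, the \emph{strict} inequality already forces an odd-order unitary zero; this is done by factoring out the unit-circle zeros (assumed all of even order) and deforming each palindromic factor multiplicatively, e.g. $(t+b+t^{-1})^{2m}\mapsto(t+b+ht^{-1})^{m}(ht+b+t^{-1})^{m}$ with $h>1$, so that the zeros leave the circle while the coefficients vary continuously, contradicting Theorem \ref{criterion_KM} for $h$ near $1$; Theorem \ref{criterion_odd_order} then follows because integrality of the coefficients and oddness of $a_0$ (via $\cos(\pi/3)=1/2$ and the irrationality of $\cos(\pi/n)$ for $n\geq4$) upgrade the non-strict hypothesis to a strict one. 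You instead assume there is no odd-order zero, note that $p(\theta)=P(e^{i\theta})$ then keeps a constant sign (its isolated zeros all being of even order, and $p\not\equiv0$ since $P(1)$ is odd), normalize $p\geq0$ so that $a_0\geq1$, add $\epsilon>0$ to the constant term to clear all circle zeros, apply the contrapositive of Theorem \ref{criterion_KM} to $P+\epsilon$, and let $\epsilon\to0^{+}$ to obtain $|a_j|\leq a_0\cos\left(\pi/(\lfloor d/j\rfloor+2)\right)$; the forced equality is then excluded by exactly the same number-theoretic endgame the paper uses. Your additive $\epsilon$-shift is more elementary, avoiding the factorization and deformation entirely, and it would in fact also yield a shorter proof of Lemma \ref{criterion_KM_strict} itself, since under a strict inequality the limiting bound is already a contradiction; what the paper's route buys is that lemma as a free-standing statement about arbitrary real palindromic polynomials, recorded independently of the integrality considerations.
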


The rest of this section is devoted to the proof of Theorem \ref{criterion_odd_order}.
We make use the following criterion for the existence of a zero on the complex unit circle,
due to Konvalina and Matache \cite[Theorem 1]{Konvalina--Matache}.

\begin{theorem}[Konvalina--Matache]\label{criterion_KM}
	Suppose that $P\in\Real[t,t^{-1}]$ is a real Laurent polynomial
	of palindrome form $P(t)=a_0+a_1\cdot(t+t^{-1})+\cdots+a_d\cdot(t^d+t^{-d})$.
	Then, $P(t)$ has a zero on the complex unit circle,
	if the following inequality holds for some $j\in\{1,\ldots,d\}$.
	$$|a_j|\geq |a_0|\cdot\cos\left(\frac{\pi}{\lfloor d/j\rfloor +2}\right)$$
\end{theorem}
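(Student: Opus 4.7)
The plan is to link odd-order unit-circle zeros of $P$ to sign changes of the real-valued function $f(\theta) = P(e^{i\theta}) = a_0 + 2\sum_{k=1}^d a_k \cos(k\theta)$, and to force such a sign change by applying the Konvalina--Matache theorem (Theorem \ref{criterion_KM}) to a carefully perturbed polynomial. Expanding $t = e^{i(\theta_0 + h)}$ near a zero $t_0 = e^{i\theta_0}$ of $P$ of order $m$, a direct computation yields $f(\theta_0 + h) = c \cdot h^m + O(h^{m+1})$ with $c \in \Real \setminus \{0\}$ (the reality of $c$ follows from the palindrome condition making $f$ real-valued). Hence $f$ changes sign at $\theta_0$ if and only if the zero of $P$ at $e^{i\theta_0}$ has odd order; moreover, the oddness of $a_0$ forces $P(\pm 1)$ to be odd, so $\pm 1$ are not zeros of $P$ and all sign-change analysis happens at points with $\theta_0 \in (0,\pi) \cup (\pi, 2\pi)$.

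The first main step is to upgrade the hypothesis from $|a_j| \geq |a_0| \cos(\pi/n_j)$ to strict inequality for some $j$, where I set $n_j = \lfloor d/j \rfloor + 2 \geq 3$. Suppose equality held for some admissible $j$. For $n_j \geq 4$, Niven's theorem on rational values of $\cos(q\pi)$ shows that $\cos(\pi/n_j)$ is irrational, whereas $|a_j|/|a_0|$ is rational because $a_0$ is a nonzero integer, a contradiction. For $n_j = 3$, equality reads $|a_j| = |a_0|/2$, which is incompatible with $a_0$ odd and $a_j \in \Integral$. Hence no $j$ can saturate the inequality, and any $j$ witnessing the hypothesis yields strict inequality $|a_j| > |a_0|\cos(\pi/n_j)$.

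The second step is the perturbation. Suppose for contradiction that $P$ has no odd-order zero on the unit circle, so that $f$ has constant sign. Replacing $P$ by $-P$ if necessary (which preserves oddness of $a_0$ and the strict inequality, since it only flips signs of all coefficients), I may assume $f \geq 0$. Then $a_0 = \tfrac{1}{2\pi}\int_0^{2\pi} f\,\mathrm{d}\theta \geq 0$, and since $a_0$ is odd, $a_0 \geq 1 > 0$. Fix $j$ witnessing the strict inequality $|a_j| > a_0 \cos(\pi/n_j)$, and set $P_\epsilon(t) = P(t) + \epsilon$ for small $\epsilon > 0$. The coefficients of $P_\epsilon$ are $(a_0 + \epsilon, a_1, \ldots, a_d)$, all still real and palindrome; for $\epsilon$ sufficiently small, the strict inequality $|a_j| > (a_0 + \epsilon)\cos(\pi/n_j)$ persists, so Theorem \ref{criterion_KM} guarantees that $P_\epsilon$ has a zero on the unit circle. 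But $P_\epsilon(e^{i\theta}) = f(\theta) + \epsilon \geq \epsilon > 0$, a contradiction.

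The main obstacle is the strict-inequality reduction via Niven's theorem: this is the only place where the integrality of $(a_j)$ and the oddness of $a_0$ genuinely enter, and it is precisely what upgrades the Konvalina--Matache conclusion (existence of a zero) to the refined conclusion (existence of an odd-order zero). Once strict inequality is in hand, the perturbation $P \mapsto P + \epsilon$ is clean: it preserves the coefficient inequality while strictly increasing the value of $f$ on the entire unit circle, and the two effects are incompatible unless $f$ already changed sign.
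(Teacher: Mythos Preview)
You have misidentified the target statement. Theorem~\ref{criterion_KM} is the Konvalina--Matache result itself: it assumes only \emph{real} coefficients, makes no parity hypothesis on $a_0$, and concludes merely that $P$ has \emph{some} zero on the unit circle, with no claim about the order. The paper does not prove this theorem; it is quoted from \cite{Konvalina--Matache} and used as a black box. Your argument, by contrast, assumes $a_0$ is an odd integer, concludes the existence of an \emph{odd-order} zero, and explicitly invokes Theorem~\ref{criterion_KM} in the perturbation step. As a proof of Theorem~\ref{criterion_KM} it is therefore circular. What you have actually written is a proof of Theorem~\ref{criterion_odd_order}.

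Viewed as a proof of Theorem~\ref{criterion_odd_order}, your argument is correct and takes a genuinely different route from the paper. The paper first establishes Lemma~\ref{criterion_KM_strict} (strict inequality $\Rightarrow$ odd-order zero) for arbitrary real coefficients, by factoring $P$ over its unit-circle zeros and perturbing each quadratic factor off the circle; only afterwards does it invoke integrality and the parity of $a_0$ (together with the irrationality of $\cos(\pi/n)$ for $n\geq 4$) to force strictness. You reverse the order: you secure strictness first via the same irrationality argument, and then use a much simpler perturbation $P\mapsto P+\epsilon$, which preserves the strict coefficient inequality while making $P_\epsilon(e^{i\theta})=f(\theta)+\epsilon$ strictly positive, contradicting Theorem~\ref{criterion_KM}. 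Your perturbation avoids the factorization entirely and is cleaner; the trade-off is that it needs $a_0>0$ (obtained from $\int f\geq 0$ and the parity hypothesis), so it does not yield the intermediate Lemma~\ref{criterion_KM_strict} over general real coefficients, whereas the paper's approach does.
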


We strengthen the conclusion of Theorem \ref{criterion_KM}
when the inequality therein is strict.

\begin{lemma}\label{criterion_KM_strict}
	Suppose that $P\in\Real[t,t^{-1}]$ is a real Laurent polynomial
	of palindrome form $P(t)=a_0+a_1\cdot(t+t^{-1})+\cdots+a_d\cdot(t^d+t^{-d})$.
	Then, $P(t)$ has a zero of odd order on the complex unit circle,
	if the following strict inequality holds for some $j\in\{1,\ldots,d\}$.
	$$|a_j|> |a_0|\cdot\cos\left(\frac{\pi}{\lfloor d/j\rfloor +2}\right)$$
\end{lemma}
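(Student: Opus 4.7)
The plan is to argue by contradiction, using Theorem \ref{criterion_KM} as a black box. The strict inequality supplies enough slack to add a small positive constant to the constant coefficient of $P$ without violating the Konvalina--Matache inequality, and under the contradiction hypothesis this perturbation will eliminate all zeros of $P$ on the unit circle, producing the desired contradiction. The only point that needs care is the identification between the orders of vanishing of $P$ at $e^{\theta_0\cdot\sqrt{-1}}$ and of the trigonometric function $p(\theta) := P(e^{\theta\cdot\sqrt{-1}})$ at $\theta_0 \in \Real$, which is immediate from the fact that the complex exponential is a local biholomorphism onto $\Complex\setminus\{0\}$.

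The first step is a real-analytic reformulation. The function
\begin{equation*}
p(\theta) = a_0 + 2\sum_{j=1}^d a_j \cos(j\theta)
\end{equation*}
on $\Real$ is real-valued by the palindrome property and the reality of the coefficients. Suppose for contradiction that $P$ has no zero of odd order on the complex unit circle. By the identification above, every real zero of $p$ has even order. Since $p$ is real-analytic and nontrivial (the strict inequality forces $a_j \neq 0$, whence $P \not\equiv 0$), a standard induction across the connected components of $\Real\setminus p^{-1}(0)$ shows that $p$ cannot change sign at any of its zeros, so $p$ has constant sign on $\Real$. Replacing $P$ with $-P$ if necessary, I may assume $p(\theta) \geq 0$ for all $\theta \in \Real$.

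The second step is the perturbation. For $\epsilon > 0$, let $P_\epsilon(t) := P(t) + \epsilon$, a palindrome Laurent polynomial of the same degree $d$ with constant coefficient $a_0 + \epsilon$ and all higher coefficients $a_1, \ldots, a_d$ unchanged. Then $p_\epsilon(\theta) = p(\theta) + \epsilon \geq \epsilon > 0$ for every $\theta$, so $P_\epsilon$ has no zero on the complex unit circle. On the other hand, writing $c := \cos(\pi/(\lfloor d/j \rfloor + 2)) \in (0,1)$, the strict inequality $|a_j| > |a_0|\cdot c$ yields $|a_0 + \epsilon| \leq |a_0| + \epsilon < |a_j|/c$ for all sufficiently small $\epsilon > 0$; equivalently, $|a_j| \geq |a_0 + \epsilon|\cdot c$. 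Hence the hypothesis of Theorem \ref{criterion_KM} persists for $P_\epsilon$, which must then admit a zero on the complex unit circle, contradicting the strict positivity of $p_\epsilon$. There is no substantial obstacle beyond the elementary continuity estimate on the coefficients.
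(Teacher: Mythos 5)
Your proof is correct, and it reaches the contradiction with Theorem \ref{criterion_KM} by a genuinely simpler perturbation than the one in the paper. Both arguments have the same skeleton — assume every unit-circle zero has even order, perturb $P$ into a palindrome polynomial with no unit-circle zeros while using the strictness of the inequality to keep the Konvalina--Matache hypothesis — but the perturbations differ. The paper factors $P$ as a product of quadratic palindrome factors $(t+b+t^{-1})$ carrying the unit-circle zeros times a circle-zero-free part, and pushes the roots off the circle by replacing these factors with $(t+b+ht^{-1})(ht+b+t^{-1})$ (and $(t\pm2h+t^{-1})$ at $t=\mp1$), so all coefficients move continuously in $h$ and the strict inequality survives for $h$ near $1$. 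You instead restrict to the circle, note that $p(\theta)=P(e^{\theta\sqrt{-1}})$ is real-valued and real-analytic, that the vanishing orders match because $z\mapsto e^{z\sqrt{-1}}$ is a local biholomorphism, and that even-order zeros of a nontrivial real-analytic function force constant sign; then $P_\epsilon=P+\epsilon$ (after the harmless sign normalization $p\geq 0$) kills all circle zeros while changing only the constant coefficient, and the elementary estimate $|a_0+\epsilon|\leq|a_0|+\epsilon$ preserves the Konvalina--Matache inequality. Your version buys a shorter argument: no factorization over $\Real[t,t^{-1}]$, no case analysis at $t=\pm1$, and the coefficient control is trivial since only $a_0$ moves; the paper's root-moving perturbation is more hands-on but does not need the sign-constancy observation. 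All the small checks in your write-up go through: $a_j\neq 0$ forces $P\not\equiv 0$, the hypothesis and conclusion are invariant under $P\mapsto -P$, and the perturbed polynomial has the same $d$ and $j$, so Theorem \ref{criterion_KM} applies verbatim.
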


\begin{proof}
	Note that the condition forces $P\neq0$. 
	%If $a_d=0$, we may ignore the term $a_d\cdot(t^d+t^{-d})$,
	%and the inequality with $d$ replaced by $d-1$	implies the inequality with $d$.
	%Therefore, we assume $a_d\neq0$ without loss of generality.
	%In $\Complex[t,t^{-1}]$, we obtain a unique factorization of $P(t)$
	%as $a_d\cdot t^{-d}\cdot(t-\lambda_1)^{m_1}\cdots(t-\lambda_r)^{m_r}$,
	%where $\lambda_1,\ldots,\lambda_r$ denote the distinct complex zeros of $P(t)$,
	%with respective orders $m_1,\ldots,m_r$ of sum $d$.
	To argue by contradiction, suppose that 
	the zeros of $P(t)$	on the complex unit circle are all of even order.
	Then, we can perturb $P(t)$ into 
	a real Laurent polynomial	of palindromic form, 
	such that the perturbed polynomial has no zeros on the complex unit circle.
	This will lead to a contradiction to Theorem \ref{criterion_KM}.
	
	To be precise,
	we can factorize $P(t)$ uniquely as a product of the form
	$(t+2+t^{-1})^{m_-}\cdot (t-2+t^{-1})^{m_+}\cdot (t+b_1+t^{-1})^{2m_1} \cdot \cdots \cdot
	(t+b_k+t^{-1})^{2m_k}\cdot Q(t)$,
	where $m_{\pm},m_1,\ldots,m_k$ are integers in $\{0,1,2,\cdots\}$,
	and $b_1,\ldots,b_k$ are distinct real coefficients in $(-2,2)$,
	and $Q(t)$ is palindrome in $\Real[t,t^{-1}]$
	with no zeros on the complex unit circle.
	For any $h>1$, we replace the factors $(t\pm2+t^{-1})^{m_{\pm}}$
	with $(t\pm 2h+t^{-1})^{m_\pm}$,
	and replace the factors $(t-b_s+t^{-1})^{2m_s}$ with
	$(t+b_s+ht^{-1})^{m_s}\cdot(ht+b_s+t^{-1})^{m_s}$, for all $s=1,\ldots,k$.
	Then, for any $h>1$,
	the resulting real Laurent polynomial $P_h\in\Real[t,t^{-1}]$
	has no zeros on the complex unit circle.
	
	On the other hand, $P_h$ takes the palindromic form 
	$P_h(t)=a_0(h)+a_1(h)\cdot(t+t^{-1})+\cdots+a_d(h)\cdot(t^d+t^{-d})$,
	where the coefficients $a_j(h)$ 
	all depend continuously on $h$, satisfying $a_j(1)=a_j$.
	By assumption, the strict inequality
	$|a_j(h)|>|a_0(h)|\cdot\cos(\pi/(\lfloor d/j\rfloor+2))$
	holds for some $h>1$ sufficiently close to $1$ 
	and for some $j\in\{1,\ldots,d\}$.
	In this case, Theorem \ref{criterion_KM} implies 
	that $P_h(t)$ has some zero on the complex unit circle,
	which makes a contradiction.
\end{proof}

To prove Theorem \ref{criterion_odd_order},
suppose that $P\in\Integral[t,t^{-1}]$ is an integral Laurent polynomial
of palindrome form $P(t)=a_0+a_1\cdot(t+t^{-1})+\cdots+a_d\cdot(t^d+t^{-d})$,
such that $a_0$ is odd.
Suppose that the inequality $|a_j|\geq |a_0|\cdot\cos(\pi/(\lfloor d/j\rfloor +2))$
holds for some $j\in\{1,\cdots,d\}$.

Note that $\cos(\pi/n)$ is irrational for any integer $n\geq4$ (see \cite{Watkins--Zeitlin}).
If the integer $\lfloor d/j\rfloor +2$ is at least $4$,
the inequality $|a_j|\geq |a_0|\cdot\cos(\pi/(\lfloor d/j\rfloor +2))$
is automatically strict, since $a_0$ and $a_j$ are integers.
Otherwise, the integer $\lfloor d/j\rfloor +2$ can only be equal to $3$.
In this case,
the inequality becomes $|a_j|\geq |a_0|/2$, or equivalently, $2\cdot|a_j|\geq |a_0|$.
Since $a_0$ and $a_j$ are integers, and since $a_0$ is odd,
the inequality is again strict.

Therefore, our assumptions imply a strict inequality
$|a_j|> |a_0|\cdot\cos(\pi/(\lfloor d/j\rfloor +2))$.
Applying Lemma \ref{criterion_KM_strict},
we infer that $P(t)$ has some zero of odd order on the complex unit circle.

This completes the proof of Theorem \ref{criterion_odd_order}.

\section{Further discussion}\label{Sec-discussion}
We briefly discuss a technical point regarding 
the conclusion of Theorem \ref{deform_character}.

In view of Conjecture \ref{LSK_surgery_conjecture},
it would be desirable 
to ask for an asserted path of representations $\rho_s\colon\pi_1(S^3\setminus K)\to \mathrm{SL}(2,\Real)$
as in Corollary \ref{deform_representation},
such that the path $\rho_s$ is 
not constantly trivial evaluated at the longitude of $K$.
When $K$ is a fibered knot, such that the only essential surfaces in the knot complement
are the surface fibers,
a criterion due to Culler and Dunfield
implies that any path $\rho_s$ as asserted in Corollary \ref{deform_representation}
satisfies the additional property as above \cite[Theorem 1.2 and Claim 7.2]{Culler--Dunfield}.
In general, 
the topological condition in Culler and Dunfield's criterion is very restrictive.
See Cremaschi and Yarmola \cite{Cremaschi--Yarmola} for a family of examples 
among two-bridge knots.

Our argument for Theorem \ref{deform_character}
tells little about the trace of $\rho_s$ evaluated at any boundary slope.
In the worst scenario, potentially,
every path $\chi_s$ with the asserted property as in Theorem \ref{deform_character}
could be constant restricted to the peripheral torus subgroup.
Such worrisome phenomena 
have not been observed in any known knot group though.

We pose the following open questions.

\begin{question}\label{ZHS_knot_question}
	For any knot in an integral homology $3$--sphere,
	instead of in $S^3$,
	does the similar assertion as Theorem \ref{deform_character} hold as well?
\end{question}

\begin{question}\label{signature_jump_question}
	For any knot $K\subset S^3$, 
	if $w^2$ is a zero of $\Delta_K$ on the complex unit circle,
	such that the values of $\mathrm{sgn}_K$ near $w^2$ 
	are not equal on the different sides of $w^2$,
	then,	is the abelian $\mathrm{SU}(1,1)$--representation $\rho_w$ 
	as in (\ref{ab_GL1C}) the limit of some continuous family
	of irreducible $\mathrm{SU}(1,1)$--representations?
\end{question}

\begin{question}\label{longitude_rigidity_question}
	Under the assumption of Theorem \ref{deform_character},
	does there exist some continuous path of $\mathrm{SL}(2,\Real)$--characters $\chi_s$ as asserted therein,
	such that the value of $\chi_s$ at the longitude of $K$ is (strictly) less than $2$ for all $s\in(0,1]$?	
\end{question}

See the facts in Theorem \ref{deform_nonabelian_facts}
and the comments in Remark \ref{deform_nonabelian_facts_remark}
for comparison with the above questions.

\bibliographystyle{amsalpha}

\end{document}